\newcommand{\R}{\mathbb{R}}
\newcommand{\bfR}{\mathbb{R}}
\newcommand{\N}{\mathbb{N}}
\newcommand{\mcA}{\mathcal{A}}
\newcommand{\mcM}{\mathcal{M}}
\newcommand{\mcS}{\mathcal{S}}
\newcommand{\mcD}{\mathcal{D}}
\newcommand{\mcG}{\mathcal{G}}
\newcommand{\mcV}{\mathcal{V}^\alpha}
\newcommand{\bS}{\mcS^\alpha}
\newcommand{\infnorm}[1]{\left\| #1 \right\|_{\infty}} %(Grace added)
\newcommand{\rndP}[1]{\left(#1 \right)} %(Grace added)
\newcommand{\sqrP}[1]{\left[#1 \right]} %(Grace added)
\newtheorem{theorem}{Theorem}[section]
\newtheorem*{theorem*}{Theorem}
\newtheorem{lemma}[theorem]{Lemma}
\theoremstyle{definition}
\newtheorem{remark}{Remark}[section]
\title{Arbitrary Polynomial Decay Rates of Neutral, Collisionless Plasmas}
\author{Grace Mattingly, Stephen Pankavich}
\address{Department of Applied Mathematics and Statistics, Colorado School of Mines, Golden, CO 80401.}
\email{gmattingly@mines.edu}
\email{pankavic@mines.edu}
\author{Jonathan Ben-Artzi}
\address{School of Mathematics, Cardiff University, Cardiff, Wales, UK.}
\email{ben-artzij@cardiff.ac.uk}
\date{\today}
\thanks{The first author was supported by an NSF Graduate Research Fellowship. The second author was supported in part by NSF grants DMS-1911145 and DMS-2107938.}
\begin{document}

%TO DO:
%Look at Section 4

\maketitle

\begin{abstract}
A multispecies, collisionless plasma is modeled by the Vlasov-Poisson system. 
Assuming the plasma is neutral and the electric field decays with sufficient rapidity as $t \to\infty$, we show that solutions can be constructed with arbitrarily fast, polynomial rates of decay, depending upon the properties of the limiting spatial average and its derivatives.
In doing so, we establish, for the first time, a countably infinite number of asymptotic profiles for the charge density, electric field, and their derivatives, each of which is necessarily realized by a sufficiently smooth solution and exceeds the established dispersive decay rates.
Finally, in each case we establish a linear $L^\infty$ scattering result for every particle distribution function, namely we show that they converge as $t \to \infty$ along the transported spatial characteristics at increasingly faster rates.
\end{abstract}

\section{Introduction}
\label{sec:intro}
We consider a plasma comprised of a large number of charged particles. If there are $N\in\N$ distinct species of particles within the plasma, then those of the $\alpha$th species (for $\alpha = 1, ..., N$)
have charge $q_\alpha \in \mathbb{R}$, mass $m_\alpha > 0$, and are distributed in phase space at time $t \geq 0$ according to the function $f^\alpha(t,x,v)$ where $x \in \bfR^3$ represents particle position and $v\in \bfR^3$ particle velocity. 
Assuming that electrostatic forces dominate collisional effects, the time evolution of the plasma is described by the multispecies Vlasov-Poisson system
\begin{equation}
\tag{VP}
\label{VP}
\left \{ \begin{aligned}
& \partial_{t}f^\alpha+v\cdot\nabla_{x}f^\alpha+ \frac{q_\alpha}{m_\alpha} E \cdot\nabla_{v}f^\alpha=0, \qquad \alpha = 1, ..., N,\\
& \rho(t,x)= \sum_{\alpha=1}^N q_\alpha \int_{\mathbb{R}^3} f^\alpha(t,x,v) \,dv\\
& E(t,x) = \nabla_x ( \Delta_x)^{-1} \rho(t,x) = \frac{1}{4\pi}\int_{\mathbb{R}^3} \frac{x-y}{\vert x - y \vert^3} \rho(t,y) \ dy
\end{aligned} \right .
\end{equation}
with prescribed initial conditions $f^\alpha(0,x,v) = f^\alpha_0(x,v)\geq0$ for each $\alpha = 1, ..., N$.
Here, $E(t,x)$ represents the electric field induced by the charged particles, and $\rho(t,x)$ is the associated density of charge within the plasma. A quantity that will play a significant role throughout this paper is the total net charge
	\begin{equation}
	\label{eq:M}
	\mcM
	:=
	\int\rho(0,x)\ dx = \sum_{\alpha = 1}^N \iint q_\alpha f_0^\alpha(x,v) \ dv dx
	\end{equation}
which, in fact, is preserved at later times $t>0$.

%Such global existence results often depend upon either the propagation of (spatial, velocity, or transported) moments or precise estimates for the growth of the characteristics associated to \eqref{VP}, which are defined by
%\begin{equation*}
%\label{char}
%\left \{
%\begin{aligned}
%&\dot{\mcX}(t, \tau, x, v)=\mcV(t, \tau, x, v)\\
%&\dot{\mcV}(t, \tau, x, v)= \frac{q_\alpha}{m_\alpha} E(t, \mcX(t, \tau, x, v))
%\end{aligned}
%\right.
%\end{equation*}
%with initial conditions
%$\mcX(\tau, \tau, x, v) = x$ and
%$\mcV(\tau, \tau, x, v) = v$
%for $\alpha = 1, ..., N$.

It is well-known that given smooth initial data with either compact support in phase space or finite moments, \eqref{VP} possesses a  global-in-time smooth solution \cite{LP, Pfaf, Schaeffer}.
Though well-posedness has been intensely studied, the large time behavior of solutions to \eqref{VP} has only recently become better understood. Partial results for the Cauchy problem are known in some special cases, including small data \cite{BD, CK, HRV, Ionescu, Smulevici}, monocharged and spherically-symmetric data \cite{Horst, Pankavich2020}, and lower-dimensional settings \cite{BKR, BMP, GPS, GPS2, GPS4, Sch}.
Additionally, an understanding of the intermediate asymptotic behavior was obtained in \cite{BCP1}, namely that there are solutions for which the $L^\infty$ norms of the charge density and electric field can be made arbitrarily large at some later, finite time regardless of their initial size.
For general background concerning \eqref{VP} and associated kinetic equations, we refer the reader to \cite{Glassey, Rein}. 

Due to the dispersive properties imparted upon the system by the transport operator $\partial_t + v \cdot \nabla_x$ and the velocity averaging inherent to the observables, one generally expects that the charge density and electric field decay to zero like $t^{-3}$ and $t^{-2}$, respectively, as $t \to \infty$ for all smooth solutions of \eqref{VP}. 
In fact, both small data and spherically-symmetric solutions have been shown to exhibit exactly this behavior, and it is known, at least in lower dimensions, that the Cauchy problem does not possess smooth steady states \cite{GPS5}.
In the periodic case ($x \in \mathbb{T}^d$), phase mixing can further drive these quantities to decay exponentially fast to zero, depending upon the regularity of the solution, by creating filamentation through a process known as Landau damping \cite{VM}, and this has been shown for perturbations of a neutral plasma around a spatially-homogeneous equilibrium on the torus.
In contrast, the results concerning Landau damping for \eqref{VP} with $x \in \bfR^3$ remain somewhat limited \cite{Ionescu2} and do not prove decay of the maximal electric field faster than $t^{-2}$.
Indeed, as shown in \cite[Theorem 1.1] {Pankavich2022}, non-neutral plasmas ($\mcM \neq 0$) cannot experience decay that is faster than the rates provided by the dispersive mechanisms in the system.
Though, via Theorem \ref{oldT2}, this same paper determined that faster rates can occur in a neutral plasma  ($\mcM = 0$) should the limiting charge density vanish.

The current paper significantly extends this idea to demonstrate that solutions attain decay rates of \emph{any} polynomial order greater than those attributed to the dispersive properties of the system.
Namely, we show for the first time that neutral plasmas may attain decay rates of any integer order between the dispersive regime ($E \sim t^{-2}$, $\rho \sim t^{-3}$) and the phase mixing rates ($E, \rho \sim e^{-\gamma t}, \gamma > 0$),
which closes the gap between the behavior attributed to solutions in the whole space in comparison to those of the periodic or screened \cite{BMM} system.
In this way, we demonstrate that the system allows for infinitely many different regimes of asymptotic behavior, though the scattering behavior of the distribution function can only assume two distinct states (linear and modified).

Of course, the dynamical behavior of the system depends intrinsically upon establishing decay of the electric field. 
To date, the best \emph{a priori} rate known \cite{Yang} for the electric field stemming from arbitrary solutions of \eqref{VP} is
$$\| E(t) \|_\infty \leq C(1+t)^{-1/6},$$
and this is derived from precise estimates of the growth of the maximal velocity on the support of $f(t)$, from which the estimate of $\|E(t)\|_\infty$ is deduced. 
Unfortunately, this resulting estimate is far from what is believed to be the optimal decay rate.
Additionally, while maximal velocity support estimates have been beneficial to improving the field decay rate \cite{Chen2, Pallardsupport, Jacksupport}, even a sharp estimate (i.e, a uniform bound) on the support cannot allow one to conclude a sharp decay rate of the field.
So, it appears that we are quite far from obtaining precise \emph{a priori} estimates of the field.
%
%Still, even when the field decays at a fast rate, the dynamics of the remaining quantities in \eqref{VP} are not well-understood.
Therefore, the goal of the current work is to establish the precise large-time dynamical behavior of solutions to \eqref{VP} whenever the electric field is known to decay with sufficient rapidity and demonstrate the wide variety of asymptotic behavior displayed by solutions.  \\

\textbf{Organization of the paper.}
Section \ref{sec:intro} is dedicated to the introduction of the problem and the statement of our main results, as well as an overview of previous results and the strategy of the proofs.
In Section \ref{Lemmas}, we prove some preliminary lemmas that provide asymptotic bounds (as $t\to\infty$) on the charge density $\rho(t,x)$ and electric field $E(t,x)$, as well as their derivatives.
Then, these bounds are used to prove the two main theorems  in Section \ref{sec:proofs}.
A rather long argument establishing asymptotic bounds for derivatives of $g^\alpha$ is postponed until  Section \ref{sec:bounds-derivs-g} in order to facilitate the exposition.

\subsection{Overview}

Due to the global existence theorem, all quantities of interest are bounded for finite time; thus, we are only concerned with large time estimates.
Hence, we use the notation
$$A(t) \lesssim B(t)$$
to represent the statement that there is a constant $C > 0$, independent of $t$, 
such that
$A(t) \leq CB(t)$
for all $t$ sufficiently large.
Furthermore, the notation 
$$A(t) \sim B(t)$$
indicates
$$A(t) \lesssim B(t) \qquad \mathrm{and} \qquad B(t) \lesssim A(t).$$
%When necessary, $C$ will denote a positive constant (independent of the solution) that may change from line to line.

\textbf{Field decay assumption.} We assume that the dispersive effects or other physical phenomena, such as charge cancellation, in the system induce a strong decay of the electric field, namely that 
\begin{equation}
\label{Assumption}
\tag{A}
\exists\ p>\frac53 \text{ such that:}\quad\|E(t) \|_\infty \lesssim t^{-p}.
\end{equation}
We note that this assumption is known to be satisfied for monocharged, spherically-symmetric initial data \cite{Horst, Pankavich2020} and for all previously constructed perturbative solutions (e.g., \cite{BD, Ionescu}) regardless of the number or type of species involved.\\

Though we will assume compactly-supported initial data, this may not be necessary as velocity, spatial, and transported moments \cite{Castella, Chen2, LP, Pallardspatial, Pallardsupport} have all been used in lieu of this assumption to develop the existence theory.
Due to the Taylor series expansion we will employ later, the use of transported moments would be very natural in the context of understanding the large time behavior.
In addition, the regularity assumptions on initial data may be weakened to arrive at similar convergence results in weaker topologies (see \cite{Ionescu}).
The novelty herein is that solutions are obtained with faster field and charge density decay rates than previously exhibited in the whole space, and the precise asymptotic profile of the electric field, its derivatives, and the charge density are obtained for each associated solution.  
Additionally, our methods display the crucial dependence on the spatial support and velocity derivatives of the distribution function when translated along free-streaming spatial characteristics.
Finally, our results apply directly to general conditions that may be satisfied by any neutral plasma and not merely to small data solutions.

\subsection{Review of Previous Results}
To facilitate the presentation of our novel theorems, we first quote some  recent results which serve as a starting point for our findings.
\subsubsection{Modified Scattering}
Under the assumption \eqref{Assumption}, we first summarize the previous results of \cite{Pankavich2022} that the scaled charge density and field converge to limits based upon the limiting spatial average, namely
\begin{theorem}[\cite{{Pankavich2022}}]
\label{oldT1}
Consider a solution $f^\alpha\in C^2((0,\infty)\times\R^6)$ of \eqref{VP} with initial data $f^\alpha_0 \in C_c^2(\bfR^6)$. Assume that \eqref{Assumption} holds. Then, we have the following:
\begin{enumerate}[(a)]
%\item
% For every $\alpha = 1, ..., N$, $\tau \geq 0$ and $(x,v) \in \mcU$ the limiting function $\mcV_\infty$ defined by
%$$\mcV_\infty(\tau, x, v) :=  \lim_{t \to \infty} \mcV(t, \tau, x, v)$$ 
%exists and is $C^2$ and bounded.
%Additionally, for $\tau \geq 0$ and $(x,v) \in \mcU$,
%$$\left | \mcV(t, \tau, x, v) - \mcV_\infty(\tau, x, v)  \right | \lesssim t^{-1}.$$

\item 
%For every $\alpha = 1, ..., N$ define
%$$\Omv = \left \{ \mcV_\infty(0, x, v) : (x, v) \in \bS_f(0) \right \}.$$
%Then, 
For each $\alpha = 1, ..., N$ there exists $F^{\alpha,0}_\infty \in C_c^1(\bfR^3)$ 
such that the spatial average
$$F^{\alpha,0}(t,v) = \int f^\alpha(t,x, v) \ dx$$
satisfies $F^{\alpha,0}(t,v) \to F^{\alpha,0}_\infty(v)$ uniformly as $t \to \infty$ with
$$\| F^{\alpha,0}(t) - F^{\alpha,0}_\infty \|_\infty \lesssim t^{-1}\ln^{4}(t).$$
Moreover, the net density
$$\rho_{0}(t,x) = \sum_{\alpha=1}^N  q_\alpha F^{\alpha,0}\left (t,\frac{x}{t} \right)$$
converges at the same rate 
to
$$\rho_{0,\infty}\left (\frac{x}{t} \right ) = \sum_{\alpha=1}^N  q_\alpha F^{\alpha,0}_\infty\left (\frac{x}{t} \right ) ,$$
which satisfies
\begin{equation}
\label{Pinfmass}
\int \rho_{0,\infty}(v) \ dv = \mcM,
\end{equation}
where $\mcM$ is given by \eqref{eq:M}.

\item Define $E_{0,\infty}(v) = \nabla_v(\Delta_v)^{-1} \rho_{0,\infty}(v)$. Then, we have the self-similar asymptotic profiles
\begin{align*}
\sup_{x\in \bfR^3} \left | t^2 E(t,x) - E_{0,\infty} \left(\frac{x}{t} \right) \right | & \lesssim t^{-1}\ln^{4}(t),\\
\sup_{x \in \bfR^3}  \left | t^3 \nabla_xE(t,x) - \nabla_v E_{0,\infty} \left(\frac{x}{t} \right) \right | & \lesssim t^{-1} \ln^{6}(t),\\
\sup_{x \in \bfR^3}   \left | t^3 \rho(t,x) - \rho_{0,\infty} \left(\frac{x}{t} \right) \right | & \lesssim t^{-1} \ln^{5}(t).
\end{align*}

\item
For each $\alpha = 1, ..., N$ there is $f^\alpha_\infty \in C_c^1(\bfR^6)$ 
such that
$$f^\alpha \left(t,x +vt - \frac{q_\alpha}{m_\alpha} \ln(t)E_{0,\infty}(v),v \right) \to f^\alpha_\infty(x,v)$$
uniformly
as $t \to \infty$, namely we have the convergence estimate
$$\sup_{(x,v) \in \bfR^6} \left | f^\alpha \left(t,x +vt - \frac{q_\alpha}{m_\alpha} \ln(t)E_{0,\infty}(v), v \right) - f^\alpha_\infty(x,v) \right |  \lesssim t^{-1}\ln^{4}(t).$$
Furthermore, for all $\alpha = 1, ..., N$ we have
$$ F^{\alpha,0}_\infty(v) = \int f^\alpha_\infty(x,v) \ dx.$$
\end{enumerate}
\end{theorem}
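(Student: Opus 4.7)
The plan is to absorb the free-streaming motion by introducing the transported density $g^\alpha(t,x,v) := f^\alpha(t, x + vt, v)$. A chain-rule computation turns the Vlasov equation into
$$\partial_t g^\alpha = -\frac{q_\alpha}{m_\alpha}\, E(t, x+vt) \cdot \bigl(\nabla_v g^\alpha - t\,\nabla_x g^\alpha\bigr),$$
so the evolution of $g^\alpha$ is forced entirely by the field. Under \eqref{Assumption}, $\|E(t)\|_\infty \lesssim t^{-p}$ with $p > 5/3 > 1$, so provided the derivatives of $g^\alpha$ grow at most polylogarithmically, the right-hand side is integrable in $t$ and $g^\alpha(t)$ converges in $L^\infty$ to a limit $g^\alpha_\infty$. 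As a preparatory step I would show, by differentiating the equation above and running a Grönwall argument, that $\|\nabla_v^j \nabla_x^k g^\alpha(t)\|_\infty \lesssim \ln^{j+k}(t)$ for $j+k$ small; the dangerous factor $t\,\nabla_x g^\alpha$ is tamed because the $x$-support of $g^\alpha(t,\cdot,v)$ is bounded uniformly in $t$ (particles at fixed $v$ originate from a bounded set) and because $E$ enjoys decay strictly better than $t^{-1}$.

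With such bounds in hand, part (a) becomes a Cauchy argument: $\|g^\alpha(t) - g^\alpha(\tau)\|_\infty \lesssim \int_\tau^t s^{-p}\ln^k(s)\, ds$, which, after optimizing over the permitted values of $p$ and $k$, delivers the stated rate $t^{-1}\ln^4(t)$. The spatial average unfolds via $y = x - vt$ as $F^{\alpha,0}(t,v) = \int g^\alpha(t,y,v)\, dy$, so it inherits the same rate with limit $F^{\alpha,0}_\infty(v) := \int g^\alpha_\infty(y,v)\, dy$, and conservation of charge yields \eqref{Pinfmass}. For the scaled density in (b), changing variables $u = x - vt$ in the velocity integral gives
$$\rho(t,x) = \sum_\alpha q_\alpha\, t^{-3} \int g^\alpha\!\left(t, u,\; \tfrac{x-u}{t}\right) du,$$
and Taylor-expanding the third argument of $g^\alpha$ about $x/t$, controlled by $\|\nabla_v g^\alpha\|_\infty \lesssim 1$, produces the asymptotic $t^3\rho(t,x) \to \rho_{0,\infty}(x/t)$ with the claimed error. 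The $E$- and $\nabla_x E$-profiles then follow by convolving the rescaled error against $\nabla_x(\Delta_x)^{-1}$; the logarithmic factors $\ln^5(t)$ and $\ln^6(t)$ reflect standard log-singularity losses when Riesz potentials are applied to compactly supported error densities with controlled derivatives.

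For the modified scattering in (c), the characteristic ODE $\dot V^\alpha(s) = (q_\alpha/m_\alpha)\, E(s, X^\alpha(s))$ combined with the self-similar asymptotic from (b) and $X^\alpha(s)/s \to v$ yields
$$\dot V^\alpha(s) = \frac{q_\alpha}{m_\alpha}\, s^{-2}\, E_{0,\infty}(v) + O\bigl(s^{-3}\ln^{4}(s)\bigr),$$
whose time integration produces precisely the logarithmic position drift $(q_\alpha/m_\alpha)\ln(t)\, E_{0,\infty}(v)$. Evaluating $f^\alpha$ along the modified trajectory $(x + vt - (q_\alpha/m_\alpha)\ln(t)\, E_{0,\infty}(v),\, v)$ and differentiating in $t$ gives a drift of size $O(t^{-2}\ln^{4}(t))$, which integrates to the advertised rate and defines $f^\alpha_\infty$. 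The identity $\int f^\alpha_\infty(x,v)\,dx = F^{\alpha,0}_\infty(v)$ is automatic because the log shift depends only on $v$ and vanishes under the spatial integral.

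The principal obstacle is the derivative bootstrap underlying everything above. Differentiating the $g^\alpha$-equation in $x$ or $v$ repeatedly produces terms of the form $t\,E(t) \cdot \nabla^2 g^\alpha$ and $\nabla E(t) \cdot \nabla g^\alpha$, each of which couples the derivative of interest to higher-order derivatives and defeats naive Grönwall estimates. Closing the loop requires proceeding hierarchically, bounding $\nabla_v g^\alpha$ first, then mixed derivatives, then higher $\nabla_x$-derivatives, carefully tracking how each additional derivative costs one factor of $\ln(t)$. This bookkeeping is what produces the different logarithmic exponents $4$, $5$, $6$ visible in the theorem, and it is the technical heart of the argument.
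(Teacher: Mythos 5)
This statement is a quotation of a theorem from \cite{Pankavich2022}; the present paper does not reprove it, so there is no in-paper proof to compare against line by line. Your outline nonetheless captures the general spirit of the cited argument --- transport to the co-moving frame $g^\alpha(t,x,v)=f^\alpha(t,x+vt,v)$, control of $x$- and $v$-derivatives of $g^\alpha$, and characteristic analysis for the modified scattering --- but it has a genuine gap at the very first step.

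You claim that under assumption \eqref{Assumption} alone, with $\|E(t)\|_\infty\lesssim t^{-p}$ for some $p>5/3$, the right-hand side of the equation for $\partial_t g^\alpha$ is integrable in time. It is not: the dangerous term scales like $t\,\|E(t)\|_\infty\,\|\nabla_x g^\alpha\|_\infty\sim t^{1-p}$, and for $5/3<p\leq 2$ the exponent $1-p$ lies in $(-1,-2/3]$, so $\int^\infty t^{1-p}\,dt$ diverges. Relatedly, your claim that the $x$-support of $g^\alpha(t,\cdot,v)$ is bounded uniformly in $t$ ``because particles at fixed $v$ originate from a bounded set'' is false under \eqref{Assumption} alone: the transported spatial characteristics spread at a rate controlled by $\int_1^t s\|E(s)\|_\infty\,ds$ (cf.\ Lemma \ref{Sg}), which grows like $t^{2-p}$ for $p<2$ and like $\ln t$ even after the decay is improved to $t^{-2}$. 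The missing ingredient is the bootstrap step --- alluded to in the paper's remark citing \cite[Lemma 2.2]{Pankavich2022} --- which uses the spread of the characteristics together with the velocity-averaging structure of $\rho$ to upgrade $\|E(t)\|_\infty\lesssim t^{-p}$ to $\|E(t)\|_\infty\lesssim t^{-2}$ and $\|\rho(t)\|_\infty\lesssim t^{-3}$, after which the spatial support of $g^\alpha$ grows only like $\ln^3 t$ and derivatives grow polylogarithmically. It is this residual logarithmic growth of the support and the derivatives of $g^\alpha$ that produces the $\ln^4$, $\ln^5$, $\ln^6$ factors; attributing them chiefly to Riesz-potential singularities misidentifies their source (only a single extra $\ln$ in the $\nabla_x E$ estimate has that origin). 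Without the bootstrap and the correct accounting of support growth, the Cauchy argument in part (a), the Taylor expansion in part (b), and the drift computation in part (c) cannot be closed as written.
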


If the plasma is non-neutral, i.e. $\mcM \neq 0$, then $\rho_{0,\infty} \not\equiv 0$ due to \eqref{Pinfmass} and these estimates are sharp, up to a correction in the logarithmic powers of the error terms.
Hence, $\| \rho(t) \|_\infty, \| \nabla_x E(t) \|_\infty \sim \mathcal{O}\left(t^{-3} \right)$ and $\| E(t) \|_\infty \sim \mathcal{O}\left(t^{-2} \right)$.
However, when the plasma is neutral, i.e. $\mcM = 0$, it is possible that the limiting density $\rho_{0,\infty}$ (and hence, the limiting field $E_{0,\infty}$) is identically zero, which implies stronger decay of $\| \rho(t) \|_\infty$, $\| E(t) \|_\infty$, $\| \nabla_x E(t) \|_\infty$, and related quantities.
Indeed, as we will show these quantities can decay at any polynomial rate greater than the above powers, depending upon the behavior of the limiting charge density. Note that a nontrivial neutral plasma is necessarily comprised of more than one species of charged particles, so that $N\geq2$ in what follows.

\begin{theorem}[\cite{{Pankavich2022}}]
\label{oldT2}
Under the assumptions of Theorem \ref{oldT1}, if additionally $\mcM = 0$ and $\rho_{0,\infty} \equiv 0$, then the asymptotic behavior described above is altered in the following manner:
\begin{enumerate}[(a)]
%\item 
%For any $\alpha = 1, ..., N$, $\tau \geq 0$ and $(x,v) \in \mcU$, we have
%$$\left | \mcV(t, \tau, x, v) - \mcV_\infty(\tau, x, v)  \right | \lesssim t^{-2}.$$

\item We have the improved estimates
\begin{align*}
\| F^{\alpha,0}(t) - F^{\alpha,0}_\infty \|_\infty & \lesssim t^{-2}, \quad \alpha = 1, ..., N, &
%\| \mcP(t) \|_\infty & \lesssim t^{-2}, \\
\| E(t) \|_\infty & \lesssim t^{-3},\\
 %\mu(t) & \lesssim 1, \\
\| \nabla_xE(t) \|_\infty & \lesssim t^{-4} \ln(t), &
\| \rho(t) \|_\infty & \lesssim t^{-4}, &  
%\| j(t) \|_\infty & \lesssim t^{-4}.
\mcG_v^1(t) +  \mcG_{x,v}^1(t) & \lesssim 1.
\end{align*}

%\item For any $\alpha = 1, ..., N$, the spatial characteristics
%$$\mcY(t,\tau, x, v) = \mcX(t, \tau, x, v) - t \mcV(t, \tau, x, v),$$ 
%converge as $t \to \infty$, and the limiting functions 
%$$\mcY_\infty(\tau, x, v) :=  \lim_{t \to \infty} \mcY(t, \tau, x, v)$$
%are $C^2$ and bounded.
%Additionally, for any $\tau \geq 0$ and $(x,v) \in \mcU$,\
%$$|\mcY(t, \tau, x, v) - \mcY_\infty(\tau, x, v) | \lesssim t^{-1}.$$

\item
%For any $\alpha = 1, ..., N$ define
%$$\Omy = \left \{ \mcY_\infty(0, x, v) : (x, v) \in \bS_f(0) \right \}$$
%and
%$\Om^\alpha = \Omy \times \Omv$.
%
The distribution functions scatter linearly, namely
for each $\alpha = 1, ..., N$ there is $f^\alpha_\infty \in C_c^1(\bfR^6)$  such that
$$f^\alpha(t,x +vt,v) \to f^\alpha_\infty(x,v)$$
uniformly
as $t \to \infty$ with the convergence estimate
$$\sup_{(x,v) \in \bfR^6} \left | f^\alpha(t,x +vt, v) - f^\alpha_\infty(x,v) \right |  \lesssim t^{-1}.$$
\end{enumerate}
\end{theorem}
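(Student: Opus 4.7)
The plan is to exploit the fact that Theorem \ref{oldT1} already furnishes the expansions
$\rho(t,x) = t^{-3}\rho_{0,\infty}(x/t) + O(t^{-4}\ln^5 t)$ and $E(t,x) = t^{-2}E_{0,\infty}(x/t) + O(t^{-3}\ln^4 t)$, together with $\| F^{\alpha,0}(t) - F^{\alpha,0}_\infty\|_\infty \lesssim t^{-1}\ln^4(t)$. Under the additional hypothesis $\mathcal{M}=0$ and $\rho_{0,\infty}\equiv 0$, we have $E_{0,\infty}\equiv 0$ identically, so the leading self-similar terms vanish and the remainder estimates give immediate (but not yet sharp) improvements $\|\rho(t)\|_\infty \lesssim t^{-4}\ln^5(t)$, $\|E(t)\|_\infty \lesssim t^{-3}\ln^4(t)$.

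The first step will be a bootstrap that removes the logarithmic corrections. Tracing the origin of the $\ln$ factors in the proof of Theorem \ref{oldT1}, they arise from integrals of the form $\int_1^t s^{-1}\,ds$ produced by the displacement correction $\frac{q_\alpha}{m_\alpha}\ln(t) E_{0,\infty}(v)$ used in the modified scattering. Since this correction vanishes here, the same arguments, redone using \emph{free-streaming} characteristics $X(\tau)=x+v\tau$ in place of the modified ones, and substituting the stronger input $\|E(\tau)\|_\infty\lesssim\tau^{-3}\ln^4\tau$ (rather than assumption \eqref{Assumption}), give $\|F^{\alpha,0}(t)-F^{\alpha,0}_\infty\|_\infty \lesssim t^{-2}$. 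Inserting this back into the representation $\rho(t,x) = t^{-3}\sum_\alpha q_\alpha F^{\alpha,0}(t,x/t) + $ (transport corrections), together with $E_{0,\infty}\equiv 0$, yields $\|\rho(t)\|_\infty \lesssim t^{-4}$ and $\|E(t)\|_\infty \lesssim t^{-3}$ without log factors.

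For the derivative estimates, I would differentiate the Vlasov equation and integrate along characteristics. The quantities $\nabla_v f^\alpha$ and $\nabla_x\nabla_v f^\alpha$ (i.e. $\mathcal{G}^1_v$ and $\mathcal{G}^1_{x,v}$) satisfy Gronwall-type inequalities whose forcing involves $\nabla_x E$ integrated in time. The estimate $\|\nabla_x E(t)\|_\infty \lesssim t^{-4}\ln t$ (inherited from Theorem \ref{oldT1} once the leading profile $\nabla_v E_{0,\infty}/t^3$ vanishes) is integrable at infinity, so Gronwall produces the uniform-in-time bound $\mathcal{G}^1_v(t) + \mathcal{G}^1_{x,v}(t) \lesssim 1$.

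Finally, for part (b), integrating the Vlasov equation along the free-streaming trajectory gives
\begin{equation*}
f^\alpha(t,x+vt,v) - f^\alpha(s,x+vs,v) = -\frac{q_\alpha}{m_\alpha}\int_s^t E(\tau, x+v\tau)\cdot\nabla_v f^\alpha(\tau,x+v\tau,v)\,d\tau.
\end{equation*}
By the sharpened bounds just obtained, $\|E(\tau)\|_\infty\|\nabla_v f^\alpha\|_\infty \lesssim \tau^{-3}$, so the right-hand side is the tail of an absolutely convergent integral: $f^\alpha(t,x+vt,v)$ is $L^\infty$-Cauchy with the explicit rate $\int_t^\infty \tau^{-3}\,d\tau \lesssim t^{-2}$, which is better than the $t^{-1}$ asserted; the stated $t^{-1}$ rate is already implied. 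The main obstacle I anticipate is not the scattering step, which is soft, but the careful bookkeeping needed to remove the logarithmic factors from the bounds of Theorem \ref{oldT1}, which requires redoing the Taylor expansion of the characteristics to one higher order and exploiting the cancellations produced by $\rho_{0,\infty}\equiv 0$ at each appearance of a potential log divergence.
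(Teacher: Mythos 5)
The theorem you are proving is quoted from \cite{Pankavich2022}; the present paper does not reprove it, so your proposal is being evaluated on its own merits.

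Your overall strategy --- feed $E_{0,\infty}\equiv 0$ back into the conclusions of Theorem \ref{oldT1}, bootstrap away the logarithmic losses, then Gronwall for $\mcG^1_v,\mcG^1_{x,v}$ and integrate along free streaming for part (b) --- is the right shape. But there are two points where the argument as written goes wrong.

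First, the scattering step contains an error that leads you to assert a false improvement. You claim $\|E(\tau)\|_\infty\|\nabla_v f^\alpha(\tau)\|_\infty\lesssim\tau^{-3}$, and hence a rate $t^{-2}$ "better than the $t^{-1}$ asserted." This implicitly treats $\|\nabla_v f^\alpha(t)\|_\infty$ as uniformly bounded, but that quantity grows linearly in $t$. Indeed, with $g^\alpha(t,x,v)=f^\alpha(t,x+vt,v)$ one has
$(\nabla_v f^\alpha)(t,x+vt,v)=\nabla_v g^\alpha(t,x,v)-t\,\nabla_x g^\alpha(t,x,v)$,
so even the uniform bound $\mcG^1_v(t)+\mcG^1_{x,v}(t)\lesssim 1$ only gives $\|\nabla_v f^\alpha(t)\|_\infty\lesssim t$. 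The integrand in your Duhamel formula is thus $O(\tau^{-3}\cdot\tau)=O(\tau^{-2})$, and integrating from $t$ to $\infty$ produces exactly the stated rate $t^{-1}$, not $t^{-2}$. (Equivalently, using \eqref{VPg} directly: $\|\partial_t g^\alpha(t)\|_\infty\lesssim t\|E(t)\|_\infty\mcG^1_{x,v}(t)+\|E(t)\|_\infty\mcG^1_v(t)\lesssim t^{-2}$, which integrates to $t^{-1}$.) A claimed rate better than the one you set out to prove should always be a warning sign; here it flags the confusion between $\nabla_v f^\alpha$ and $\nabla_v g^\alpha$.

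Second, the bootstrap step is not self-closing as described, and the diagnosis that the log powers in Theorem \ref{oldT1} all stem from the displacement correction $\frac{q_\alpha}{m_\alpha}\ln(t)E_{0,\infty}(v)$ is too optimistic. In fact, Theorem \ref{oldT1}(b) with $E_{0,\infty}\equiv 0$ hands you $\|\nabla_x E(t)\|_\infty\lesssim t^{-4}\ln^6(t)$, whereas the theorem claims $t^{-4}\ln(t)$; that improvement does not fall out simply by observing the leading term vanishes. Moreover the quantities $F^{\alpha,0}(t)-F^{\alpha,0}_\infty$, $\rho$, $E$, $\nabla_x E$, and $\mcG^1_v,\mcG^1_{x,v}$ are mutually coupled: the $F^{\alpha,0}$ convergence estimate (see Lemma \ref{dvkFalpha0} with $k=0$) requires a bound on $\mcG^1_v$, while the Gronwall estimate for $\mcG^1_v$ requires the field to be integrable with a suitable rate, and the field estimate via Lemma \ref{LDEgeneral} in turn requires the $F^{\alpha,0}$ convergence rate. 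Closing this loop requires a preliminary pass with log losses, followed by a second pass to sharpen --- exactly the iteration carried out in Lemmas \ref{Dng0} and \ref{Dng} (and in the base case of the proof of Theorem \ref{T2}). Your write-up names the bootstrap but does not address how these circular dependencies are broken; that is where the real work is, and the sentence "redone using free-streaming characteristics" does not supply it.
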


%\begin{remark}
%The estimates on $\mu(t)$ and $\mcG_v^1(t) +  \mcG_{x,v}^1(t)$ are not explicitly stated within Theorem 1.2 of \cite{{Pankavich2022}}.
%However, the uniform bounds on both the spatial support and the spatial and velocity derivatives of $g^\alpha(t,x,v)$ follow by applying the stated field estimate to Lemmas 2.3, 2.6, and 2.8 therein.
%\end{remark}

Of course, the upper bounds listed above do not guarantee a sharp rate of decay or an identification of the correct asymptotic behavior of these quantities.
For instance, $\rho(t,x)  = E(t,x) \equiv 0$ satisfy these estimates and can be constructed from a rudimentary initial profile in a neutral plasma, for example, in which the distributions of all positive and negative charges overlap everywhere in phase space.
Therefore, we now turn our attention to stating new results that every order of decay is actually attained by some solution of \eqref{VP}.

\subsubsection{Polyhomogeneous Expansions}
Recently, a growing interest has emerged in understanding the precise asymptotic behavior of small data solutions to \eqref{VP}, e.g. in \cite{Bigorgne-Ruiz,Ionescu,Schlue-Taylor}. More specifically, authors have focused on studying expansions of the charge density and field that include higher order terms beyond $\rho_{0,\infty}$ and $E_{0,\infty}$ defined above. As in  previous works, one key tool is a change of reference frame, defining
	\begin{equation}
	\label{eq:g}
	g^\alpha(t,x,v) = f^\alpha(t, x+vt, v)
	\end{equation}
for $\alpha = 1, ..., N$. In this new frame the charge density and field can be expressed, respectively, as
\begin{equation}
\label{rho}
\rho(t,x)= t^{-3} \sum_{\alpha=1}^N q_\alpha  \int_{\mathbb{R}^3} g^\alpha \left(t,y,\frac{x-y}{t} \right)\,dy,
\end{equation}
and
\begin{equation}
\label{E}
E(t,x) = \frac{1}{4\pi t^2} \sum_{\alpha=1}^N q_\alpha \iint \frac{\xi}{|\xi|^3} \ g^\alpha \left(t, y, \frac{x-y}{t} - \xi \right) \ dyd\xi.
\end{equation}
To arrive at these representations, one merely replaces $f^\alpha(t,x,v)$ by $g^\alpha(t, x-vt, v)$ within these quantities, institutes the change of variables $y = x-vt$ in the $v$ integrals, and for the field, further implements a change of variables in the convolution kernel \cite{Ionescu, Pankavich2022} of the form $\xi = \frac{z-x}{t}$.
Roughly speaking, in \cite{Bigorgne-Ruiz,Schlue-Taylor} the authors show that for large times, the expressions \eqref{rho} and \eqref{E} allow one to  expand
	\begin{equation}
	\label{eq:expansion}
	t^3 \rho(t,x) \approx \rho_0(t,x) + t^{-1} \rho_1(t,x) + \cdots + t^{-k} \rho_k(t,x)  + \cdots
	\end{equation}
and
	\begin{equation}
	\label{eq:expansion2}
	t^2 E(t,x) \approx E_0(t,x) + t^{-1} E_1(t,x) + \cdots + t^{-k} E_k(t,x)  + \cdots
	\end{equation}
where the functions $\rho_k$ and $E_k$  are uniformly bounded and possess limits as $t\to\infty$.
Here, we will identify the structure of these limits and determine conditions which ensure that each term within the expansion vanishes up to a chosen order.
In particular, these expressions will result by implementing within \eqref{rho} and \eqref{E} the following multi-dimensional Taylor expansion of $g^\alpha$ of order $\ell\in\N$:
	\begin{equation}
	\label{g:taylor}
	g^\alpha \left(t, y, \frac{x-y}{t}  \right)
	=
	\sum_{m=0}^{\ell} \sum_{|\beta| = m}  \frac{(-y)^\beta}{t^{m}\beta!} D^\beta_v\, g^\alpha \!\left(t, y, \frac{x}{t}  \right) + \sum_{|\beta| = \ell+1}  \frac{(-y)^\beta}{t^{\ell+1}\beta!}  D^\beta_v\, g^\alpha \!\left(t, y, \frac{x-\theta^\alpha y}{t}  \right) 
	\end{equation}
for some $\theta^\alpha\in[0,1]$.

\subsection{Notation}
We  define $\N_0=\{0,1,2,\dots\}$, whereas $\N=\{1,2,\dots\}$. As we have seen in Theorem \ref{oldT1}, the quantity $F^{\alpha,0}(t,v)=\int f^\alpha(t,x,v)\ dx$ plays a crucial role, as does its limit as $t\to\infty$, namely $F^{\alpha,0}_\infty(v)$. In order to perform and rigorously justify an expansion as in \eqref{eq:expansion} and \eqref{eq:expansion2}, we shall require spatial moments of velocity derivatives of $g^\alpha$ (or, equivalently, derivatives of $f^\alpha$) to be well defined. Let $K\in\N$ be given and assume that $f^\alpha\in C^{K}$. For any $\alpha = 1,\dots,N$ and $\ell = 0,\dots,K$ we let
	\begin{equation}
	\label{eq:f-al}
	F^{\alpha, \ell}(t, v) := \sum_{|\beta|=\ell} \frac{1}{\beta !} \int (-y)^\beta D_v^\beta g^\alpha(t,y,v) \ dy,
	\end{equation}
where $\beta=(\beta_1,\beta_2,\beta_3)$ with $\beta_i\in\N_0$ and $|\beta|=\beta_1+\beta_2+\beta_3$. 
These quantities generalize the spatial averages $F^{\alpha, 0}(t, v)$ via insertion of \eqref{g:taylor} up to order $\ell$ and upon removing powers of $t$.
As usual, we have $D_v^\beta=\partial_{v_1}^{\beta_1}\partial_{v_2}^{\beta_2}\partial_{v_3}^{\beta_3}$ as well as $(-y)^\beta=(-y_1)^{\beta_1}(-y_2)^{\beta_2}(-y_3)^{\beta_3}$, and $\beta!=\beta_1!\beta_2!\beta_3!$. With this, we define
	\begin{equation}
	\label{eq:rho-l}
	\rho_\ell(t,x) :=  \sum_{\alpha=1}^N q_\alpha F^{\alpha, \ell} \left( t, \frac{x}{t} \right).
	\end{equation}
For each term, $\rho_\ell$, in the expansion of the particle density, we can define the associated term in the expansion of the field by
$$ E_\ell(t,x) :=  \nabla_x (\Delta_x)^{-1} \rho_\ell(t,x) =  \frac{1}{4\pi} \sum_{\alpha=1}^N q_\alpha \int \frac{\xi}{|\xi|^3}  F^{\alpha, \ell} \left( t, \frac{x}{t} - \xi \right) d\xi.$$
It will be shown that $F^{\alpha, \ell}(t, v), \rho_\ell(t,x)$, and $E_\ell(t,x)$ converge, respectively, to
	\begin{equation}
	\label{eq:F-al-inf}
	F^{\alpha, \ell}_\infty(v)
	:=
	 \sum_{|\beta|=\ell} \frac{1}{\beta !} \int (-y)^\beta D_v^\beta f^\alpha_\infty(y,v) \ dy,	
	 \end{equation}
	\begin{equation}
	\label{eq:rho-l-inf}
	\rho_{\ell,\infty}(v)
	:=
	\sum_{\alpha=1}^N q_\alpha F^{\alpha, \ell}_\infty  ( v ),
	\end{equation}
and
	\begin{equation}
	\label{eq:E-l-inf}
	E_{\ell,\infty}(v) :=\nabla_v \left(\Delta_v \right)^{-1} \rho_{\ell,\infty}(v),
	\end{equation}
with the latter two limits occurring along $v = \frac{x}{t}$ and where $f_\infty^\alpha$ are the limiting functions introduced in Theorems \ref{oldT1} and \ref{oldT2}.
In order to write an expansion of the form \eqref{eq:expansion} precisely, we introduce notation for the bounds that we will obtain for remainder terms, which involve derivatives of the translated distribution functions; 
namely, for every $k \in \mathbb{N}$ we define
	\begin{equation}
	\label{eq:G^k_v}
	\mcG^k_v(t) := 1 + \max_{\alpha = 1, ..., N} \sum_{|\beta|=k} \| D_v^\beta g^\alpha(t) \|_\infty
	\end{equation}
and
	\begin{equation}
	\label{eq:G^k_x,v}
	\mcG^k_{x,v}(t) :=  1+ \max_{\alpha = 1, ..., N} \sum_{\substack{|\beta| + |\gamma|=k \\ |\gamma| > 0}} \| D_v^\beta D_x^\gamma  g^\alpha(t) \|_\infty,
	\end{equation}
whenever these derivatives exist. In particular, we note that in \eqref{eq:G^k_x,v} the quantity $\mcG^k_{x,v}(t)$ includes $k$th order $x$-derivatives, namely 
$\displaystyle  \max_{\alpha = 1, ..., N} \sum_{|\gamma|=k} \| D_x^\gamma g^\alpha(t) \|_\infty.$

\subsection{Main Results}
We are now ready to present our main results, which extend the asymptotic behavior described by Theorems \ref{oldT1} and \ref{oldT2} to higher order expansions and close the estimates with bounds on higher order velocity derivatives of $g^\alpha$.
\begin{theorem}
\label{T1}
Let $m \in \N_0$ be given. There exist nontrivial $f^\alpha \in C^{m+1} \left((0,\infty) \times \mathbb{R}^6 \right)$  for every $\alpha = 1, ..., N$ satisfying \eqref{VP} such that \eqref{Assumption} holds,  $\mcM = 0$, and
$$
\begin{gathered}
\| \rho(t)\|_\infty \sim t^{-m-3},\\
\| E(t)\|_\infty \sim t^{-m-2}.
\end{gathered}
$$
If $m = 0$, then we have 
$$\| \nabla_x E(t)\|_\infty \sim t^{-3}$$
and for every $\alpha = 1, ..., N$ there is $f^\alpha_\infty \in C_c^{m}(\bfR^6)$ such that
$$\sup_{(x,v) \in \bfR^6} \left | f^\alpha \left(t,x +vt - \frac{q_\alpha}{m_\alpha} \ln(t)E_{0,\infty}(v), v \right) - f^\alpha_\infty(x,v) \right |  \lesssim t^{-1}\ln^{4}(t).$$
In contrast, for $m \geq 1$  we have for every $k =1, ..., m$
$$\| \nabla_x^k E(t)\|_\infty \sim t^{-m-3}, $$
and for every $\alpha = 1, ..., N$ there is $f^\alpha_\infty \in C_c^{m}(\bfR^6)$ such that
$$\sup_{(x,v) \in \bfR^6} \left | f^\alpha(t,x +vt, v) - f^\alpha_\infty(x,v) \right |  \lesssim t^{-m}. $$
\end{theorem}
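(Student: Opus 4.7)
The plan is to combine a higher-order asymptotic expansion of $\rho$ and $E$ with a backward (final-state) construction that tunes the leading $m$ terms of the expansion to vanish identically. Inserting the Taylor expansion \eqref{g:taylor} at order $m$ into the representations \eqref{rho}--\eqref{E} produces the decomposition
\begin{equation*}
t^3\rho(t,x) \;=\; \sum_{\ell=0}^{m} t^{-\ell}\rho_\ell(t,x) \;+\; t^{-m-1}R_\rho(t,x),
\end{equation*}
and an analogous formula for $t^2E(t,x)$, with $R_\rho$ bounded by $\mcG^{m+1}_v(t)$ times spatial moments of $g^\alpha$. The lemmas of Section~\ref{Lemmas} will show that each $\rho_\ell(t,\cdot)$ converges uniformly, with polynomial rate, to $\rho_{\ell,\infty}(x/t)$, and likewise $E_\ell(t,\cdot)\to E_{\ell,\infty}(x/t)$. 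Consequently, for a solution in which $\rho_{\ell,\infty}\equiv 0$ for $\ell=0,1,\dots,m-1$ and $\rho_{m,\infty}\not\equiv 0$, and for which $\mcG^k_v,\mcG^k_{x,v}\lesssim 1$ up to $k=m+1$, the upper bound $\|\rho(t)\|_\infty\lesssim t^{-m-3}$ is immediate, while the matching lower bound follows by evaluating $t^{m+3}\rho(t,tv)\to\rho_{m,\infty}(v)$ at any point where $\rho_{m,\infty}(v)\neq 0$. The corresponding bounds on $E$ and $\nabla_x^k E$ follow from the analogous expansion of $E$ combined with standard $L^\infty$ Calder\'on--Zygmund estimates for the Poisson kernel.

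To produce a solution with the prescribed vanishing profiles, take $N=2$ with $q_1=-q_2=1$ (the general multispecies case reduces to this by grouping positive and negative species). Choose $f^1_\infty,f^2_\infty\in C_c^{m+1}(\R^6)$ such that the pointwise-in-$v$ identities
\begin{equation*}
\sum_{\alpha=1}^{2}q_\alpha\sum_{|\beta|=\ell}\frac{1}{\beta!}\int(-y)^\beta D_v^\beta f^\alpha_\infty(y,v)\,dy \;=\; 0,\qquad \ell=0,1,\dots,m-1,
\end{equation*}
hold identically while the $\ell=m$ analogue is not identically zero; these are finitely many linear constraints on $f^1_\infty-f^2_\infty$, easily satisfied by modifying polynomial-weighted moments of a fixed smooth bump. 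Then solve the associated final-state problem for \eqref{VP} by a contraction in a weighted space of candidate translated distributions $g^\alpha(t,\cdot,\cdot)$ with prescribed limit $f^\alpha_\infty$, using \eqref{Assumption} as a bootstrap hypothesis that is recovered a posteriori from the expansion; the margin $p>5/3$ in \eqref{Assumption} provides the room needed to close the fixed point uniformly in $m$, and automatically enforces $\mcM = 0$ because the $\ell = 0$ identity is precisely the neutrality condition.

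The scattering claims then split. The case $m=0$ reduces to Theorem~\ref{oldT1}(c), since $E_{0,\infty}\not\equiv 0$ and the logarithmic modification of the characteristics survives. For $m\ge 1$, where $E_{0,\infty}\equiv 0$, the equation for $g^\alpha$ along free-streaming characteristics is
\begin{equation*}
\partial_t g^\alpha(t,x,v) \;=\; -\tfrac{q_\alpha}{m_\alpha}E(t,x+vt)\cdot\bigl(\nabla_v - t\nabla_x\bigr)g^\alpha(t,x,v),
\end{equation*}
whose worst term is $tE\cdot\nabla_x g^\alpha$, of size $t\cdot t^{-m-2}=t^{-m-1}$; integration from $t$ to $\infty$ together with $\mcG^1_{x,v}\lesssim 1$ yields the advertised $t^{-m}$ linear scattering rate.

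The main obstacle I anticipate is the uniform control $\mcG^k_v(t),\mcG^k_{x,v}(t)\lesssim 1$ for every $k\le m+1$. Differentiating the Vlasov equation in $v$ generates powers of $t$ through the $t\nabla_x$ commutator acting on $g^\alpha$, and these must be absorbed by polynomial decay of the corresponding derivatives of the field, which are themselves produced by the same expansion at order $m$. I expect this circular coupling to be broken by an inductive bootstrap on $k$: at each step the already-established polynomial decay of $\nabla_x^j E$ for $j<k$ closes a Gr\"onwall estimate for the $k$th derivatives of $g^\alpha$, which in turn feeds into the next level. This technically intricate loop is precisely what the dedicated Section~\ref{sec:bounds-derivs-g} is set aside to handle.
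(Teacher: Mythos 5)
Your overall architecture matches the paper's: identify the right vanishing-moment conditions on the scattering data $f^\alpha_\infty$, use the polyhomogeneous expansion (Section~\ref{Lemmas} and Theorem~\ref{T2}) to translate those conditions into sharp $t^{-m-3}$, $t^{-m-2}$ upper and lower bounds, and read off the scattering rates by integrating the translated Vlasov equation. Your observation that the lower bound follows by evaluating $t^{m+3}\rho(t,tv)\to\rho_{m,\infty}(v)$ at a point where the limit is nonzero is exactly the mechanism used. Where you diverge from the paper is in the realization of the scattering data as an actual solution: you propose to re-prove a final-state theorem via contraction in a weighted space with \eqref{Assumption} as a bootstrap, whereas the paper simply invokes the existing scattering map of Flynn--Ouyang--Pausader et al.\ (and Schlue--Taylor), which already produces a unique $C^{m+1}$ solution with the prescribed limit $f^\alpha_\infty \in C_c^{m+1}\subset W^{2,\infty}$. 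Your route is not wrong in spirit --- it is essentially a sketch of what those authors proved --- but it replaces a citation with a substantial amount of unstated analysis, and your stated reason why the fixed point closes (``the margin $p>5/3$ in \eqref{Assumption}'') is not really the mechanism; the scattering-map construction hinges on the $t^{-2}$ dispersive decay of the field and the regularity of the final data, not on the particular exponent $5/3$. On the construction of the moments themselves, the paper makes your ``modify polynomial-weighted moments of a bump'' explicit with weighted Gegenbauer polynomials; same idea, so no genuine difference there.

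One genuine gap: you claim $\mcM=0$ is ``automatically enforced because the $\ell=0$ identity is precisely the neutrality condition,'' but your list of constraints runs over $\ell=0,\dots,m-1$, which is \emph{empty} when $m=0$. In that case you want $\rho_{0,\infty}\not\equiv 0$, so you cannot rely on the $\ell=0$ vanishing; you must still impose $\int\rho_{0,\infty}(v)\,dv = 0$ separately, which the paper does by choosing the net velocity profile $\eta$ with $\int\eta\,dv=0$. Also worth flagging: even for $m\geq 1$, $\rho_{0,\infty}\equiv 0$ is strictly stronger than $\mcM=0$, so it is better to say the vanishing condition \emph{implies} neutrality rather than \emph{is} neutrality.
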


\begin{remark}
To the best of our knowledge, Theorem \ref{T1} is the first result to demonstrate decay rates of the charge density and electric field that are nontrivial (i.e., $\rho, E \not\equiv 0$) and faster than the dispersive rates (of $t^{-3}$ and $t^{-2}$, respectively) for \eqref{VP} in $\bfR^3$.
\end{remark}

Theorem \ref{T1} will be implied by the following result (taking $m = n+1$ and $\rho_{n+1,\infty} \not\equiv 0$) when $m \geq 1$, which, in turn, will be established inductively.

\begin{theorem}
\label{T2}
Let $n\in\N_0$ be given. Consider initial data $f_0^\alpha\in C_c^{n+2}(\R^6)$ launching solutions $f^\alpha\in C^{n+2}((0,\infty)\times\R^6)$ of \eqref{VP} for $\alpha = 1, ..., N$. If
\begin{equation*}
\rho_{\ell,\infty} \equiv 0 \qquad \mathrm{ for \ every \ } \ell \in \{0, ..., n\},
\end{equation*}
then for every $\alpha = 1, ..., N$ there is $f^\alpha_\infty \in C_c^{n+1}(\bfR^6)$ with $F^{\alpha, n+1}_\infty$, $\rho_{n+1,\infty}$ and $E_{n+1,\infty}$ defined by \eqref{eq:F-al-inf}, \eqref{eq:rho-l-inf}, and \eqref{eq:E-l-inf}, respectively,
such that
$$
\begin{gathered}
\sup_{x \in \mathbb{R}^3} \left | t^{n+4}\rho(t,x) - \rho_{n+1, \infty} \left( \frac{x}{t}\right)  \right | \lesssim t^{-1}, \\
\sup_{x \in \mathbb{R}^3} \left | t^{n+3} E(t,x) - E_{n+1,\infty} \left( \frac{x}{t}\right)  \right | \lesssim t^{-1}, \\ 
\end{gathered}
$$
with
%\begin{equation}
\begin{gather}
\sup_{x \in \mathbb{R}^3} \left | t^{n+3}\nabla^k_xE(t,x) - \nabla_v^k E_{n+1-k,\infty} \left( \frac{x}{t}\right)  \right | \lesssim t^{-1}, 
\label{eq:derivs-e} \\
\mcG^k_{x,v}(t) + \mcG^k_{v}(t) \lesssim 1\nonumber
\end{gather}
%\end{equation}
for every $k \in \{1, ..., n+1\}$,
and
$$
\begin{gathered}
\sup_{x \in \mathbb{R}^3} \left | t^{n+4}\nabla^{n+2}_xE(t,x) - \nabla_v^{n+2}E_{0,\infty} \left( \frac{x}{t}\right)  \right | \lesssim t^{-1} \ln(t), \\
\mcG^{n+2}_{x,v}(t) + \mcG^{n+2}_{v}(t) \lesssim 1.
\end{gathered}
$$
In each of these cases, we further have
$$\sup_{(x,v) \in \bfR^6} \left | f^\alpha(t,x +vt, v) - f^\alpha_\infty(x,v) \right |  \lesssim t^{-n-1}$$
and
$$\sup_{(x,v) \in \bfR^6} \left | \nabla_x^i \nabla_v^j f^\alpha \left(t,x+vt, v \right) - \nabla_x^i \nabla_v^j  f^\alpha_\infty(x,v) \right | \lesssim \max \left \{ t^{-n-1}, t^{j -n -2} \right \}$$
for $i,j \in \mathbb{N}_0$ with $1 \leq i+j \leq n+1$.
\end{theorem}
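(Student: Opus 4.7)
The plan is to proceed by strong induction on $n$: the base case $n = 0$ refines the conclusions of Theorem \ref{oldT2} by additionally extracting the $\rho_{1,\infty}$ and $E_{1,\infty}$ profiles together with the second-derivative bound on $E$, while the inductive step adds the new hypothesis $\rho_{n,\infty} \equiv 0$ in order to pass from the $n-1$ result to the $n$ result. The core analytic device throughout is the Taylor expansion \eqref{g:taylor} applied at order $\ell = n+1$ to the representation \eqref{rho}. Substitution of \eqref{g:taylor} into \eqref{rho}, followed by appeal to \eqref{eq:f-al} and \eqref{eq:rho-l}, yields the polyhomogeneous identity
\begin{equation*}
t^3 \rho(t,x) = \sum_{m=0}^{n+1} t^{-m} \rho_m(t,x) + \mathcal{R}_{n+1}(t,x),
\end{equation*}
where the remainder satisfies $|\mathcal{R}_{n+1}(t,x)| \lesssim t^{-(n+2)} \, \mathcal{G}^{n+2}_v(t)$ times a controlled moment of $g^\alpha$ in $y$; an analogous expansion for $t^2 E(t,x)$ is produced by inserting \eqref{g:taylor} into \eqref{E}.

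The first task is to show, via the preliminary lemmas of Section \ref{Lemmas}, that for each $m \in \{0,\dots,n+1\}$ the spatial moment $F^{\alpha,m}(t,v)$ converges to $F^{\alpha,m}_\infty(v)$ at a rate controlled by the decay of $g^\alpha - f^\alpha_\infty$ in its $v$-derivatives, so that $\rho_m(t,x)$ is close to $\rho_{m,\infty}(x/t)$. Combined with the hypothesis $\rho_{m,\infty} \equiv 0$ for $m \leq n$, each of the terms $t^{-m} \rho_m(t,x)$ with $m \leq n$ contributes only to higher-order errors in the expansion, leaving $t^{-(n+1)} \rho_{n+1,\infty}(x/t)$ as the surviving leading-order piece of $t^3 \rho(t,x)$. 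Multiplication by $t$ produces the $t^{n+4} \rho(t,x) \to \rho_{n+1,\infty}(x/t)$ asymptotic with error $O(t^{-1})$, and the matching statement for $E$ follows either by convolving against the Coulomb kernel or by applying the same Taylor argument directly to \eqref{E}.

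For the derivative estimates on $\nabla_x^k E$ with $1 \leq k \leq n+1$, each additional spatial derivative is absorbed by running \eqref{g:taylor} at one order higher, which explains why the asymptotic profile involves $E_{n+1-k,\infty}$ rather than $E_{n+1,\infty}$: one matches the index of the surviving Taylor term to the number of derivatives peeled off the convolution kernel in \eqref{E}. The step up to $\nabla_x^{n+2} E$ is what produces the logarithmic correction, stemming from the interaction between the slowest-decaying Taylor remainder and the Coulomb singularity, exactly as in the corresponding estimate of Theorem \ref{oldT1}(b). The linear scattering of $f^\alpha$ is then obtained by integrating the identity
\begin{equation*}
\partial_t g^\alpha(t,x,v) = -\frac{q_\alpha}{m_\alpha} E(t, x+vt) \cdot \bigl( \nabla_v g^\alpha - t \nabla_x g^\alpha \bigr)(t,x,v)
\end{equation*}
from $t$ to $\infty$: using $\|E(\tau)\|_\infty \lesssim \tau^{-(n+3)}$ (a consequence of the just-proved main estimate) together with $\mathcal{G}^1_{x,v}(t) + \mathcal{G}^1_v(t) \lesssim 1$, the integrand is dominated by $\tau^{-(n+2)}$, yielding convergence of $g^\alpha(t)$ to $f^\alpha_\infty$ at the claimed rate $t^{-n-1}$. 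Differentiating this identity and using $\nabla_v g^\alpha = \nabla_v f^\alpha + t \nabla_x f^\alpha$ along free-streaming trajectories introduces precisely the $t^{j-n-2}$ growth factor that appears in the derivative scattering bound.

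The main obstacle is establishing the uniform-in-time bounds $\mathcal{G}^k_{x,v}(t) + \mathcal{G}^k_v(t) \lesssim 1$ for $k$ up to $n+2$: these are interlocked with the field decay through the transport equations satisfied by derivatives of $g^\alpha$, which are forced by increasingly many derivatives of $E$ composed along free-streaming characteristics and pick up extra factors of $t$ each time a $v$-derivative is moved across the shift $x \mapsto x+vt$. Delivering these bounds is the content of Section \ref{sec:bounds-derivs-g}, and in the present scheme they are imported as a black box. A subsidiary technical point is that the remainder $\mathcal{R}_{n+1}$ requires control of $\int |y|^{n+2} |D_v^\beta g^\alpha(t,y,v)|\,dy$, which is secured by the compact support of the initial data and its propagation under the characteristic flow in combination with those same derivative bounds.
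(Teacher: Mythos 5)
Your overall architecture — induction on $n$, Taylor expansion of $g^\alpha$ inside \eqref{rho} and \eqref{E} at order $n+1$, convergence of the moments $F^{\alpha,m}$ combined with the vanishing hypothesis to isolate $\rho_{n+1,\infty}$, and scattering of $g^\alpha$ by integrating the transport equation in $t$ — matches the paper's strategy. However, there is a genuine gap in the claim that the uniform bounds $\mcG^{n+2}_{x,v}(t)+\mcG^{n+2}_v(t)\lesssim 1$ may be ``imported as a black box'' from Section~\ref{sec:bounds-derivs-g}. Lemma~\ref{Dng} does not deliver this bound. Under the inductive hypothesis it only yields $\mcG_{x,v}^{n+2}(t)\lesssim 1$, $\Vert\nabla_x^{n+2}E(t)\Vert_\infty\lesssim t^{-4-n}\ln(t)$, the inequality \eqref{DngDnE}, and the merely logarithmic bound $\mcG_v^{n+2}(t)\lesssim\ln^2(t)$. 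Closing the loop to get $\mcG_v^{n+2}(t)\lesssim 1$ is a bootstrap that must be run inside the inductive step: one first proves $\Vert\nabla_v^{n+1}F^{\alpha,0}(t)-\nabla_v^{n+1}F^{\alpha,0}_\infty\Vert_\infty\lesssim t^{-1}$ (Lemma~\ref{dvkFalpha0} using the preliminary bounds above), then applies Lemma~\ref{LDEgeneral} with $k=n+2$, $\ell=0$ — crucially this lemma needs only the $(n+1)$st $v$-derivative of $F^{\alpha,0}$, not the $(n+2)$nd, which is exactly the mechanism that breaks the circularity — to sharpen $\Vert\nabla_x^{n+2}E(t)\Vert_\infty\lesssim t^{-n-5}\ln^3(t)$, and only then does revisiting \eqref{DngDnE} produce $\mcG_v^{n+2}(t)\lesssim 1$. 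If you try to quote the uniform bound up front, your Gronwall inequality for $\nabla_v^{n+2}g^\alpha$ feeds back into $\Vert\nabla_x^{n+2}E\Vert_\infty$ and the argument never closes.

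A secondary point: your explanation of why the profile for $\nabla_x^k E$ involves $E_{n+1-k,\infty}$ (``matching the index of the surviving Taylor term to the number of derivatives peeled off the kernel'') misidentifies the mechanism. Since $n+1-k\leq n$ for $k\geq 1$, the functions $E_{n+1-k,\infty}$ are identically zero under the hypothesis; the correct limit at the natural scaling $t^{n+3+k}$ would be $\nabla_v^k E_{n+1,\infty}$. The weaker $t^{n+3}$-normalization appears because reaching the sharp rate for $\nabla^k_x E$ would require control of $\mcG_v^{k+n+2}(t)$, and hence of $\Vert\nabla_x^{k+n+2}E(t)\Vert_\infty$, which is more regularity than the hypotheses afford — see the paper's remark following Theorem~\ref{T2}. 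The statement in \eqref{eq:derivs-e} is therefore a deliberately suboptimal but closeable estimate, not a natural Taylor matching.
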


\begin{remark}
As mentioned above, Theorem \ref{T2} will be proved by induction. We note that we may lose decay on the spatial derivatives of $E(t,x)$ at every step of the induction argument. For instance, within the base case ($n=0$) and focusing on $k=1$, one could attempt to replace \eqref{eq:derivs-e} with the sharper estimate
$$\sup_{x \in \mathbb{R}^3} \left | t^4\nabla_x E(t,x) - \nabla_v E_{1,\infty}  \left( \frac{x}{t}\right)  \right | \lesssim t^{-1}.$$
In fact, because  $E_{0,\infty} \equiv 0$ in such a case, one would expect convergence of the field and its derivatives to the next order limit, namely $E_{1,\infty}$.
However, establishing a convergence result like this would require estimates of higher-order derivatives of $g^\alpha$, e.g., $\mcG_v^3(t) \lesssim 1$.
More generally, at the $n$th step of the induction argument we estimate $\mcG_v^{n+2}(t)$ in order to obtain a sharp rate of decay for $\rho(t,x)$ and $E(t,x)$. However, in order to achieve the optimal rate of decay for $k$th-order derivatives of these quantities, one would require estimates of $\mcG_v^{k+n+2}(t)$, which would further necessitate an estimate of $\Vert \nabla_x^{k+n+2} E(t) \Vert_\infty$. Thus, sharp estimates of derivatives require even greater regularity and lead to a loss of closure in the argument.
\end{remark}
\begin{remark}
\label{Finf}
Throughout the proof of Theorem \ref{T2}, we will establish and utilize the functions $F^{\alpha, \ell}_\infty(v)$, which will be initially understood only as the large time asymptotic limits of the functions $F^{\alpha, \ell}(t,v)$.
However, at the end of the proof we will justify the regularity needed to assert the ultimate relationship between the limiting distributions $f_\infty(x,v)$ and the limiting spatial moments $F^{\alpha, \ell}_\infty(v)$ given by \eqref{eq:F-al-inf}.
Hence, this relationship is not needed to establish our results. 
Only the existence of the limits $F^{\alpha, \ell}_\infty(v) = \lim_{t\to\infty} F^{\alpha, \ell}(t,v)$ will be necessary, independent of an explicit formula for the limiting functions. 
\end{remark}

\begin{remark}
It appears that the main physical mechanism that generates further decay of these quantities is \emph{charge cancellation} within the density $\rho(t,x)$, rather than dispersion or phase mixing, the latter of which occurs in the case of Landau damping \cite{BMM,Ionescu2, VM}. If we think of a plasma comprised of two identical, yet oppositely charged, species, this means that the decay comes from the distribution of the two species overlapping with one another in physical space asymptotically as $t \to \infty$. The precise rate of decay, then, has to do with how quickly they overlap.
To put this into a more concrete context, one may consider a two-species ($N=2$) plasma with $q_\pm = \pm1$, limiting Gaussian distributions $f_\infty^\pm(x,v) = \frac{1}{(2\pi \sigma_\pm^3)^{3/2}} \exp\left ({-\frac{|x-\mu_\pm|^2}{2\sigma_\pm^2}} \right ) \phi(v)$ for some smooth $\phi \not\equiv 0$ with $\int \phi(v) \ dv = 0$, $\mu_\pm \in \bfR$, and $\sigma_\pm > 0$. In this case the following possibilities arise:
\begin{itemize}
\item
If $\mu_+ \neq \mu_-$ and $\sigma_+ \neq \sigma_-$, then the distributions don't overlap much at all, though $\rho_{0,\infty} \equiv 0$ from \eqref{eq:F-al-inf} and \eqref{eq:rho-l-inf} due to the normalization of total positively/negatively charged particles.
\item
Instead, if $\mu_+ = \mu_-$ and $\sigma_+ \neq \sigma_-$, then these distributions share the same mean position, and hence their first moments cancel, yielding $\rho_{0,\infty} \equiv\rho_{1,\infty} \equiv 0$, but $\rho_{2,\infty} \not\equiv 0$.
\item
Finally, if $\mu_+ = \mu_-$ and $\sigma_+ = \sigma_-$, then these distributions overlap identically, which yields $\rho_{k,\infty} \equiv 0$ for all $k \in \mathbb{N}_0$ and $\rho(t,x) \equiv 0$ for all $t \geq 0$, $x \in \bfR^3$.
\end{itemize}
Such cancellation can be extended to an arbitrary number of moments, rather than merely the first or second, and this is implemented within the proof of Theorem \ref{T1} (Section \ref{sec:proofs}).
Moreover, this idea is reminiscent of the behavior of electric dipoles or multipoles \cite{Jackson}, in which charges of opposite sign located an infinitesimal distance apart lead to an atypical dominant term within the multipole expansion of the electric field, thereby generating field behavior that possesses faster spatial decay than generic solutions of Poisson's equation (e.g., $|E(t,x)| \sim |x|^{-3}$ for $|x|$ sufficiently large). Of course, if particle positions generally disperse at linear rates in time, this increased spatial decay transforms into time decay (e.g., $\Vert E(t)\Vert_\infty \sim t^{-3}$ for $t$ sufficiently large).

\end{remark}

\begin{remark}
The assumption \eqref{Assumption} was first used in \cite[Lemma 2.2]{Pankavich2022}, where it was shown that it implies the dispersive rates of $\|E(t)\|_\infty\lesssim t^{-2}$ and  $\|\rho(t)\|_\infty\lesssim t^{-3}$. This was established via an estimate on the spread of the spatial characteristics. The exponent $p = 5/3$ appearing in \eqref{Assumption} is the result of a simple algebraic constraint, and while it is conceivable that it could be relaxed, this is not the focus of the present article.
\end{remark}

\begin{remark}
Our methods are not constrained to dimension $d=3$ and could be used to arrive at similar decay results concerning small data solutions of \eqref{VP} for $d \geq 4$ \cite{PankavichHighd}.
\end{remark}

\begin{remark}
We believe that by combining the tools of \cite{BP} with the methods developed herein and the scattering map of \cite{Bigorgne2}, Theorems \ref{T1} and \ref{T2} can be extended to achieve analogous decay rates and self-similar expansions for small data solutions of the relativistic Vlasov-Maxwell system (see also \cite{Bigorgne1}).
\end{remark}

\subsection{Strategy of the Proof}
To prove the theorems we reformulate the original problem within a dispersive reference frame that is co-moving with the particles.
More specifically, defining $g^\alpha$ as in \eqref{eq:g} and applying the aforementioned change of variables (see \eqref{rho} and \eqref{E}) to the field and charge density, \eqref{VP} becomes
\begin{equation}
\tag{VP$_g$}
\label{VPg}
\left \{ \begin{aligned}
& \partial_{t}g^\alpha - \frac{q_\alpha}{m_\alpha} t E(t,x+vt)\cdot \nabla_{x}g^\alpha+ \frac{q_\alpha}{m_\alpha} E(t,x+vt) \cdot\nabla_{v}g^\alpha=0, \qquad \alpha = 1, ..., N\\
& E(t,x) = \frac{1}{4\pi t^2} \sum_{\alpha=1}^N  q_\alpha \iint \frac{\xi}{|\xi|^3} \ g^\alpha \left(t, y, \frac{x-y}{t} - \xi \right) \ dy d\xi
\end{aligned} \right .
\end{equation}
with
$$\rho(t,x)= t^{-3} \sum_{\alpha=1}^N q_\alpha \int_{\mathbb{R}^3} g^\alpha \left(t,y, \frac{x-y}{t} \right)\,dy$$
and the initial conditions $g^\alpha(0,x,v) = f^\alpha_0(x,v)$. 
As shown within \cite{Pankavich2022}, the spatial support and velocity derivatives of $g^\alpha$ can grow logarithmically in time or become uniformly bounded, depending upon the rate of field decay (see Lemmas \ref{Sg}, \ref{Dng0}, and \ref{Dng}), while the corresponding spatial support and $v$-derivatives of $f^\alpha$ grow at least linearly in time.
Moreover, the Taylor expansion in \eqref{g:taylor} and the dependence of the charge density on spatial integrals of $g^\alpha$ within \eqref{eq:f-al} and \eqref{eq:rho-l} inherently demonstrate the need to control the growth of the spatial support and velocity derivatives of these translated distribution functions in order to gain an extra power of time decay within each term of the expansion.
Finally, capturing the exact asymptotic behavior, rather than mere decay estimates, at each step of the induction argument within the proof of Theorem \ref{T2} enables us to establish the next order limiting distribution of $F^{\alpha,\ell}(t,v)$ and propagate the convergence of the charge density and electric field to their precise limits from one step to the next.

\section{Preliminary Lemmas}
\label{Lemmas}
This section is dedicated to a sequence of lemmas concerned with estimates of the field, the charge density, and their respective derivatives. As these lemmas will be used to prove Theorem \ref{T2}, we will assume that $\mcM = 0$ and $\rho_{0,\infty} \equiv 0$ throughout this section. Hence, the result of Theorem \ref{oldT2} applies immediately.
%To help the reader, let us summarize the results of the lemmas appearing in this section:
%	\begin{itemize}
%	\item
%	Lemma \ref{LDE}: bound on  $\Vert \nabla^k_xE(t) \Vert_\infty$.
%	\item
%	Lemma \ref{dvkrho}: bound on  $\infnorm{\nabla^k_x \rho(t)}$.
%	\item
%	Lemma \ref{dvkFalpha0}: bound on  $\infnorm{\nabla_v^{k} F^{\alpha,0}(t) - \nabla_v^{k} F^{\alpha,0}_\infty} $.
%	\item
%	Lemma \ref{dvkFell}: bound on  $\infnorm{\nabla_v^k F^{\alpha,\ell}(t)-\nabla_v^k F^{\alpha,\ell}_\infty}$.
%	\item
%	Lemma \ref{DEgeneral}: bound on  $\sup_{x \in \bfR^3} \left | t^{k+\ell+2} \nabla^k_xE(t,x) - \nabla^k_vE_{\ell,\infty}\left(\frac{x}{t} \right) \right |$ by $k$ derivatives of $F^{\alpha,\ell}$.
%	\item
%	Lemma \ref{Drhogeneral}: bound on  $\sup_{x \in \bfR^3} \left | t^k \nabla^k_x\rho_\ell(t,x) - \nabla^k_v\rho_{\ell,\infty}\left(\frac{x}{t} \right) \right |$.
%	\item
%	Lemma \ref{LDEgeneral}: bound on  $\sup_{x \in \bfR^3} \left | t^{k+\ell+2} \nabla^k_xE(t,x) - \nabla^k_vE_{\ell,\infty}\left(\frac{x}{t} \right) \right | $  by $k-1$ derivatives of $F^{\alpha,\ell}$ (and a $\ln(t)$).
%	\item
%	Lemma \ref{LDensitygeneral}: bound on  $\sup_{x \in \bfR^3} \left | t^{\ell+3} \rho(t,x) - \rho_{\ell,\infty} \left(\frac{x}{t} \right) \right |$.
%%	\item
%%	Lemma \ref{LFieldgeneral}: bound on  $\sup_{x \in \bfR^3} \left | t^{\ell+2} E(t,x) - E_{\ell,\infty}\left(\frac{x}{t} \right) \right |$.
%	\end{itemize}

\subsection{Further Notation and Preliminary Estimates}
Prior to stating the lemmas, we first introduce some notation related to the translated distribution functions.
As mentioned previously, we let
$$g^\alpha(t,x,v) = f^\alpha(t,x+vt, v)$$
%and because the translation alters the spatial characteristics of this system, we further define
%$$\mcY(t,\tau, x,v) = \mcX(t,\tau, x, v) - t \mcV(t, \tau, x, v)$$
%so that
%$$ \dot{\mcY}(t) = - \frac{q_\alpha}{m_\alpha} t E(t, \mcY(t) + t \mcV(t))$$
%with $\mcY(\tau) = x - v\tau$. 
%In addition,
and note that $\Vert g^\alpha(t) \Vert_\infty \leq \Vert f_0^\alpha \Vert_\infty$ for all $t \geq 0$ due to \eqref{VPg}.
As our approach relies heavily upon the growth of the  support of $g^\alpha$, 
we  define the sets
	\[
	\forall t\geq0,\qquad\bS(t) = \overline{\left \{ (x,v) \in \R^6 : g^\alpha(t,x,v) \neq 0 \right \}},
	\]
and
	\begin{equation*}
	\label{eq:supp-x-g}
	\forall t\geq0,\forall v\in\R^3,\qquad\bS_x(t,v) = \overline{\left \{x \in \bfR^3 : g^\alpha(t,x,v) \neq 0 \right \}}
	\end{equation*}
%and
%	\begin{equation*}
%	\forall t\geq0,\forall x\in\R^3,\qquad\bS_v(t,x) = \overline{\left \{v \in \bfR^3 : g^\alpha(t,x,v) \neq 0 \right \}}
%	\end{equation*}
and the quantity
$$\mu(t) = \max_{\alpha = 1, ..., N} \sup_{v\in \bfR^3} \left | \bS_x(t,v) \right |.$$

We also introduce some notation related to multi-indices, as these can become cumbersome.
In the forthcoming discussion, the Greek letters $\beta$ and $\gamma$ shall denote multi-indices of length $3$, with a subscript clarifying whether they belong to derivatives acting on the $x$ variable or the $v$ variable. They are always elements of $\N_0^3$, i.e. their entries can take the values $\{0,1,2,\dots\}$. Hence, for $\beta_v=(\beta_{v,1},\beta_{v,2},\beta_{v,3})$ and $\beta_x=(\beta_{x,1},\beta_{x,2},\beta_{x,3})$  we write derivatives as follows:	\[
	D_v^{\beta_v}
	=
	\partial_{v_1}^{\beta_{v,1}}\partial_{v_2}^{\beta_{v,2}}\partial_{v_3}^{\beta_{v,3}}
\qquad \mathrm{and} \qquad
	D_x^{\beta_x}
	=
	\partial_{x_1}^{\beta_{x,1}}\partial_{x_2}^{\beta_{x,2}}\partial_{x_3}^{\beta_{x,3}}.
	\]
These can be concatenated, so that $D_v^{\beta_v}D_x^{\beta_x}$ is given by
	\[
	D_v^{\beta_v}D_x^{\beta_x}
	=
	\partial_{v_1}^{\beta_{v,1}}\partial_{v_2}^{\beta_{v,2}}\partial_{v_3}^{\beta_{v,3}}\partial_{x_1}^{\beta_{x,1}}\partial_{x_2}^{\beta_{x,2}}\partial_{x_3}^{\beta_{x,3}}.
	\]
In the case of a vector $E=(E_1,E_2,E_3)$, the derivatives are applied to each coordinate separately and the result remains a vector, i.e.
	\[
	D_x^{\beta_x}E
	=
	\left(D_x^{\beta_x}E_1,D_x^{\beta_x}E_2,D_x^{\beta_x}E_3\right).
	\]
We can further define a relation $\preceq$ on multi-indices. More specifically, for $\beta,\gamma\in\N_0^3$ we write
	\[
	\gamma \preceq\beta\qquad\text{whenever}\qquad\gamma_i\leq\beta_i \quad \text{for all} \ i=1,2,3,
	\]
and, in this case, we define
	\[
	\beta-\gamma:=(\beta_1-\gamma_1,\beta_2-\gamma_2,\beta_3-\gamma_3),
	\]
which is also an element of $\N_0^3$. Lastly, the magnitude of a multi-index is the sum of its elements; namely, for $\beta=(\beta_1,\beta_2,\beta_3)$, we define
	\[
	|\beta|=\beta_1+\beta_2+\beta_3.
	\]
Note that for $\eta\preceq\gamma\preceq\beta$ we have
$$	|\beta-\gamma|+|\gamma-\eta|
	=
	\sum_{i=1}^3(\beta_i-\gamma_i)+\sum_{i=1}^3(\gamma_i-\eta_i)\\
	=
	\sum_{i=1}^3(\beta_i-\eta_i)\\
	=
	|\beta-\eta|.
$$

Occasionally, we use an integer superscript, such as $D_x^k$, where $k\in\N$. This symbol means that we are taking an \emph{arbitrary} $k$th order derivative: that is, for $j\in\{1,\dots,k\}$ and some choice $i_j\in\{1,2,3\}$,
	\[
	D^k_x=\partial_{x_{i_1}}\cdots \partial_{x_{i_k}}.
	\]
This holds similarly for an arbitrary $k$th order derivative $D^k_v$ in the $v$ variable.

%We note that when the superscript of a derivative term is a scalar rather than a multi-index, e.g. $D^k_x$ for $k \in \mathbb{N}$, then this term represents an arbitrary derivative of that order, e.g. any $k$th order derivative.

%

%With suitable decay of the field and its derivatives, we now study the behavior of the translated characteristics and establish a measure-preserving property as for the characteristics of \eqref{VP}.
%
%\begin{lemma}
%\end{lemma}
%
%\begin{proof}
%\end{proof}

Prior to diving into our sequence of lemmas, we begin by recalling some important properties of the translated distribution functions from \cite{Pankavich2022}.
First, we note that the measure of the phase space support of each $g^\alpha$ is preserved in time and recall an estimate on the spatial support of each $g^\alpha$ that will ultimately provide a uniform-in-time bound.
\begin{lemma}[\cite{Pankavich2022}]
\label{Sg}
For every $\alpha = 1, ..., N$ and $t \geq 0$, we have 
$$| \bS(t) | = | \bS(0) |.$$
Additionally, the measure of the maximal spatial support satisfies
$$\mu(t) \lesssim \left( 1 + \int_1^t s \| E(s) \|_\infty \ ds \right)^3.$$
\end{lemma}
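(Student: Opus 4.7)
The plan is to prove the two assertions separately, relying on the observation that \eqref{VPg} is a transport equation whose characteristics are well-defined globally.

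For the first claim, I would exploit the fact that the $(x,v)$-change of variables $(x,v)\mapsto(x-vt,v)$ has unit Jacobian, so $|\bS(t)|$ equals the measure of the support of $f^\alpha(t,\cdot,\cdot)$. The original Vlasov flow is Hamiltonian (the vector field $(v, \tfrac{q_\alpha}{m_\alpha}E)$ is divergence-free in $(x,v)$), hence measure-preserving, which propagates $|\mathrm{supp}\, f^\alpha(t)|=|\mathrm{supp}\, f_0^\alpha|$. Combining the two steps gives $|\bS(t)|=|\bS(0)|$. Alternatively, one can verify directly that the vector field appearing in \eqref{VPg}, namely $\bigl(-\tfrac{q_\alpha}{m_\alpha}tE(t,x+vt),\,\tfrac{q_\alpha}{m_\alpha}E(t,x+vt)\bigr)$, is divergence-free in $(x,v)$: the $x$-divergence contributes $-\tfrac{q_\alpha}{m_\alpha}t(\nabla_x\cdot E)(t,x+vt)$, while the $v$-divergence contributes $+\tfrac{q_\alpha}{m_\alpha}t(\nabla_x\cdot E)(t,x+vt)$ by the chain rule, and these cancel.

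For the second claim, I would write the characteristic system for \eqref{VPg}, namely
\begin{equation*}
\dot X(s) = -\frac{q_\alpha}{m_\alpha}\, s\, E\bigl(s, X(s) + V(s)s\bigr),\qquad \dot V(s) = \frac{q_\alpha}{m_\alpha}\, E\bigl(s, X(s) + V(s)s\bigr),
\end{equation*}
with initial data $(X(0),V(0)) = (x_0,v_0) \in \bS(0)$. Taking absolute values and integrating yields the displacement bound
\begin{equation*}
|X(t) - x_0| \;\leq\; \frac{|q_\alpha|}{m_\alpha}\int_0^t s\,\|E(s)\|_\infty\,ds.
\end{equation*}
Since $\bS(0)$ is compact (hence has bounded spatial projection of some diameter $R_0$), any $x$ with $(x,v)\in\bS(t)$ satisfies $\mathrm{dist}(x,\bS_x(0))\lesssim \int_0^t s\|E(s)\|_\infty ds$, uniformly in $v$. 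Consequently the diameter of $\bS_x(t,v)$ is bounded by $R_0+C\int_0^t s\|E(s)\|_\infty ds$, and bounding a three-dimensional volume by the cube of the diameter yields $\mu(t)\lesssim \bigl(1+\int_1^t s\|E(s)\|_\infty ds\bigr)^3$, after absorbing constants and the finite contribution from $s\in[0,1]$ into the leading $1$.

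I do not anticipate a serious obstacle: the main subtlety is simply to track that the spatial displacement along characteristics picks up the extra factor of $s$ (as compared with the original Vlasov system) because of the coefficient $tE(t,x+vt)$ in \eqref{VPg}, which is precisely what produces the factor $s$ inside the integral. Everything else reduces to standard estimates on the Hamiltonian flow.
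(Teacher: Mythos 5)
Your proposal is correct and follows the same strategy the paper attributes to \cite{Pankavich2022}: the divergence-free (measure-preserving) property of the flow associated with \eqref{VPg} gives $|\bS(t)|=|\bS(0)|$, and bounding the spread of the translated spatial characteristics $\dot X(s)=-\tfrac{q_\alpha}{m_\alpha}\,s\,E(s,X+Vs)$ yields the cubed-diameter estimate for $\mu(t)$, with the $[0,1]$ portion of the integral absorbed into the constant via the global existence theorem.
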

\begin{proof}
The proof in \cite{Pankavich2022} follows from an analysis of the evolution of $g^\alpha$ in phase space; specifically, the divergence-free property of the Vlasov flow implies the first statement. The result for $\mu(t)$ simply follows from estimating the rate at which the translated spatial characteristics separate.
\end{proof}

Due to the assumption that $\rho_{0,\infty} \equiv 0$, we find 
$$\Vert E(t) \Vert_\infty \lesssim t^{-3}$$
from Theorem \ref{oldT2}.
Because the field is bounded on any bounded time interval, inserting this estimate within Lemma \ref{Sg} gives
\begin{equation}
\label{Sxg}
\mu(t) \lesssim 1.
\end{equation}
Hence, both the phase space support and spatial support of $g^\alpha(t)$ are uniformly bounded for every $\alpha = 1, .., N$.

Next, we state an important (and well-known) estimate for the electric field.
\begin{lemma}
\label{LE}
For any $\phi \in C_c(\bfR^3)$, there is $C > 0$ such that
$$\| \nabla (\Delta)^{-1} \phi \|_\infty \leq C \| \phi \|_\infty.$$
\end{lemma}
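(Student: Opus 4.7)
The plan is to represent $\nabla(\Delta)^{-1}\phi$ as a convolution against the Newton kernel,
$$\nabla(\Delta)^{-1}\phi(x) = \frac{1}{4\pi}\int_{\bfR^3} \frac{x-y}{|x-y|^3}\phi(y)\,dy,$$
so that the pointwise estimate reduces to controlling the singular integral $\int |\phi(y)|\,|x-y|^{-2}\,dy$. Since $\phi\in C_c(\bfR^3)$, both $\|\phi\|_\infty$ and $\|\phi\|_1$ are finite, which opens the door to the classical near/far splitting at a free radius $R>0$ chosen at the end of the argument.

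Concretely, I would write
$$|\nabla(\Delta)^{-1}\phi(x)| \leq \frac{1}{4\pi}\int_{|x-y|\leq R}\frac{|\phi(y)|}{|x-y|^2}\,dy + \frac{1}{4\pi}\int_{|x-y|> R}\frac{|\phi(y)|}{|x-y|^2}\,dy.$$
For the first term, bounding $|\phi|$ by $\|\phi\|_\infty$ and integrating in spherical coordinates yields a contribution of order $R\,\|\phi\|_\infty$. For the second term, bounding $|x-y|^{-2}$ by $R^{-2}$ and integrating against $|\phi|$ yields $(4\pi R^2)^{-1}\|\phi\|_1$. Optimizing the balance $R \sim (\|\phi\|_1/\|\phi\|_\infty)^{1/3}$ then produces the familiar interpolation inequality $\|\nabla(\Delta)^{-1}\phi\|_\infty \lesssim \|\phi\|_\infty^{2/3}\,\|\phi\|_1^{1/3}$, and the compact support of $\phi$ gives $\|\phi\|_1 \leq |\mathrm{supp}(\phi)|\,\|\phi\|_\infty$, so that absorbing the support-dependent factor into the constant $C$ yields the claimed bound.

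There is no real obstacle here, as the argument is textbook; the only subtlety worth flagging is that the constant $C$ inherits an implicit dependence on the size of $\mathrm{supp}(\phi)$. This is harmless in every subsequent application of the lemma, since it will be invoked on charge-density-like objects built from $g^\alpha$, whose spatial supports are uniformly controlled in time by Lemma \ref{Sg} together with the bound \eqref{Sxg}.
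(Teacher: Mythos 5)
Your argument is correct and essentially the same as the paper's: the paper cites the interpolation inequality $\|\nabla(\Delta)^{-1}\phi\|_\infty \lesssim \|\phi\|_1^{1/3}\|\phi\|_\infty^{2/3}$ from H\"ormander and then bounds $\|\phi\|_1 \leq |\mathrm{supp}(\phi)|\,\|\phi\|_\infty$ exactly as you do. The only difference is that you re-derive the interpolation inequality via the near/far splitting rather than quoting it, and your closing remark about the support-dependence of $C$ being controlled by \eqref{Sxg} matches how the paper applies the lemma.
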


\begin{proof}
First we note that for $\phi \in C_c(\bfR^3)$, we have
$$ \| \phi \|_1 \leq \left |\mathrm{supp}(\phi) \right | \| \phi \|_\infty \leq C \| \phi \|_\infty$$
due to the compact support of $\phi$.
Finally, the proof is completed by using this within the classical estimate (cf. \cite{Hormander}, Lemma 4.5.4)
$$\| \nabla (\Delta)^{-1} \phi \|_\infty \leq C \| \phi \|_1^{1/3}  \| \phi \|_\infty^{2/3}$$
for any $\phi \in C_c(\bfR^3) \subset L^1(\bfR^3) \cap L^\infty(\bfR^3)$.
\end{proof}

With this, we note that, as the spatial support of $g^\alpha(t,x,v)$ is uniformly bounded in time by \eqref{Sxg}, the lemma implies
$$\| E(t) \|_\infty = \| \nabla_x (\Delta_x)^{-1} \rho(t) \|_\infty \lesssim\| \rho(t) \|_\infty,$$
with analogous results for field derivatives and any function dependent upon the spatial support of a particle distribution.

\subsection{Preliminary Bounds on the Particle Density and the Field}
The next lemma provides an initial estimate of $k$th order field derivatives in terms of the $k$th order derivatives of the distribution function.
In order to precisely capture this dependence, we first define the function 
$$ \ln^*(x) = \begin{cases}
\ln(x), & x > 1\\
0, & x \leq 1.
\end{cases}
$$

\begin{lemma}
\label{LDE}
For any $k \in \mathbb{N}$, if
$\mcG^{k-1}_v(t) \lesssim 1$,
then
$$ \Vert \nabla^k_xE(t) \Vert_\infty \lesssim t^{-2-k} \left(1 + \ln^* \left( \max_{\alpha = 1, ..., N} \Vert \nabla^k_v g^\alpha(t) \Vert_\infty \right) \right). $$
\end{lemma}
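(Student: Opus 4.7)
The plan is to deduce the bound from the integral representation \eqref{E} by moving $k$ spatial derivatives onto $g^\alpha$, recognizing the resulting $\xi$-integral as a Newtonian convolution in the velocity variable, and then invoking a single logarithmic Calder\'on--Zygmund estimate after one integration by parts.

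First, applying $\partial_{x_j}$ to \eqref{E} through the chain rule on the velocity argument $(x-y)/t - \xi$ produces $\partial_{v_j} g^\alpha$ together with a factor $1/t$; iterating this $k$ times gives
\[
\nabla_x^k E(t,x) = \frac{1}{4\pi t^{2+k}} \sum_{\alpha=1}^N q_\alpha \iint \frac{\xi}{|\xi|^3}\, \nabla_v^k g^\alpha\!\left(t, y, \tfrac{x-y}{t} - \xi\right) dy\, d\xi,
\]
with differentiation under the integral justified by the uniform-in-$t$ compact support of $g^\alpha$ guaranteed by Lemma \ref{Sg} together with \eqref{Sxg}. Changing variables $w = (x-y)/t - \xi$ in the inner integral identifies it as a Newtonian potential:
\[
\int \frac{\xi}{|\xi|^3}\, \nabla_v^k g^\alpha\!\left(t,y, \tfrac{x-y}{t} - \xi\right) d\xi = 4\pi\, \nabla_w (\Delta_w)^{-1}\!\bigl[\nabla_v^k g^\alpha(t, y, \cdot)\bigr]\!\left(\tfrac{x-y}{t}\right).
\]
Since \eqref{Sxg} confines the $y$-integration to a set of $t$-independent measure, taking the $L^\infty$ norm in $x$ reduces the task to estimating $\sup_{y} \bigl\|\nabla_w(\Delta_w)^{-1} \nabla_v^k g^\alpha(t,y,\cdot)\bigr\|_\infty$.

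Next, I would write $\nabla_v^k g^\alpha = \partial_{v_j}\phi$, where $\phi$ is a specific $(k-1)$-th order velocity derivative of $g^\alpha(t,y,\cdot)$, and commute $\partial_{v_j} = \partial_{w_j}$ past $(\Delta_w)^{-1}$ to obtain
\[
\nabla_w(\Delta_w)^{-1}\nabla_v^k g^\alpha = \nabla_w \partial_{w_j}(\Delta_w)^{-1}\phi,
\]
which is a Calder\'on--Zygmund-type operator applied to $\phi$. The hypothesis $\mcG^{k-1}_v(t)\lesssim 1$ supplies $\|\phi\|_\infty \lesssim 1$, while by definition $\|\nabla_w \phi\|_\infty \le \max_\alpha \|\nabla_v^k g^\alpha(t)\|_\infty$. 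Then the classical logarithmic $L^\infty$ bound for compactly supported data,
\[
\bigl\|\nabla_w \partial_{w_j}(\Delta_w)^{-1}\phi\bigr\|_\infty \lesssim \|\phi\|_\infty\!\left(1 + \log\!\left(e + \tfrac{\|\nabla\phi\|_\infty}{\|\phi\|_\infty}\right)\right),
\]
combined with the elementary inequality $\log(e+A)\lesssim 1+\ln^*(A)$, delivers the stated estimate.

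The main obstacle is the logarithmic Calder\'on--Zygmund bound used in the last step: the operator $\nabla\partial_{w_j}(\Delta_w)^{-1}$ is \emph{not} bounded on $L^\infty$, and the log factor is precisely what must be captured. I would prove it in the usual way, by splitting the principal-value integral at radius $R \sim \|\phi\|_\infty/\|\nabla\phi\|_\infty$: the near piece is controlled using the $C^1$ bound on $\phi$ and the cancellation $\phi(z) - \phi(w) = O(|z-w|\|\nabla\phi\|_\infty)$, while the far piece requires the compact support of $\phi$ \emph{uniformly in $t$ and $y$}, which is exactly what \eqref{Sxg} supplies. Without this uniform support control the far-field contribution would introduce a $t$-dependent constant that would destroy the desired rate.
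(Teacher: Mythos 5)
Your proposal is correct and mirrors the paper's argument in all essentials: differentiate \eqref{E} to produce a factor $t^{-2-k}$ and $\nabla_v^k g^\alpha$, peel off one derivative (reducing to $D_v^{k-1}g^\alpha$, controlled by the hypothesis $\mcG_v^{k-1}\lesssim 1$), recognize the remaining operator as a second-order Riesz transform, and obtain the log factor by splitting the singular integral at a radius comparable to the reciprocal of $\max_\alpha\|\nabla_v^k g^\alpha(t)\|_\infty$, with the far field controlled by compact support. The only cosmetic difference is that the paper first integrates over $y$ to form a single function $\mcA(t,z)=\sum_\alpha q_\alpha\int D_v^{k-1}g^\alpha(t,y,z-y/t)\,dy$ and applies the dyadic split once to $\mcA$, whereas you apply the Calder\'on--Zygmund log bound pointwise in $y$ and then integrate over the bounded $y$-support; the two orderings are interchangeable here. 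One small imprecision in citation: the far-field/$L^1$ control requires boundedness of the $v$-support of $g^\alpha(t,y,\cdot)$ (uniformly in $t,y$), which comes from the uniform compactness of the phase-space support $\bS(t)$ (Lemma \ref{Sg} and the characteristic estimates), not from \eqref{Sxg}, which only bounds the $x$-slices.
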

\begin{proof}
We begin by taking any $k$th-order derivative with respect to $x$ of the field representation \eqref{E} so that
$$ D^{k}_x E(t,x) = \frac{1}{4\pi t^{2+k}} \int \frac{\xi}{|\xi|^3} \sum_{\alpha = 1}^N q_\alpha \int D^{k}_v g^\alpha \left(t, y, \frac{x-y}{t} - \xi \right)  dy \, d\xi,$$
or written another way
$$  t^{2+k}D^{k}_x E(t,x) = \frac{1}{4\pi} \int \frac{\xi}{|\xi|^3} \partial_{z_i}\mcA \left(t,  \frac{x}{t} - \xi \right)  d\xi$$
for some $i \in \{1, 2, 3\}$ where 
$$\mcA(t,z) = \sum_{\alpha = 1}^N q_\alpha \int D^{k-1}_v g^\alpha \left(t, y, z -\frac{y}{t} \right)  dy$$
and the $k$th order derivative is decomposed into the composition of first and $(k\!-\!1)$st order derivatives via $D_v^k = \partial_{v_i} D_v^{k-1}$.
Because $g^\alpha(t,x,v)$ has bounded support in $x$ due to \eqref{Sxg} and in the phase space $(x,v)$ due to Lemma \ref{Sg}, we find
\begin{equation}
\label{H1}
\Vert \mcA(t) \Vert_1 + \Vert \mcA(t) \Vert_\infty \lesssim \max_{\alpha = 1, ..., N} \Vert D^{k-1}_v g^\alpha(t) \Vert_\infty \lesssim \mcG^{k-1}_v(t) \lesssim 1
\end{equation}
and
\begin{equation}
\label{H2}
\Vert \partial_{z_i}\mcA(t) \Vert_\infty \lesssim \max_{\alpha = 1, ..., N} \Vert \nabla^k_v g^\alpha(t) \Vert_\infty.
\end{equation}
Then, using the identity
$$\partial_{z_i}\mcA \left(t,  \frac{x}{t} - \xi \right) = - \partial_{\xi_i} \left [ \mcA \left(t,  \frac{x}{t} - \xi \right) \right ]$$ 
with an integration by parts away from the singularity and taking some $0<d<R<\infty$ to be chosen later, we find
\begin{eqnarray*}
t^{2+ k}  \left | D^{k}_x E^j(t,x) \right | & \leq & \left | \int_{|\xi| < d} \frac{\xi_j}{|\xi|^3} \partial_{z_i}\mcA \left(t, \frac{x}{t} - \xi \right) d\xi \right | + \left | \int_{|\xi| = d} \frac{\xi_j\xi_i}{|\xi|^4} \mcA \left(t, \frac{x}{t} - \xi \right) dS_\xi \right |\\
& &  + \left | \int_{d < |\xi| < R}  \partial_{\xi_i}  \left( \frac{\xi_j}{|\xi|^3} \right)  \mcA \left(t, \frac{x}{t} - \xi \right) d\xi \right |  + \left | \int_{|\xi| > R} \partial_{\xi_i}  \left( \frac{\xi_j}{|\xi|^3} \right)  \mcA \left(t, \frac{x}{t} - \xi \right) d\xi \right |\\
& =: &  I + II + III + IV.
\end{eqnarray*}
The estimate of $I$ merely uses \eqref{H2} so that
$$I \lesssim d \Vert \partial_{z_i}\mcA(t) \Vert_\infty \lesssim d \max_{\alpha = 1, ..., N} \Vert \nabla^k_v g^\alpha(t) \Vert_\infty .$$
We use \eqref{H1} to estimate $II$, $III$, and $IV$, which yields
$$II \lesssim \| \mcA(t) \|_\infty \left( \int_{|\xi| = d} |\xi|^{-2} dS_\xi \right) \lesssim 1,$$
$$III \lesssim \int_{d < |\xi| < R} |\xi|^{-3}  \mcA \left(t, \frac{x}{t} - \xi \right)  \ d\xi  \lesssim \ln \left(\frac{R}{d}\right)  \| \mcA(t)\|_\infty \lesssim \ln \left(\frac{R}{d}\right),$$
and
$$IV \lesssim \int_{|\xi| > R} |\xi|^{-3}  \mcA \left(t, \frac{x}{t} - \xi \right) d\xi  \lesssim R^{-3} \| \mcA(t)\|_1 \lesssim R^{-3},$$
respectively.
Collecting these estimates yields
$$t^{2+k} \Vert D^k_xE(t) \Vert_\infty \lesssim 1 + d \max_{\alpha = 1, ..., N}\Vert \nabla^k_v g^\alpha(t) \Vert_\infty + \ln \left(\frac{R}{d}\right) + R^{-3}.$$
We choose $R=2$ and
	\[
	d(t) = \min \left \{1, \left ( \max_{\alpha = 1, ..., N}\Vert \nabla^k_v g^\alpha (t) \Vert_\infty \right)^{-1} \right \}.
%\begin{cases}
%1,&\quad\text{if }\displaystyle \max_{\alpha = 1, ..., N} \Vert \nabla^k_v g^\alpha(t) \Vert_\infty \leq 1,\\
%\left( \displaystyle \max_{\alpha = 1, ..., N}\Vert \nabla^k_v g^\alpha (t) \Vert_\infty \right)^{-1}, &\quad\text{if }\displaystyle \max_{\alpha = 1, ..., N} \Vert \nabla^k_v g^\alpha(t) \Vert_\infty > 1.
%\end{cases}
	\]
Then for all $t \geq 0$, we have 
$$d(t) \cdot \left (\max_{\alpha = 1, ..., N}\Vert \nabla^k_v g^\alpha(t) \Vert_\infty \right ) \leq 1$$ 
and 
$$\ln \left(\frac{1}{d(t)}\right)=\ln^*\left(\max_{\alpha = 1, ..., N}\Vert \nabla^k_v g^\alpha (t) \Vert_\infty\right),$$ 
so that the previous estimate yields
%
%For $\displaystyle \max_{\alpha = 1, ..., N} \Vert \nabla^k_v g^\alpha(t) \Vert_\infty \leq 1$,  we choose $d = 1$ and $R = 2$, so that 
%$$t^{2+k} \Vert D^k_xE(t) \Vert_\infty \lesssim 1.$$
%Alternatively, for $\displaystyle \max_{\alpha = 1, ..., N}\Vert \nabla^k_v g^\alpha(t) \Vert_\infty > 1$, choosing $d = \left( \displaystyle \max_{\alpha = 1, ..., N}\Vert \nabla^k_v g^\alpha (t) \Vert_\infty \right)^{-1}$ with $R= 1$ yields
%$$t^{2+k} \Vert D^k_xE(t) \Vert_\infty \lesssim 1 + \ln\left( \max_{\alpha = 1, ..., N} \Vert \nabla^k_v g^\alpha (t) \Vert_\infty \right).$$
%Combining these cases gives
$$t^{2+k} \Vert D^k_xE(t) \Vert_\infty \lesssim 1 + \ln^*\left( \max_{\alpha = 1, ..., N} \Vert \nabla^k_v g^\alpha (t) \Vert_\infty \right).$$
Finally, we have obtained the same bound for every such derivative of the field, yielding the stated result.
\end{proof}

Next, we control derivatives of the charge density in terms of those of the distribution function.

\begin{lemma}
\label{dvkrho}
For every $k \in \mathbb{N}$, we have 
$$\infnorm{\nabla^k_x \rho(t)}\lesssim t^{-3-k} \mcG_v^k(t).$$
\end{lemma}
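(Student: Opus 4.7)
The plan is to differentiate the representation formula for $\rho(t,x)$ given in \eqref{rho} directly and transfer the $x$-derivatives onto the velocity argument of $g^\alpha$, picking up a factor of $t^{-1}$ each time. Starting from
	\[
	\rho(t,x) = t^{-3} \sum_{\alpha = 1}^N q_\alpha \int g^\alpha\!\left(t, y, \frac{x-y}{t}\right) dy,
	\]
each partial derivative $\partial_{x_i}$ acting inside the integral produces a $t^{-1}$ together with a $\partial_{v_i}$ on $g^\alpha$ (evaluated at $(t,y,(x-y)/t)$). Iterating this $k$ times, any $k$th order $x$-derivative $D_x^k$ yields a representation of the form
	\[
	D^k_x \rho(t,x) = t^{-3-k} \sum_{\alpha = 1}^N q_\alpha \int \bigl(D_v^k g^\alpha\bigr)\!\left(t, y, \frac{x-y}{t}\right) dy,
	\]
for the corresponding multi-index of length $k$.

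Next I would take the sup norm. The integrand is supported in $y$ within the spatial support of $g^\alpha(t,\cdot,(x-y)/t)$, which by \eqref{Sxg} has measure $\mu(t)\lesssim 1$ uniformly in $t$ and $v$. Therefore pulling the sup of $D_v^k g^\alpha$ out of the integral and bounding the resulting $y$-volume by a constant gives
	\[
	\left| D^k_x \rho(t,x)\right|
	\lesssim
	t^{-3-k} \max_{\alpha=1,\dots,N} \|D_v^k g^\alpha(t)\|_\infty
	\lesssim
	t^{-3-k}\, \mcG_v^k(t),
	\]
using the definition \eqref{eq:G^k_v} of $\mcG_v^k(t)$. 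Since the bound is uniform in $x$ and identical for every choice of $k$th order derivative, taking the maximum over all such derivatives yields the claimed estimate.

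There is no real obstacle here: the lemma is essentially a bookkeeping statement about how the change of variables \eqref{eq:g} converts $x$-derivatives of $\rho$ into $v$-derivatives of $g^\alpha$ with a power-of-$t$ gain. The only nontrivial input is the uniform-in-time bound on the spatial support of $g^\alpha$ from \eqref{Sxg} (itself a consequence of Lemma \ref{Sg} together with the decay $\|E(t)\|_\infty \lesssim t^{-3}$ guaranteed by Theorem \ref{oldT2}), which is what makes the $dy$ integral harmless.
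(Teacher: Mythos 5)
Your proposal is correct and follows the paper's proof essentially verbatim: differentiate the representation \eqref{rho}, trade each $x$-derivative for a $v$-derivative on $g^\alpha$ together with a factor $t^{-1}$, then use the uniform-in-time spatial support bound \eqref{Sxg} to control the $y$-integral. The paper writes the intermediate identity with the argument $x+vt$ rather than $x$, but since the final step takes a supremum over $x$ this is a purely cosmetic difference.
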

\begin{proof}
Taking any $k$th order derivative with respect to $x$ of $\rho(t,x)$ via the representation \eqref{rho} yields
$$D^k_x \rho(t,x+vt)=t^{-3-k}\sum_{\alpha=1}^N q_\alpha \int D^k_v g^\alpha\left(t,y,v+\frac{x-y}{t}\right)dy,$$
and, as the spatial support of $g^\alpha$ is uniformly bounded for each $\alpha = 1, ..., N$ by \eqref{Sxg}, we take supremums so that 
the gradient satisfies
$$\infnorm{\nabla^k_x \rho(t)}\lesssim t^{-3-k} \mcG_v^k(t).$$
\end{proof}

\subsection{Bounds on Derivatives of $F$}

In order to estimate field derivatives later, we will first need to determine the large time rate of convergence of higher-order derivatives of $F^{\alpha, 0}(t,v)$. 

\begin{lemma}
\label{dvkFalpha0}
	For every $\alpha = 1, ..., N$ and $k \in \N_0$, we have 
	\begin{equation*}
		\infnorm{\nabla_v^{k} F^{\alpha,0}(t) - \nabla_v^{k} F^{\alpha,0}_\infty} \lesssim \int_t^\infty  \sum_{j=0}^{k} s^{k-j}\biggl [s \infnorm{\nabla_x^{k-j} \rho(s)}\mcG_v^j(s) + \infnorm{\nabla_x^{k-j}E(s)}\mcG_v^{j+1}(s) \biggr ]ds.
	\end{equation*}
	under the assumption that the integral on the right side is finite.
\end{lemma}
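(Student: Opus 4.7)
The plan is to derive a differential identity for $\partial_t \nabla_v^k F^{\alpha,0}(t,v)$ and then integrate from $t$ to $\infty$. Using the change of variables $x \mapsto x - vt$ in the defining integral for $F^{\alpha,0}$, one obtains the equivalent representation
$$
F^{\alpha,0}(t,v) = \int_{\mathbb{R}^3} g^\alpha(t,x,v)\, dx,
$$
so that $\nabla_v^\beta F^{\alpha,0}(t,v) = \int D_v^\beta g^\alpha(t,x,v)\, dx$. The strategy is then to differentiate in time, invoke \eqref{VPg}, and integrate by parts in the transport term.

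First I would fix a multi-index $\beta$ with $|\beta|=k$, apply $D_v^\beta$ to \eqref{VPg}, and use Leibniz together with the identity $D_v^\beta [E(t,x+vt)] = t^{|\beta|} (D_x^\beta E)(t,x+vt)$. This gives an expression for $\partial_t D_v^\beta g^\alpha$ as a linear combination, over $\gamma \preceq \beta$, of terms of the form $t^{|\gamma|+1}(D_x^\gamma E)(t,x+vt)\cdot\nabla_x D_v^{\beta-\gamma}g^\alpha$ and $t^{|\gamma|}(D_x^\gamma E)(t,x+vt)\cdot\nabla_v D_v^{\beta-\gamma}g^\alpha$. I would then integrate in $x$: boundary terms vanish because $g^\alpha$ has compact support in $x$ (Lemma~\ref{Sg} together with \eqref{Sxg}), and for the transport term integration by parts and Poisson's equation yield $\nabla_x\cdot(D_x^\gamma E) = D_x^\gamma\rho$. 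The result is
\begin{equation*}
\partial_t \nabla_v^\beta F^{\alpha,0}(t,v) = -\frac{q_\alpha}{m_\alpha}\sum_{\gamma\preceq\beta}\binom{\beta}{\gamma}\!\int\!\Bigl[t^{|\gamma|+1}(D_x^\gamma\rho)(t,x+vt)D_v^{\beta-\gamma}g^\alpha + t^{|\gamma|}(D_x^\gamma E)(t,x+vt)\cdot \nabla_v D_v^{\beta-\gamma}g^\alpha\Bigr]dx.
\end{equation*}

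Next, I would take absolute values and use the uniform bound \eqref{Sxg} on the spatial support of $g^\alpha$ to replace each $x$-integral by an $L^\infty$-bound. This produces, for an arbitrary $k$th-order derivative and after reindexing via $j = k - |\gamma|$,
$$
\bigl\|\partial_t \nabla_v^k F^{\alpha,0}(t)\bigr\|_\infty \lesssim \sum_{j=0}^{k} s^{k-j}\bigl[s\,\|\nabla_x^{k-j}\rho(s)\|_\infty \mcG_v^j(s) + \|\nabla_x^{k-j}E(s)\|_\infty \mcG_v^{j+1}(s)\bigr]\Big|_{s=t},
$$
where the $\mcG_v$ factors arise from controlling $D_v^{\beta-\gamma} g^\alpha$ and $\nabla_v D_v^{\beta-\gamma}g^\alpha$ via \eqref{eq:G^k_v}. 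Integrating this bound in time from $t$ to $\infty$ and invoking the assumed finiteness of the right-hand side to identify $\nabla_v^k F^{\alpha,0}(t) - \nabla_v^k F^{\alpha,0}_\infty$ with $-\int_t^\infty \partial_s \nabla_v^k F^{\alpha,0}(s)\, ds$ yields the claimed estimate.

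The main obstacle is a bookkeeping one: organizing the Leibniz sum so that each term in $\partial_t D_v^\beta g^\alpha$ is correctly identified as a product of an $x$-derivative of $E$ (or, after integration by parts, of $\rho$) paired with the appropriate $v$-derivative of $g^\alpha$, and ensuring that the power of $t$ arising from the chain-rule factor $D_v^\beta E(t,x+vt) = t^{|\beta|} D_x^\beta E(t,x+vt)$ aligns with the power in the stated inequality after reindexing. The remaining steps (integration by parts, passing to $L^\infty$ using the uniformly bounded support, and integrating in $t$) are all standard once the differential identity is in place.
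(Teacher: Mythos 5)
Your proposal is correct and follows essentially the same route as the paper's proof: differentiate $F^{\alpha,0}$ in time using \eqref{VPg}, convert the transport term to a $\rho$-term by integrating by parts via $\nabla_x\cdot E=\rho$, apply $D_v^\beta$ with Leibniz and the chain-rule identity $D_v^\gamma[E(t,x+vt)]=t^{|\gamma|}(D_x^\gamma E)(t,x+vt)$, bound using the uniform compact support \eqref{Sxg}, and integrate from $t$ to $\infty$. The only difference is cosmetic — you apply $D_v^\beta$ to the Vlasov equation before integrating in $x$ and integrating by parts, whereas the paper integrates by parts at the level of $\partial_t F^{\alpha,0}$ and then differentiates in $v$ — but after the relabeling $\gamma\mapsto\beta-\gamma$ the resulting differential identity and the final bound are identical.
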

\begin{proof}
	We first take the time derivative of $F^{\alpha,0}(t)$ and write $\partial_tg^\alpha$ using \eqref{VPg} to find
	\begin{align*}
		\partial_t F^{\alpha,0}(t,v)&= \int \partial_t g^\alpha(t,x,v) \ dx\\
		&=\int \frac{q_\alpha}{m_\alpha}\Big[ t E(t,x+vt)\cdot \nabla_x g^\alpha(t,x,v) - E(t,x+vt)\cdot\nabla_v g^\alpha(t,x,v) \Big]dx.
	\end{align*}
	Integrating by parts within the first term, the $x$-derivative is transferred from $g^\alpha$ to $E$ and simplified 
	using \[\nabla_x\cdot E(t,x)=\rho(t,x).\]
	The boundary term is zero due to the compact spatial support of $g^\alpha$ from \eqref{Sxg}, and this first term becomes
	\begin{align*}
		\int tE(t,x+vt)\cdot \nabla_x g^\alpha(t,x,v) \, dx=-\int t\rho(t,x+vt)g^{\alpha}(t,x,v) \, dx.
	\end{align*}
	Next, we take any $k$th order velocity derivative, denoted by $D^\beta_v$ with $|\beta| = k$, of the original equality, yielding 
	\begin{align*} 
		&\partial_t D_v^\beta F^{\alpha,0}(t,v) = -\frac{q_\alpha}{m_\alpha} \int \Big[ t D_v^\beta \big(\rho(t,x+vt)g^{\alpha}(t,x,v) \big) + D_v^\beta \big( E(t,x+vt)\cdot\nabla_v g^\alpha(t,x,v)\big) \Big] dx\\
		&\qquad =-\frac{q_\alpha}{m_\alpha} \int \sum_{j=0}^{k}\binom{k}{j} \sum_{\substack{|\gamma|=j\\ \gamma\preceq\beta}} \Big[ t D_v^{\beta-\gamma} \rho(t,x+vt)D_v^\gamma g^{\alpha}(t,x,v) + D_v^{\beta-\gamma}E(t,x+vt)\cdot D_v^\gamma \nabla_v g^\alpha(t,x,v) \Big] dx\\
		&\qquad =-\frac{q_\alpha}{m_\alpha} \int \sum_{j=0}^{k}\binom{k}{j} \sum_{\substack{|\gamma|=j\\ \gamma\preceq\beta}} \Big[ t^{1+|\beta-\gamma|} D_x^{\beta-\gamma} \rho(t,x+vt)D_v^\gamma g^{\alpha}(t,x,v) \\
		&\qquad \hspace{5cm}+ t^{|\beta-\gamma|}D_x^{\beta-\gamma}E(t,x+vt)\cdot D_v^\gamma \nabla_v g^\alpha(t,x,v) \Big] dx.
	\end{align*} 
	Taking the supremum over $x,v\in\R^3$ and using $|\beta-\gamma|=k-j$, we find
	\begin{align*}
		\infnorm{\partial_t D_v^\beta F^{\alpha,0}(t)} %&\leq \left|\frac{q_\alpha}{m_\alpha}\right| \sum_{|\beta|=k}\sum_{j=0}^{k}\binom{k}{j} \sum_{\substack{|\gamma|=j\\ \gamma\preceq\beta}} \Big[ t^{1+|\beta-\gamma|} \infnorm{D_x^{|\beta-\gamma|} \rho(t)}\mcG_v^j(t) + t^{|\beta-\gamma|}\infnorm{D_x^{|\beta-\gamma|}E(t)}\mcG_v^{j+1}(t)\Big]\\
		&\lesssim \sum_{j=0}^k \biggl [ t^{1+k-j} \infnorm{\nabla_x^{k-j} \rho(t)}\mcG_v^j(t) + t^{k-j}\infnorm{\nabla_x^{k-j}E(t)}\mcG_v^{j+1}(t) \biggr ].
	\end{align*} 
	Hence, for $\tau \geq t$ we can write
	\[\infnorm{D_v^{\beta} F^{\alpha,0}(t) -D_v^{\beta} F^{\alpha,0}(\tau)} = \infnorm{\int_t^\tau \partial_t D_v^{\beta} F^{\alpha,0}(s) \ ds }\leq \int_t^\tau \infnorm{\partial_t D_v^{\beta} F^{\alpha,0}(s)} ds . \]
	Then, as long as the integrand decays sufficiently fast as $s \to \infty$, we let $\tau \rightarrow \infty$ and find
	$$ \infnorm{D_v^{\beta} F^{\alpha,0}(t) - D_v^{\beta} F^{\alpha,0}_\infty} \lesssim \int_t^\infty  \sum_{j=0}^{k} s^{k-j}\biggl [s \infnorm{\nabla_x^{k-j} \rho(s)}\mcG_v^j(s) + \infnorm{\nabla_x^{k-j}E(s)}\mcG_v^{j+1}(s) \biggr ] ds.$$
Summing over all such derivatives with $|\beta| = k$ gives the desired result for any $\alpha= 1,\hdots,N$.
	%provided the integrals converge.
\end{proof}

\begin{lemma}
\label{dvkFell}
For every $\alpha = 1, \hdots, N$, $k \in \N_0$, and $\ell\in\mathbb{N}$, the spatial density $\nabla_v^k F^{\alpha,\ell}(t,v)$ satisfies
$$\infnorm{\nabla_v^k F^{\alpha,\ell}(t)-\nabla_v^k F^{\alpha,\ell}_\infty} \lesssim \int_t^\infty \sum_{j=0}^{k+\ell} s^{k+\ell-j}\infnorm{\nabla_x^{k+\ell-j}E(s)} \Big( s \mcG_{x,v}^{j+1}(s) + \mcG_v^{j+1}(s) \Big) \, ds$$
under the assumption that the integral on the right side is finite.
\end{lemma}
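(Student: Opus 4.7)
The plan is to mirror Lemma~\ref{dvkFalpha0}, with the essential twist that we do \emph{not} integrate by parts in the spatial variable; this is what explains the absence of $\rho$ and the appearance of $\mcG_{x,v}$ in the bound. Starting from
$$D_v^\gamma F^{\alpha,\ell}(t,v) = \sum_{|\beta|=\ell} \frac{1}{\beta!} \int (-y)^\beta D_v^{\beta+\gamma} g^\alpha(t,y,v)\, dy$$
for any multi-index $\gamma$ with $|\gamma|=k$, we first differentiate in time, pulling $\partial_t$ inside the integral and substituting \eqref{VPg} for $\partial_t g^\alpha$. The spatial support bound \eqref{Sxg} ensures every manipulation below is performed on a uniformly compact set in $y$.

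Next, distribute $D_v^{\beta+\gamma}$ across the resulting products $tE(t,y+vt)\cdot\nabla_y g^\alpha$ and $E(t,y+vt)\cdot\nabla_v g^\alpha$ via the Leibniz rule, using the chain-rule identity $D_v^\mu[E(t,y+vt)] = t^{|\mu|}(D_x^\mu E)(t,y+vt)$. This produces terms of the form $t^{1+|\mu|}(D_x^\mu E)(t,y+vt)\cdot D_v^{\beta+\gamma-\mu}\nabla_y g^\alpha$ from the transport piece and $t^{|\mu|}(D_x^\mu E)(t,y+vt)\cdot D_v^{\beta+\gamma-\mu}\nabla_v g^\alpha$ from the acceleration piece. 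Unlike the $\ell=0$ argument, the first kind of term is left as-is, so that the derivative $D_v^{\beta+\gamma-\mu}\nabla_y g^\alpha$ (of total order $k+\ell-|\mu|+1$, with at least one spatial derivative) is absorbed into $\mcG_{x,v}^{k+\ell-|\mu|+1}(t)$ via \eqref{eq:G^k_x,v}, while the second kind is controlled by $\mcG_v^{k+\ell-|\mu|+1}(t)$ via \eqref{eq:G^k_v}.

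Taking the supremum over $x,v$, the polynomial factor $|y|^\ell$ and the measure of the $y$-support of $g^\alpha$ are both uniformly bounded in time by \eqref{Sxg}, so after collecting constants and reindexing by $j := k+\ell-|\mu|$ one arrives at
$$\infnorm{\partial_t D_v^\gamma F^{\alpha,\ell}(t)} \lesssim \sum_{j=0}^{k+\ell} t^{k+\ell-j}\infnorm{\nabla_x^{k+\ell-j}E(t)}\bigl( t\,\mcG_{x,v}^{j+1}(t) + \mcG_v^{j+1}(t)\bigr).$$
Integrating this estimate in time from $t$ to $\tau$ and letting $\tau\to\infty$ identifies the limit with $D_v^\gamma F^{\alpha,\ell}_\infty$ using the assumed finiteness of the right-hand side, and summing over all $\gamma$ with $|\gamma|=k$ delivers the claim.

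The only real obstacle is bookkeeping the multi-index combinatorics cleanly; the analytic content is routine given Lemma~\ref{dvkFalpha0}. The substantive choice is refusing to integrate by parts in $y$: for $\ell\geq 1$ that operation would release factors $\nabla_y[(-y)^\beta]$ whose treatment is more delicate than simply absorbing $\nabla_y g^\alpha$ into $\mcG_{x,v}^{j+1}$, and it would bring in lower-order moments of $\rho$ and its derivatives that do not otherwise appear in the desired bound.
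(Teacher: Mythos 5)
Your proposal is correct and takes essentially the same approach as the paper: time-differentiate, substitute the Vlasov equation, distribute derivatives via Leibniz and the chain-rule identity $D_v^\mu[E(t,y+vt)] = t^{|\mu|}(D_x^\mu E)(t,y+vt)$, absorb the resulting derivatives of $g^\alpha$ into $\mcG_{x,v}^{j+1}$ and $\mcG_v^{j+1}$ using the uniformly bounded spatial support, and integrate in $t$. Your observation that one should \emph{not} integrate by parts in $y$ (unlike the $\ell=0$ argument of Lemma~\ref{dvkFalpha0}, which uses $\nabla_x\cdot E=\rho$) is exactly right and matches the paper's treatment; it is the correct explanation for why $\mcG_{x,v}$, rather than $\rho$, appears in this bound.
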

\begin{proof}
    Taking a time derivative of $F^{\alpha,\ell}(t,v)$ and using \eqref{VPg}, we find
    \begin{align*}
        \partial_t F^{\alpha,\ell}(t,v)&=\sum_{|\beta|=\ell}\int {(-x)}^\beta D_v^\beta \partial_t g^\alpha(t,x,v) \ dx\\
        &=\frac{q_\alpha}{m_\alpha}\sum_{|\beta|=\ell}\int {(-x)}^\beta D_v^\beta  \sqrP{ E(t,x+vt)\cdot \Big( t\nabla_x g^\alpha(t,x,v) - \nabla_v g^\alpha(t,x,v) \Big) }dx.
    \end{align*}
	Next, we apply any $v$-derivative of order $k$ to this equation yielding
	\begin{align*}
        \partial_t D_v^k F^{\alpha,\ell}(t,v) &= \frac{q_\alpha}{m_\alpha}\sum_{|\beta|=\ell} \int {(-x)}^\beta D_v^\beta D_v^k \sqrP{ E(t,x+vt)\cdot \Big( t\nabla_x g^\alpha(t,x,v) - \nabla_v g^\alpha(t,x,v) \Big) }dx.
    \end{align*}
    Applying these derivatives to the product and using the bounded spatial support of $g^\alpha$ from \eqref{Sxg}, we find 
%    \begin{align*}
%        \infnorm{\partial_t D_v^k F^{\alpha,\ell}(t)} &\lesssim \left|\frac{q_\alpha}{m_\alpha}\right|
%        \sum_{j=0}^{k+\ell}\binom{k\!+\!\ell}{j} t^{k+\ell-j}\infnorm{D_x^{k+\ell-j}E(t)} \Big[ t \mcG_{x,v}^{j+1}(t) + \mcG_v^{j+1}(t) \Big] 
%    \end{align*}
%    and thus
    \begin{align*}
        \infnorm{\partial_t D_v^k F^{\alpha,\ell}(t)} &\lesssim \sum_{j=0}^{k+\ell} t^{k+\ell-j}\infnorm{\nabla_x^{k+\ell-j}E(t)} \Big[ t \mcG_{x,v}^{j+1}(t) + \mcG_v^{j+1}(t) \Big]
    \end{align*}
    where $j = 0, ..., k+\ell$ represents the number of derivatives applied to the distribution function within the product.
    Taking $\tau\geq t$, we can write
    \[\infnorm{D_v^k F^{\alpha,\ell}(t)-D_v^k F^{\alpha,\ell}(\tau)} 
	=\infnorm{\int_t^\tau \partial_t D_v^k F^{\alpha,\ell}(s) \ ds} 
	\leq \int_t^\tau \infnorm{\partial_t D_v^k F^{\alpha,\ell}(s)} \ ds. \]
    Letting $\tau \to \infty$, we have 
	\begin{align*}
		\infnorm{D_v^k F^{\alpha,\ell}(t)-D_v^k F^{\alpha,\ell}_\infty} \lesssim \int_t^\infty 
        \sum_{j=0}^{k+\ell} s^{k+\ell-j}\infnorm{\nabla_x^{k+\ell-j}E(s)} \Big( s \mcG_{x,v}^{j+1}(s) + \mcG_v^{j+1}(s) \Big) \, ds,
	\end{align*}
	and summing over all such derivatives provides the desired result.
\end{proof}

\subsection{Improved Bounds on Gradients of the Particle Density and the Field}

The following lemma allows us to estimate lower-order field derivatives.

\begin{lemma}
\label{DEgeneral}
For any $k \in \N_0$ and $\ell \in \mathbb{N}$, we have 
\begin{align*}
	\sup_{x \in \bfR^3} \left | t^{k+\ell+2} \nabla^k_xE(t,x) - \nabla^k_vE_{\ell,\infty}\left(\frac{x}{t} \right) \right | &\lesssim \sum_{m=0}^{\ell-1} t^{\ell -m} \| \nabla_x^k \rho_m(t) \|_\infty + \max_{\alpha = 1, ..., N} \| \nabla^k_v F^{\alpha,\ell}(t)  - \nabla^k_v F^{\alpha,\ell}_\infty \|_\infty \\
	&\qquad + t^{-1} \mcG_v^{k+\ell+1}(t).
\end{align*}
In the case $\ell = 0$, this reduces to
$$\sup_{x \in \bfR^3} \left | t^{k+2} \nabla^k_xE(t,x) - \nabla^k_vE_{0,\infty}\left(\frac{x}{t} \right) \right | \lesssim \max_{\alpha = 1, ..., N} \| \nabla^k_v F^{\alpha,0}(t)  - \nabla^k_v F^{\alpha,0}_\infty \|_\infty + t^{-1} \mcG_v^{k+1}(t)$$
for any $k \in \N_0$.
\end{lemma}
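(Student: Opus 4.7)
The plan is to insert the Taylor expansion \eqref{g:taylor} into the field representation \eqref{E}. Writing $\frac{x-y}{t} - \xi = \left(\frac{x}{t} - \xi\right) - \frac{y}{t}$, we may apply \eqref{g:taylor} with the base point shifted from $\frac{x}{t}$ to $\frac{x}{t} - \xi$. Substituting the resulting expansion into \eqref{E} and recognizing the inner $y$-integrals via the definition \eqref{eq:f-al} of $F^{\alpha,m}$ together with the formula for $E_m$ yields the decomposition
\[
E(t,x) = \sum_{m=0}^{\ell} t^{-(m+2)} E_m(t,x) + R_\ell(t,x),
\]
where $R_\ell(t,x)$ is a constant multiple of
\[
\frac{1}{t^{\ell+3}} \sum_{\alpha=1}^N q_\alpha \sum_{|\beta|=\ell+1} \frac{1}{\beta!} \iint \frac{\xi}{|\xi|^3}\, (-y)^\beta D_v^\beta g^\alpha\!\left(t, y, \tfrac{x - \theta^\alpha y}{t} - \xi\right) dy\, d\xi.
\]
Applying $\nabla_x^k$ and multiplying by $t^{k+\ell+2}$ splits the quantity of interest into three pieces: the $m = \ell$ main term $t^k \nabla_x^k E_\ell$, the lower-order terms with $m < \ell$ (empty when $\ell = 0$), and the remainder.

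For the main term, the chain rule gives
\[
t^k \nabla_x^k E_\ell(t,x) = \frac{1}{4\pi} \sum_\alpha q_\alpha \int \frac{\xi}{|\xi|^3}\, \nabla_v^k F^{\alpha,\ell}\!\left(t, \tfrac{x}{t} - \xi\right) d\xi,
\]
so by \eqref{eq:E-l-inf},
\[
t^k \nabla_x^k E_\ell(t,x) - \nabla_v^k E_{\ell,\infty}\!\left(\tfrac{x}{t}\right) = \frac{1}{4\pi} \sum_\alpha q_\alpha \int \frac{\xi}{|\xi|^3}\, \left[ \nabla_v^k F^{\alpha,\ell}\!\left(t, \tfrac{x}{t} - \xi\right) - \nabla_v^k F^{\alpha,\ell}_\infty\!\left(\tfrac{x}{t} - \xi\right) \right] d\xi.
\]
The bracketed difference has compact $\xi$-support uniformly in $t$ (inherited from \eqref{Sxg} and Lemma \ref{Sg}), so Lemma \ref{LE} bounds this by $\max_\alpha \|\nabla_v^k F^{\alpha,\ell}(t) - \nabla_v^k F^{\alpha,\ell}_\infty\|_\infty$, as required.

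For the lower-order terms, Lemma \ref{LE} applied to the compactly supported $\nabla_v^k F^{\alpha,m}(t,\cdot)$ produces $\|\nabla_x^k E_m(t)\|_\infty \lesssim \|\nabla_x^k \rho_m(t)\|_\infty$, and hence $t^{k+\ell-m}\|\nabla_x^k E_m(t)\|_\infty \lesssim t^{\ell-m} \|\nabla_x^k \rho_m(t)\|_\infty$. For the remainder, each $\nabla_x$ passes through the integrand as $t^{-1}\nabla_v$ acting on $g^\alpha$, so $\nabla_x^k$ contributes a factor $t^{-k}$ together with $k$ additional $v$-derivatives; thus a total of $k+\ell+1$ velocity derivatives act on $g^\alpha$. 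Performing the bounded $y$-integral (via \eqref{Sxg}) yields a function of $\xi$ with compact support (via the uniform velocity-support bound from Lemma \ref{Sg}) and $L^\infty$ norm controlled by $\mcG^{k+\ell+1}_v(t)$, so a final application of Lemma \ref{LE} gives $|t^{k+\ell+2} \nabla_x^k R_\ell(t,x)| \lesssim t^{-1} \mcG^{k+\ell+1}_v(t)$. Summing the three contributions yields the stated estimate, and the $\ell=0$ case is immediate since the lower-order sum is empty. The main obstacle is the bookkeeping: every appearance of the singular kernel $\xi/|\xi|^3$ must act on a function with uniformly compact $\xi$-support in order to legitimately invoke Lemma \ref{LE}, which is ensured throughout by the uniform support bounds from \eqref{Sxg} and Lemma \ref{Sg}.
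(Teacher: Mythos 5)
Your overall strategy---Taylor-expand $g^\alpha$ in the velocity argument of the field representation \eqref{E}, identify the $m$th coefficient with $E_m$, isolate the $m=\ell$ term as the difference $F^{\alpha,\ell} - F^{\alpha,\ell}_\infty$, and bound the remainder via the compact support of $g^\alpha$ and Lemma \ref{LE}---is the same decomposition the paper uses (the paper applies $\nabla_x^k$ to \eqref{E} before expanding, whereas you expand first; this is a reordering, not a different route).

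However, the ``hence'' in your treatment of the lower-order sum is a non-sequitur. You correctly establish $\|\nabla_x^k E_m(t)\|_\infty \lesssim \|\nabla_x^k\rho_m(t)\|_\infty$, but from that one only gets
\[
t^{k+\ell-m}\|\nabla_x^k E_m(t)\|_\infty \lesssim t^{k+\ell-m}\|\nabla_x^k\rho_m(t)\|_\infty,
\]
not $t^{\ell-m}\|\nabla_x^k\rho_m(t)\|_\infty$; a factor $t^k$ has silently disappeared. To see the issue directly: the $m$th lower-order contribution to $t^{k+\ell+2}\nabla_x^k E(t,x)$ is
\[
t^{k+\ell-m}\,\nabla_x^k E_m(t,x) = \frac{t^{\ell-m}}{4\pi}\sum_{\alpha} q_\alpha \int \frac{\xi}{|\xi|^3}\, \nabla_v^k F^{\alpha,m}\!\left(t, \tfrac{x}{t}-\xi\right) d\xi,
\]
and Lemma \ref{LE} bounds this by $t^{\ell-m}\,\big\|\sum_\alpha q_\alpha \nabla_v^k F^{\alpha,m}(t,\cdot)\big\|_\infty$, but the chain rule in \eqref{eq:rho-l} gives $\nabla_x^k \rho_m(t,x) = t^{-k}\sum_\alpha q_\alpha \nabla_v^k F^{\alpha,m}(t,\tfrac{x}{t})$, so that last norm equals $t^{k}\|\nabla_x^k\rho_m(t)\|_\infty$. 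You therefore need either to track this $t^k$ explicitly, or to state the bound in terms of the scale-adapted quantity $\big\|\sum_\alpha q_\alpha \nabla_v^k F^{\alpha,m}(t)\big\|_\infty$ (equivalently $t^k\|\nabla_x^k\rho_m(t)\|_\infty$, which is what Lemma \ref{Drhogeneral} actually controls). Note that the paper's own intermediate bound on its term $I$, written as $t^{\ell-m}\|\nabla_x(\Delta_x)^{-1}\nabla_x^k\rho_m(t)\|_\infty$, leans on the identification $E_m = \nabla_x(\Delta_x)^{-1}\rho_m$ from \eqref{eq:rho-l}, but a change of variables $z = x - t\xi$ shows $\nabla_x(\Delta_x)^{-1}\rho_m(t,x) = t\,E_m(t,x)$, so that identity already conceals powers of $t$ per derivative; be careful which normalization of $E_m$ you carry through. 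A separate, more minor technical point: since you Taylor-expand first and then apply $\nabla_x^k$, the Lagrange-form $\theta^\alpha$ in the remainder depends on $x$, so passing each $\nabla_x$ through the remainder as $t^{-1}\nabla_v$ is not legitimate as written; use the integral form of the remainder, or (as the paper does) apply $\nabla_x^k = t^{-k}\nabla_v^k$ to \eqref{E} first and Taylor-expand $\nabla_v^k g^\alpha$ afterwards.
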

\begin{proof}
We begin by taking any $k$th-order derivative with respect to $x$, denoted by $D_x^k$, of the $j$th component of \eqref{E}, subtracting the corresponding $k$th order $v$ derivative, denoted $D_v^k$, of the limiting field given by \eqref{eq:E-l-inf}, and using the Taylor series expansion \eqref{g:taylor} of $g^\alpha$, we find
\begin{align*}
t^{k+\ell+2} D^k_xE^j(t,x) &- D^k_v E^j_{\ell,\infty}\left(\frac{x}{t} \right) \\
& = \frac{1}{4\pi} \int \frac{\xi_j}{|\xi|^3}  \sum_{\alpha = 1}^N q_\alpha \left( t^\ell \int D^k_v \, g^\alpha \!\left(t, w, \frac{x-w}{t} - \xi \right)  dw - D^k_v F^{\alpha,\ell}_\infty \left(\frac{x}{t} - \xi \right) \right)\ d\xi\\
& = \frac{1}{4\pi} \biggl (\int \frac{\xi_j}{|\xi|^3}  \sum_{\alpha = 1}^N q_\alpha \biggl [ \, \sum_{m=0}^{\ell-1} \sum_{|\beta| = m} t^{\ell-m} \frac{1}{\beta!}\int (-w)^\beta D^k_vD^\beta_v\, g^\alpha \!\left(t, w, \frac{x}{t} - \xi \right)  dw\\
& \qquad + \sum_{|\beta| = \ell} \frac{1}{\beta!} \int (-w)^\beta D_v^k D^\beta_v\, g^\alpha \!\left(t, w, \frac{x}{t} - \xi \right)  dw - D^k_v F^{\alpha,\ell}_\infty \left(\frac{x}{t} - \xi \right)\\
& \qquad  + t^{-1}  \sum_{|\beta| = \ell + 1} \frac{1}{\beta!}\int (-w)^\beta D_v^k D^\beta_v \, g^\alpha \!\left(t, w, \frac{x - \theta^\alpha w}{t} - \xi \right)  dw  \biggr ] \ d\xi \biggr )
% & \lesssim \sum_{m=0}^{\ell-1} t^{\ell-m} \| \nabla_x \left( \Delta_x \right)^{-1} \nabla_x^k \rho_m(t) \|_\infty + t^{-1} \| \nabla_x \left( \Delta_x \right)^{-1} \nabla_x^k \rho_{\ell+1}(t) \|_\infty\\
% & \qquad  +  \frac{1}{4\pi} \int \frac{\xi_j}{|\xi|^3}  \sum_{\alpha = 1}^N q_\alpha \biggl ( \nabla_v^k F^{\alpha,\ell} \left(t, \frac{x}{t} - \xi \right)  - \nabla^k_vF^{\alpha,\ell}_\infty \left(\frac{x}{t} - \xi \right) \biggr ) \ d\xi \\
% & =: I + II + III.
\end{align*}
for some $\theta^\alpha \in [0,1]$. Next we estimate this expression. To estimate the first two terms on the right side, we use the definitions \eqref{eq:f-al} and \eqref{eq:rho-l} of $F^{\alpha,\ell}$ and $\rho_\ell$, respectively. To estimate the integral within the Taylor remainder term (the last on the right side) we use the uniform-in-time bound on the spatial support of $g^\alpha$ in \eqref{Sxg}. We thus obtain
\begin{align*}
	\left |t^{k+\ell+2} D^k_xE^j(t,x)  - D^k_v E^j_{\ell,\infty}\left(\frac{x}{t} \right) \right | & \lesssim \sum_{m=0}^{\ell-1} t^{\ell-m} \| \nabla_x \left( \Delta_x \right)^{-1} \nabla_x^k \rho_m(t) \|_\infty\\
	& \qquad  + \left |  \frac{1}{4\pi} \int \frac{\xi_j}{|\xi|^3}  \sum_{\alpha = 1}^N q_\alpha \biggl ( D_v^k F^{\alpha,\ell} \left(t, \frac{x}{t} - \xi \right)  - D^k_vF^{\alpha,\ell}_\infty \left(\frac{x}{t} - \xi \right) \biggr ) \ d\xi \right | \\
	& \qquad  + t^{-1} \max_{\alpha = 1, ..., N} \left\| \nabla_v^{k+\ell+1} g^\alpha(t) \right\|_\infty \\
	& =: I + II + III.
\end{align*}
To estimate $I$, we merely use Lemma \ref{LE} so that
$$I \lesssim \sum_{m=0}^{\ell-1} t^{\ell-m} \left\|\nabla_x^k \rho_m(t) \right\|_\infty.$$
We write the term $II$ as
$$II =  \left | \frac{1}{4\pi} \int \frac{\xi_j}{|\xi|^3} \mcA\left(t, \frac{x}{t} -\xi \right) d\xi \right |$$
where $\mcA(t,v)$ is defined by 
\begin{equation*}
\label{G1}
\mcA(t,v) =  \sum_{\alpha = 1}^N q_\alpha  \Big(D^k_v F^{\alpha,\ell}(t,v) - D^k_v F^{\alpha,\ell}_\infty(v) \Big).
\end{equation*}
Using Lemma \ref{LE} again, we find
$$II \lesssim \left \| \nabla_v (\Delta_v)^{-1} \mcA(t) \right \|_\infty  \lesssim \left \|  \mcA(t) \right \|_\infty \lesssim \max_{\alpha = 1, ..., N} \left\| \nabla^k_v F^{\alpha,\ell}(t)  - \nabla^k_v F^{\alpha,\ell}_\infty \right\|_\infty .$$
Lastly, the Taylor remainder term in $III$ satisfies
$$III \lesssim t^{-1} \mcG_v^{k+\ell+1}(t).$$
Finally, collecting these estimates and summing over all $k$th order derivatives yields the first result.
The proof of the second ($\ell =0$) result is identical with the exception that no Taylor expansion is required, and hence, the first term in the right side of the estimate does not appear.
\end{proof}

%\begin{lemma}
%\label{Drho}
%We have
%$$\sup_{x \in \bfR^3} \left | t \nabla_x\rho_0(t,x) - \nabla_v\rho_{0,\infty}\left(\frac{x}{t} \right) \right | \lesssim \max_{\alpha = 1, ..., N} \| \nabla_v F^{\alpha, 0}(t)  - \nabla_v F^{\alpha, 0}_\infty \|_\infty \lesssim t^{-1},$$
%and because $\rho_{0,\infty} \equiv 0$, this further implies
%$$\Vert \nabla_x\rho_0(t) \Vert_\infty \lesssim t^{-2}.$$
%\end{lemma}
%\begin{proof}
%Taking a derivative of \eqref{rho0} gives
%$$\nabla_x \rho_0(t,x)= t^{-1} \sum_{\alpha=1}^N q_\alpha  \int_{\mathbb{R}^3} \nabla_v g^\alpha \left(t,y,\frac{x}{t} \right)\,dy$$
%or, expressing this in terms of $F^{\alpha,0}(t,v)$,
%$$\nabla_x \rho_0(t,x)= t^{-1} \sum_{\alpha=1}^N q_\alpha \nabla_v F^{\alpha,0} \left(t, \frac{x}{t} \right).$$
%Hence, multiplying by $t$ and subtracting the proposed limit yields
%\begin{eqnarray*}
%\left | t \nabla_x \rho_0(t,x) - \nabla_v \rho_{0,\infty} \left( \frac{x}{t} \right) \right | 
%& = & \left | \sum_{\alpha=1}^N q_\alpha \left [ \nabla_v F^{\alpha,0} \left(t, \frac{x}{t} \right) -  \nabla_v F_\infty^{\alpha,0} \left(\frac{x}{t} \right)\right ] \right |\\
%& \lesssim & \max_{\alpha = 1, ..., N} \| \nabla_v F^{\alpha, 0}(t)  - \nabla_v F^{\alpha, 0}_\infty \|_\infty.
%\end{eqnarray*}
%\end{proof}

As an intermediate step to estimating field derivatives, we must also control derivatives of the approximations to the charge density.

\begin{lemma}
\label{Drhogeneral}
For any $k,\ell \in \N_0$, we have
$$\sup_{x \in \bfR^3} \left | t^k \nabla^k_x\rho_\ell(t,x) - \nabla^k_v\rho_{\ell,\infty}\left(\frac{x}{t} \right) \right | \lesssim \max_{\alpha = 1, ..., N} \| \nabla^k_v F^{\alpha, \ell}(t)  - \nabla^k_v F^{\alpha, \ell}_\infty \|_\infty$$
\end{lemma}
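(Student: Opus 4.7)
The proof will be a direct unwinding of definitions, requiring nothing more than the chain rule and the triangle inequality; no genuine analytic obstacle should appear. The plan is as follows.

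First, starting from the definition $\rho_\ell(t,x) = \sum_{\alpha=1}^N q_\alpha F^{\alpha,\ell}(t, x/t)$ in \eqref{eq:rho-l}, I apply an arbitrary $k$th-order spatial derivative $D_x^k$. Each derivative in $x$ acts on the argument $x/t$, so that by the chain rule it extracts a factor of $t^{-1}$ and converts into a $v$-derivative of $F^{\alpha,\ell}$. Consequently,
$$
D_x^k \rho_\ell(t,x) = t^{-k} \sum_{\alpha=1}^N q_\alpha \, D_v^k F^{\alpha,\ell}\!\left(t, \frac{x}{t}\right).
$$
An identical chain-rule computation applied to the definition \eqref{eq:rho-l-inf} of $\rho_{\ell,\infty}$ gives $D_v^k \rho_{\ell,\infty}(x/t) = \sum_\alpha q_\alpha D_v^k F^{\alpha,\ell}_\infty(x/t)$.

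Multiplying the first identity by $t^k$ and subtracting the second, the structure is preserved and yields
$$
t^k D_x^k \rho_\ell(t,x) - D_v^k \rho_{\ell,\infty}\!\left(\frac{x}{t}\right)
= \sum_{\alpha=1}^N q_\alpha \left[ D_v^k F^{\alpha,\ell}\!\left(t,\frac{x}{t}\right) - D_v^k F^{\alpha,\ell}_\infty\!\left(\frac{x}{t}\right) \right].
$$
Taking absolute values, applying the triangle inequality on the finite sum over species (so the charges $q_\alpha$ and the number $N$ are absorbed into the implicit constant in $\lesssim$), and then taking the supremum over $x \in \bfR^3$ — which is equivalent to the supremum of the differences $\| D_v^k F^{\alpha,\ell}(t) - D_v^k F^{\alpha,\ell}_\infty\|_\infty$ since $x/t$ ranges over all of $\bfR^3$ — yields the desired bound. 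Finally, summing over all $k$th-order multi-indices does not affect the estimate, completing the argument. The only bookkeeping worth emphasizing is that the $k$ factors of $t^{-1}$ generated by the chain rule cancel exactly against the prefactor $t^k$ on the left-hand side.
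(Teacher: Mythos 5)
Your proposal is correct and is essentially the paper's own proof: apply the chain rule to the definition \eqref{eq:rho-l} to get $D_x^k \rho_\ell(t,x) = t^{-k}\sum_\alpha q_\alpha D_v^k F^{\alpha,\ell}(t,x/t)$, multiply by $t^k$, subtract $D_v^k\rho_{\ell,\infty}(x/t)$, and estimate by the triangle inequality and a supremum. (One trivial quibble: the second computation, for $\rho_{\ell,\infty}$, is just linearity of $D_v^k$, not a chain rule, since the derivative is taken before evaluating at $v=x/t$ — but this does not affect anything.)
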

\begin{proof}
Taking any $k$th-order derivative of the representation for $\rho_\ell(t,x)$ in \eqref{eq:rho-l} gives
$$D^k_x \rho_\ell(t,x) =  t^{-k} \sum_{\alpha=1}^N q_\alpha D^k_v F^{\alpha, \ell} \left( t, \frac{x}{t} \right).$$
Hence, multiplying by $t^k$ and subtracting the proposed limit, given by the derivative of \eqref{eq:rho-l-inf}, yields
\begin{eqnarray*}
\left | t^k D^k_x \rho_\ell(t,x) - D^k_v \rho_{\ell,\infty} \left( \frac{x}{t} \right) \right | 
& = & \left | \sum_{\alpha=1}^N q_\alpha \left [ D^k_v F^{\ell,\alpha} \left(t, \frac{x}{t} \right) -  D^k_v F_\infty^{\ell,\alpha} \left(\frac{x}{t} \right)\right ] \right |\\
& \lesssim & \max_{\alpha = 1, ..., N} \| \nabla^k_v F^{\alpha, \ell}(t)  - \nabla^k_v F^{\alpha, \ell}_\infty \|_\infty,
\end{eqnarray*}
and upon summing over all such derivatives, this provides the stated result.
\end{proof}

The following lemma allows us to estimate higher-order field derivatives using fewer $v$-derivatives of $F^{\alpha,\ell}$ than within Lemma \ref{DEgeneral} with the addition of a $\ln(t)$ factor.

\begin{lemma}
\label{LDEgeneral}
For any $k,\ell \in \mathbb{N}$, we have
\begin{align*}
	\sup_{x \in \bfR^3} \left | t^{k+\ell+2} \nabla^k_xE(t,x) - \nabla^k_vE_{\ell,\infty}\left(\frac{x}{t} \right) \right | & \lesssim \sum_{m=0}^{\ell -1} t^{\ell-m} \|\nabla_x^k \rho_m(t) \|_\infty + \ln(t) \max_{\alpha = 1, ..., N}  \Vert \nabla_v^{k-1}F^{\alpha, \ell}(t) - \nabla_v^{k-1}F^{\alpha, \ell}_\infty\Vert_\infty \\
	& \qquad +  t^{-1} \left( \mcG_v^{k+\ell}(t) + \mcG_v^{k+\ell+1}(t) \right).
\end{align*}
Furthermore, in the case $\ell = 0$, we have
$$\sup_{x \in \bfR^3} \left | t^{k+2} \nabla^k_xE(t,x) - \nabla^k_vE_{0,\infty}\left(\frac{x}{t} \right) \right | \lesssim \ln(t) \biggl ( \max_{\alpha = 1, ..., N} \Vert \nabla_v^{k-1} F^{\alpha, 0}(t) -  \nabla_v^{k-1}F^{\alpha, 0}_\infty(t) \Vert_\infty +  t^{-1} \mcG_v^{k}(t) \biggr ).$$
for any $k \in \mathbb{N}$.
\end{lemma}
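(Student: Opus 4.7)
\textbf{Proof proposal for Lemma \ref{LDEgeneral}.}
The plan is to combine the Taylor expansion strategy from the proof of Lemma \ref{DEgeneral} with the singular-integral splitting and integration-by-parts trick from Lemma \ref{LDE}, so as to trade one $v$-derivative on $F^{\alpha,\ell}$ for a $\ln(t)$ loss. I would begin exactly as in the proof of Lemma \ref{DEgeneral}: take a $k$th $x$-derivative of the kernel representation \eqref{E} of $E(t,x)$, compare it to $D_v^k E_{\ell,\infty}(x/t)$ via \eqref{eq:E-l-inf}, and insert the Taylor expansion \eqref{g:taylor} of $g^\alpha$ of order $\ell$. The intermediate terms $m=0,\dots,\ell-1$ are bounded by $\sum_{m=0}^{\ell-1} t^{\ell-m}\|\nabla_x^k \rho_m(t)\|_\infty$ via Lemma \ref{LE}, and the Taylor remainder directly contributes $t^{-1}\mcG_v^{k+\ell+1}(t)$. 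What remains is the $m=\ell$ contribution, namely
\[
II := \frac{1}{4\pi}\int \frac{\xi_j}{|\xi|^3}\,\mcA\!\left(t,\tfrac{x}{t}-\xi\right)\, d\xi, \qquad \mcA(t,v) := \sum_{\alpha=1}^N q_\alpha\bigl(D_v^k F^{\alpha,\ell}(t,v) - D_v^k F^{\alpha,\ell}_\infty(v)\bigr).
\]

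To handle $II$ I would follow the proof of Lemma \ref{LDE}: split $D_v^k = \partial_{v_i} D_v^{k-1}$ and pull out $\partial_{v_i}$ to write $\mcA = \partial_{v_i}\tilde{\mcA}$, where $\tilde{\mcA}$ is the analogous difference built from $D_v^{k-1}$. Since $\partial_{v_i}\tilde{\mcA}(t,\tfrac{x}{t}-\xi) = -\partial_{\xi_i}[\tilde{\mcA}(t,\tfrac{x}{t}-\xi)]$, I can decompose the $\xi$-integral into the four regions $|\xi|<d$, $|\xi|=d$ (a boundary term produced by the integration by parts), $d<|\xi|<R$, and $|\xi|>R$ for some fixed $R$. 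By the uniform spatial support bound \eqref{Sxg} and the compact $v$-support inherited from $f_0^\alpha$ and $f^\alpha_\infty$, the auxiliary quantity $\tilde{\mcA}$ satisfies $\|\tilde{\mcA}(t)\|_\infty+\|\tilde{\mcA}(t)\|_1 \lesssim \max_\alpha\|\nabla_v^{k-1}(F^{\alpha,\ell}(t)-F^{\alpha,\ell}_\infty)\|_\infty$ and $\|\partial_{v_i}\tilde{\mcA}(t)\|_\infty \lesssim \mcG_v^{k+\ell}(t)$. The same four region-wise estimates as in Lemma \ref{LDE} then yield
\[
|II|\lesssim d\,\mcG_v^{k+\ell}(t) + \bigl(1+\ln(R/d)+R^{-3}\bigr)\max_\alpha\|\nabla_v^{k-1}(F^{\alpha,\ell}(t)-F^{\alpha,\ell}_\infty)\|_\infty,
\]
and choosing $d=1/t$ produces the $\ln(t)$ factor in front of the $\nabla_v^{k-1}$ difference together with the $t^{-1}\mcG_v^{k+\ell}(t)$ term, completing the first displayed bound.

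For the $\ell=0$ statement, no intermediate $\rho_m$ terms are present, and crucially the stated bound does not contain a $t^{-1}\mcG_v^{k+1}(t)$ contribution, which indicates that the same IBP/splitting procedure must be applied to the Taylor remainder itself. The remainder involves one $\nabla_v$ acting on $g^\alpha$ evaluated at $(x-\theta^\alpha y)/t - \xi$; peeling off this velocity derivative as a $\xi$-derivative and again splitting the $\xi$-integral with $d=1/t$ replaces the naive $t^{-1}\mcG_v^{k+1}(t)$ by $\ln(t)\cdot t^{-1}\mcG_v^{k}(t)$, which is exactly the additional logarithmic piece appearing in the $\ell=0$ bound. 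The principal technical obstacle is precisely this smoothing of the Taylor remainder before integration by parts in $\xi$: the Lagrange form in \eqref{g:taylor} has $\theta^\alpha$ depending on $(y,x,\xi)$, so one cannot differentiate in $\xi$ cleanly. The remedy is to rewrite the remainder in its integral form, namely
\[
\frac{(-y)^\beta}{t^{\ell+1}\beta!}\,D_v^\beta g^\alpha\!\left(t,y,\tfrac{x-\theta^\alpha y}{t}-\xi\right) \;=\; \frac{(-y)^\beta}{t^{\ell+1}\beta!}(\ell+1)\int_0^1 (1-s)^\ell\, D_v^\beta g^\alpha\!\left(t,y,\tfrac{x-sy}{t}-\xi\right)ds,
\]
after which $s$ ranges smoothly over $[0,1]$ independently of $\xi$; interchanging the $s$ and $\xi$ integrations then permits the IBP to proceed exactly as in Lemma \ref{LDE}. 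All remaining estimates, including the boundary term at $|\xi|=d$ and the tail at $|\xi|>R$ (which uses the compact $v$-support of $g^\alpha$ to bound $\|\tilde{\mcA}\|_1$), follow in complete analogy with the proof of Lemma \ref{LDE}, and summing over the finitely many $k$th-order derivatives gives the stated supremum bounds.
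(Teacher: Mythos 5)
Your treatment of the case $\ell\geq1$ reproduces the paper's proof: insert the Taylor expansion \eqref{g:taylor} of order $\ell$ into \eqref{E}, control the intermediate terms $m=0,\dots,\ell-1$ by $\rho_m$ via Lemma \ref{LE}, bound the order-$(\ell+1)$ Taylor remainder crudely by $t^{-1}\mcG_v^{k+\ell+1}(t)$, and handle the $m=\ell$ term by writing $D_v^k=\partial_{v_i}D_v^{k-1}$, transferring $\partial_{v_i}$ to $\partial_{\xi_i}$, and splitting the singular integral into $|\xi|<d$, $|\xi|=d$, $d<|\xi|<R$, $|\xi|>R$ exactly as in Lemma \ref{LDE}, ending with $d=t^{-1}$, $R^{-3}=\ln(t)$. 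Up to presentation, that part is the paper's argument.

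The $\ell=0$ case has a genuine gap, and the reason is structural: the Taylor-expansion-plus-IBP route cannot avoid a $\mcG_v^{k+1}$ dependence, but the stated bound for $\ell=0$ contains no $\mcG_v^{k+1}$ term. Concretely, writing the order-one remainder in Cauchy integral form (your formula) and factoring the velocity derivative as $\partial_{v_i}=-\partial_{\xi_i}$, you are left with a function $\tilde{\mcA}_{\mathrm{rem}}$ built from $D_v^k g^\alpha$ (carrying the explicit $t^{-1}$ prefactor). The four-region splitting then produces
$$
t^{-1}\Big( d\,\mcG_v^{k+1}(t) + \bigl(1+\ln(R/d)+R^{-3}\bigr)\mcG_v^{k}(t)\Big),
$$
since the inner region $|\xi|<d$ must retain the undifferentiated (i.e.\ $(k{+}1)$th-order) factor. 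No choice of $d$ removes the $\mcG_v^{k+1}$ dependence: at best you trade $d\,\mcG_v^{k+1}(t)$ against $\ln(1/d)\,\mcG_v^k(t)$, leaving a residual $\ln(\mcG_v^{k+1}(t))$ factor. This is not merely cosmetic: in the applications (Lemmas \ref{Dng0}/\ref{Dng} and the base case of Theorem \ref{T2}), Lemma \ref{LDEgeneral} is invoked with $k=n+2$, $\ell=0$ at a stage where only $\mcG_v^{n+2}(t)\lesssim\ln^2(t)$ is available and nothing at all is known about $\mcG_v^{n+3}$, so any leftover $\mcG_v^{k+1}$ breaks the bootstrap.

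The paper's $\ell=0$ argument avoids the Taylor expansion entirely. One writes the $m=0$ mismatch directly as the pair
$$
\mcA_1(t,v)=\sum_\alpha q_\alpha\int D_v^{k-1}\Bigl[g^\alpha\bigl(t,w,v-\tfrac{w}{t}\bigr)-g^\alpha(t,w,v)\Bigr]\,dw,
\qquad
\mcA_2(t,v)=\sum_\alpha q_\alpha D_v^{k-1}\bigl[F^{\alpha,0}(t,v)-F^{\alpha,0}_\infty(v)\bigr],
$$
so that the integrand is $\partial_{v_i}(\mcA_1+\mcA_2)$. The crucial point is that $\partial_{v_i}\mcA_1$ involves only $k$ derivatives of $g^\alpha$ (one has not yet introduced any extra $\nabla_v$ from a Taylor remainder), hence $\|\partial_{v_i}\mcA_1\|_\infty\lesssim\mcG_v^k(t)$, while the fundamental theorem of calculus applied to $g^\alpha(t,w,v-w/t)-g^\alpha(t,w,v)$ produces $\|\mcA_1\|_\infty+\|\mcA_1\|_1\lesssim t^{-1}\mcG_v^k(t)$, again costing only one additional $v$-derivative. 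The standard four-region splitting with $d=t^{-1}$ then yields $\ln(t)\bigl(\max_\alpha\|\nabla_v^{k-1}F^{\alpha,0}(t)-\nabla_v^{k-1}F^{\alpha,0}_\infty\|_\infty + t^{-1}\mcG_v^k(t)\bigr)$ with no $\mcG_v^{k+1}$ anywhere. You should replace the Taylor-remainder treatment for $\ell=0$ with this difference-quotient decomposition.
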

\begin{proof}
To prove the first result, we proceed as in the proof of Lemma \ref{DEgeneral}. Hence, we begin by taking any $k$th order derivative with respect to $x$ of the $j$th component of \eqref{E} and use the same Taylor series expansion of $g^\alpha$ so that
\begin{align*}
 t^{2+k+\ell}& D^k_xE^j(t,x) - D^k_v E^j_{\ell,\infty}\left(\frac{x}{t} \right) \\
& = \frac{1}{4\pi} \int \frac{\xi_j}{|\xi|^3}  \sum_{\alpha = 1}^N q_\alpha \left( t^\ell \int D^k_vg^\alpha \left(t, w, \frac{x-w}{t} - \xi \right)  dw - D^k_vF^{\alpha,\ell}_\infty \left(\frac{x}{t} - \xi \right) \right)\ d\xi\\
& = \frac{1}{4\pi} \int \frac{\xi_j}{|\xi|^3}  \sum_{\alpha = 1}^N q_\alpha \biggl [ \, \sum_{m=0}^{\ell-1} \sum_{|\beta| = m}  t^{\ell-m} \frac{1}{\beta!} \int (-w)^\beta D^k_vD^\beta_vg^\alpha \left(t, w, \frac{x}{t} - \xi \right)  dw\\
& \qquad \qquad \qquad \qquad + \sum_{|\beta| = \ell} \frac{1}{\beta!} \int (-w)^\beta D^k_v D^\beta_vg^\alpha \left(t, w, \frac{x}{t} - \xi \right)  dw - D^k_vF^{\alpha,\ell}_\infty \left(\frac{x}{t} - \xi \right)\\
& \qquad \qquad \qquad \qquad + t^{-1} \sum_{|\beta| = \ell+1} \frac{1}{\beta!} \int (-w)^\beta D^k_v D^\beta_v g^\alpha \left(t, w, \frac{x - \theta^\alpha w}{t}  - \xi \right)  dw  \biggr ] \ d\xi
\end{align*}
for some $\theta^\alpha \in [0,1]$. Using the bounded spatial support of $g^\alpha$ established in \eqref{Sxg} to estimate the integral within the Taylor remainder term, this becomes
\begin{align*}
	\left |t^{2+k+\ell} D^k_xE^j(t,x)  - D^k_v E^j_{\ell,\infty}\left(\frac{x}{t} \right) \right | & \lesssim \sum_{m=0}^{\ell-1} t^{\ell-m} \| \nabla_x \left( \Delta_x \right)^{-1} \nabla_x^k \rho_m(t) \|_\infty\\
	& \qquad  + \left |  \frac{1}{4\pi} \int \frac{\xi_j}{|\xi|^3}  \sum_{\alpha = 1}^N q_\alpha \biggl ( D_v^k F^{\alpha,\ell} \left(t, \frac{x}{t} - \xi \right)  - D^k_vF^{\alpha,\ell}_\infty \left(\frac{x}{t} - \xi \right) \biggr ) \ d\xi \right | \\
	& \qquad  + t^{-1} \max_{\alpha = 1, ..., N} \| \nabla_v^{k+\ell+1} g^\alpha(t) \|_\infty \\
	& =: I + II + III.
\end{align*}

To estimate $I$, we merely use Lemma \ref{LE} so that
$$I \lesssim \sum_{m=0}^{\ell-1} t^{\ell-m} \|\nabla_x^k \rho_m(t) \|_\infty.$$
The Taylor remainder term in $III$ merely satisfies 
$$III \lesssim  t^{-1} \mcG_v^{k+\ell+1}(t).$$
Lastly, the limiting term within $II$ is transformed into
$$II =  \left |\frac{1}{4\pi} \int \frac{\xi_j}{|\xi|^3} \partial_{v_i}\mcA\left(t, \frac{x}{t} -\xi \right) \ d\xi \right |$$
for some $i = 1, 2, 3$, where $\mcA(t,v)$ is defined by 
\begin{equation*}
\label{G1}
\mcA(t,v) =  \sum_{\alpha = 1}^N q_\alpha \left( D^{k-1}_v F^{\alpha,\ell}(t,v) - D^{k-1}_vF^{\alpha,\ell}_\infty(v) \right)
\end{equation*}
and the $k$th order derivative is decomposed via $D_v^k = \partial_{v_i} D_v^{k-1}$.
From this definition we first note
\begin{equation}
\label{DA1}
\| \partial_{v_i}\mcA(t) \|_\infty \lesssim \mcG_v^{k+\ell}(t)
\end{equation}
and, because each $F^{\alpha, \ell}_\infty(v)$ is compactly supported and each $g^\alpha(t,x,v)$ has bounded support both in $x$, due to \eqref{Sxg}, and in the phase space $(x,v)$ from Lemma \ref{Sg}, we find
$$\Vert \mcA(t) \Vert_1 + \Vert \mcA(t) \Vert_\infty \lesssim \max_{\alpha = 1, ..., N} \Vert \nabla^{k-1}_vF^{\alpha,\ell}(t) - \nabla^{k-1}_vF^{\alpha,\ell}_\infty \Vert_\infty.$$
Then, decomposing the difference of derivatives and using the identity
$$\partial_{v_i}\mcA \left(t,  \frac{x}{t} - \xi \right) = -  \partial_{\xi_i}\left [ \mcA \left(t,  \frac{x}{t} - \xi \right) \right ]$$ 
with an integration by parts away from the singularity, letting $0<d<R<\infty$ to be chosen later, we find
\begin{eqnarray*}
II & \lesssim & \left | \int_{|\xi| < d} \frac{\xi_j}{|\xi|^3} \partial_{v_i} \mcA\left(t,\frac{x}{t}-\xi \right) \ d\xi \right | +  \left | \int_{|\xi| = d} \frac{\xi_i\xi_j}{|\xi|^4} \mcA \left(t,\frac{x}{t}-\xi \right)  \ dS_\xi \right |\\
& &  + \left | \int_{d < |\xi| < R}  \partial_{\xi_i}  \left( \frac{\xi_j}{|\xi|^3} \right) \mcA\left(t,\frac{x}{t}-\xi \right) \ d\xi \right | + \left | \int_{|\xi| > R} \partial_{\xi_i}  \left( \frac{\xi_j}{|\xi|^3} \right) \mcA\left(t,\frac{x}{t}-\xi \right)  \ d\xi \right |\\
& =: &  II_A + II_B + II_C + II_D.
\end{eqnarray*}
The estimate of $II_A$ merely uses \eqref{DA1} so that
$$II_A \lesssim d \mcG_v^{k+\ell}(t).$$
We use the previous $L^\infty$ bound to estimate $II_B$, which yields
$$II_B \lesssim \| \mcA(t) \|_\infty \left( \int_{|\xi| = d} |\xi|^{-2} dS_\xi \right) \lesssim  \| \mcA(t) \|_\infty \lesssim  \max_{\alpha = 1, ..., N} \Vert \nabla^{k-1}_vF^{\alpha,\ell}(t) - \nabla^{k-1}_vF^{\alpha,\ell}_\infty \Vert_\infty.$$
Similarly, this provides an estimate of $II_C$ as
\begin{eqnarray*}
II_C & \lesssim & \int_{d < |\xi| < R} |\xi|^{-3}  \left |\mcA \left (t,\frac{x}{t}-\xi \right) \right | \ d\xi\\
& \lesssim & \| \mcA(t) \|_\infty \ln \left(\frac{R}{d}\right) \lesssim \ln \left(\frac{R}{d}\right) \max_{\alpha = 1, ..., N} \Vert \nabla^{k-1}_vF^{\alpha,\ell}(t) - \nabla^{k-1}_vF^{\alpha,\ell}_\infty \Vert_\infty.
\end{eqnarray*}
Finally, we use the $L^1$ bound to estimate $II_D$ so that
$$II_D \lesssim \int_{|\xi| > R} |\xi|^{-3}  \left |\mcA \left (t,\frac{x}{t}-\xi \right) \right | \ d\xi \lesssim R^{-3} \| \mcA(t) \|_1 \lesssim R^{-3} \max_{\alpha = 1, ..., N} \Vert \nabla^{k-1}_vF^{\alpha,\ell}(t) - \nabla^{k-1}_vF^{\alpha,\ell}_\infty \Vert_\infty.$$
Collecting these estimates gives
$$II \lesssim d \mcG_v^{k+\ell}(t) + \left( 1 + \ln \left(\frac{R}{d} \right) + R^{-3} \right) \max_{\alpha = 1, ..., N} \Vert \nabla^{k-1}_vF^{\alpha,\ell}(t) - \nabla^{k-1}_vF^{\alpha,\ell}_\infty \Vert_\infty.$$
Choosing $d = t^{-1}$ with $R^{-3} = \ln(t)$ implies $\ln \left(\frac{R}{d} \right) \lesssim \ln(t)$ and, upon including the estimates for $I$ and $III$, yields the first stated result.

The second result is nearly identical with the exception that no Taylor expansion is required.
Thus, taking any $k$th order derivative with respect to $x$ of the $j$th component of \eqref{E} as above, we find
\begin{align*}
t^{k+2} D^k_xE^j(t,x)  - D^k_v E^j_{0,\infty}\left(\frac{x}{t} \right)
&= \frac{1}{4\pi} \int \frac{\xi_j}{|\xi|^3}  \sum_{\alpha = 1}^N q_\alpha \left(\int D^k_vg^\alpha \left(t, w, \frac{x-w}{t} - \xi \right)  dw - D^k_vF^{\alpha,0}_\infty \left(\frac{x}{t} - \xi \right) \right) d\xi\\
& = \frac{1}{4\pi} \int \frac{\xi_j}{|\xi|^3}  \sum_{\alpha = 1}^N q_\alpha \int D^k_v \left [ g^\alpha \left(t, w, \frac{x-w}{t} - \xi \right) - g^\alpha \left(t, w, \frac{x}{t} - \xi \right)\right ]  dw \, d\xi\\
& \qquad  \qquad + \frac{1}{4\pi} \int \frac{\xi_j}{|\xi|^3}  \sum_{\alpha = 1}^N q_\alpha \left [D^k_v F^{\alpha,0} \left(t, \frac{x}{t} - \xi \right) - D^k_vF^{\alpha,0}_\infty \left(\frac{x}{t} - \xi \right) \right ]\ d\xi\\
& = \frac{1}{4\pi} \int \frac{\xi_j}{|\xi|^3} \left [ \partial_{v_i} \mcA_1\left(t,\frac{x}{t}-\xi \right) + \partial_{v_i}\mcA_2 \left(t,\frac{x}{t}-\xi \right) \right ] \ d\xi
\end{align*}
for some $i = 1, 2, 3$ where $\mcA_1(t,v)$ and $\mcA_2(t,v)$ are defined by 
\begin{equation*}
\label{G1_1}
\mcA_1(t,v) =  \sum_{\alpha = 1}^N q_\alpha \int D_v^{k-1}\left [g^\alpha \left(t, w, v  - \frac{w}{t} \right) - g^\alpha(t,w, v) \right ] dw,
\end{equation*}
and
\begin{equation*}
\label{G1_2}
\mcA_2(t,v) = \sum_{\alpha = 1}^N q_\alpha D^{k-1}_{v}\left [F^{\alpha,0}(t,v) - F^{\alpha,0}_\infty(v) \right ],
\end{equation*}
respectively. 
As before, the $k$th order derivative satisfies $D_v^k = \partial_{v_i} D_v^{k-1}$.
From these definitions we note that
$$\| \partial_{v_i} \mcA_1(t) \|_\infty + \Vert \partial_{v_i}\mcA_2(t) \|_\infty \lesssim \mcG_v^k(t).$$
To control the $\mcA_1$ term, we use the uniform-in-time bound on the spatial support of $g^\alpha$ from \eqref{Sxg} and velocity derivative estimates, which yields
\begin{align*}
\| \mcA_1(t) \|_\infty
& = \sup_{v \in \bfR^3} \left | \sum_{\alpha = 1}^N q_\alpha \int D^{k-1}_v \left [g^\alpha \left(t, w, v  - \frac{w}{t} \right) -  g^\alpha(t,w, v) \right ] \ dw \right |\\
& \lesssim  \sup_{v \in \bfR^3} \sum_{\alpha = 1}^N \int \left |\int_0^1 \frac{d}{d\theta} \left(D^{k-1}_v g^\alpha \left(t, w, v - \theta \frac{w}{t} \right)  \right)  d\theta \right |  dw\\
& \lesssim  t^{-1} \sup_{v \in \bfR^3} \sum_{\alpha = 1}^N \int_0^1 \int |w| \left |\nabla_v D^{k-1}_v g^\alpha \left(t, w, v - \theta \frac{w}{t} \right) \right |  dw d\theta\\
& \lesssim t^{-1} \mcG_v^k(t).
\end{align*}
Because of the bounded phase space support of $g^\alpha(t,x,v)$ from Lemma \ref{Sg}, we further find
$$\| \mcA_1(t) \|_1  \lesssim t^{-1} \mcG_v^k(t).$$
Additionally, because each $g^\alpha(t,x,v)$ has bounded support both in $x$ and in the phase space $(x,v)$ and $F^{\alpha, 0}_\infty$ has compact support, we find
$$\Vert \mcA_2(t) \Vert_1 + \Vert \mcA_2(t) \Vert_\infty \lesssim \max_{\alpha = 1, ..., N} \Vert \nabla^{k-1}_vF^{\alpha,0}(t) - \nabla^{k-1}_vF^{\alpha,0}_\infty \Vert_\infty.$$
Replacing $\mcA(t,v)$ with $\mcA_1(t,v) + \mcA_2(t,v)$ within the decomposition of $II$ above and using these estimates then gives the stated result.
\end{proof}

\subsection{Improved Bounds on the Particle Density}

The final lemma provides control of the charge density in terms of previous approximations and derivatives of $g^\alpha$.

\begin{lemma}
\label{LDensitygeneral}
For any $\ell \in \mathbb{N}$, we have
$$\sup_{x \in \bfR^3} \left | t^{\ell+3} \rho(t,x) - \rho_{\ell,\infty} \left(\frac{x}{t} \right) \right | \lesssim \sum_{j=0}^{\ell-1}  t^{\ell - j}\left \|  \rho_j(t) \right \|_\infty + \max_{\alpha = 1,...,N} \left \| F^{\alpha,\ell}(t) - F^{\alpha,\ell}_\infty \right \|_\infty  + t^{-1} \mcG_v^{\ell +1}(t) .$$
\end{lemma}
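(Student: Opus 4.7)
The plan is to mimic the strategy of Lemmas \ref{DEgeneral} and \ref{Drhogeneral}, but applied directly to $\rho(t,x)$ rather than to $E(t,x)$ or to the approximations $\rho_\ell$. The starting point is the representation \eqref{rho}, multiplied by $t^{\ell+3}$, which gives
$$t^{\ell+3}\rho(t,x) = t^{\ell}\sum_{\alpha=1}^N q_\alpha \int g^\alpha\!\left(t,y,\tfrac{x-y}{t}\right)dy.$$
I would then insert the multi-dimensional Taylor expansion \eqref{g:taylor} of $g^\alpha$ at order $\ell$ around the point $(y, x/t)$, which yields
$$t^{\ell+3}\rho(t,x) = \sum_{m=0}^{\ell} t^{\ell - m}\sum_{\alpha=1}^N q_\alpha \sum_{|\beta|=m}\frac{1}{\beta!}\int (-y)^\beta D_v^\beta g^\alpha\!\left(t,y,\tfrac{x}{t}\right)dy + t^{-1}\sum_{\alpha=1}^N q_\alpha R^\alpha(t,x),$$
where the Taylor remainder is
$$R^\alpha(t,x) = \sum_{|\beta|=\ell+1}\frac{1}{\beta!}\int (-y)^\beta D_v^\beta g^\alpha\!\left(t,y,\tfrac{x-\theta^\alpha y}{t}\right)dy.$$

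The next step is simple identification: by the definitions \eqref{eq:f-al} and \eqref{eq:rho-l}, the leading sum collapses to $\sum_{m=0}^\ell t^{\ell-m}\rho_m(t,x)$. Isolating the $m=\ell$ term and subtracting $\rho_{\ell,\infty}(x/t)$ produces the decomposition
$$t^{\ell+3}\rho(t,x) - \rho_{\ell,\infty}\!\left(\tfrac{x}{t}\right) = \sum_{m=0}^{\ell-1} t^{\ell-m}\rho_m(t,x) + \left[\rho_\ell(t,x) - \rho_{\ell,\infty}\!\left(\tfrac{x}{t}\right)\right] + t^{-1}\sum_{\alpha=1}^N q_\alpha R^\alpha(t,x).$$
The first sum is bounded termwise by $t^{\ell-m}\|\rho_m(t)\|_\infty$. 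The middle bracket, by \eqref{eq:rho-l} and \eqref{eq:rho-l-inf}, satisfies
$$\left|\rho_\ell(t,x)-\rho_{\ell,\infty}\!\left(\tfrac{x}{t}\right)\right| \lesssim \max_{\alpha=1,\dots,N}\|F^{\alpha,\ell}(t) - F^{\alpha,\ell}_\infty\|_\infty,$$
which is the analogue of the $k=0$ case of Lemma \ref{Drhogeneral}.

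For the Taylor remainder term, I would rely on the uniform-in-time bound on the spatial support of $g^\alpha$ established in \eqref{Sxg}: because the $y$-integration in $R^\alpha(t,x)$ is restricted to a set of uniformly bounded measure where $|y|$ is also uniformly bounded, we obtain
$$|R^\alpha(t,x)| \lesssim \max_{|\beta|=\ell+1}\|D_v^\beta g^\alpha(t)\|_\infty \lesssim \mcG_v^{\ell+1}(t).$$
Taking supremum over $x\in\bfR^3$ and summing the three contributions yields exactly the claimed bound. There is no real obstacle here beyond careful bookkeeping; unlike in Lemma \ref{DEgeneral} or \ref{LDEgeneral}, no integration against the singular Coulomb kernel $\xi/|\xi|^3$ is required, so the proof avoids the inner/outer $\xi$-decomposition and the associated logarithmic losses that appear in the field estimates.
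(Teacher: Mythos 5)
Your proposal follows essentially the same route as the paper: multiply \eqref{rho} by $t^{\ell+3}$, insert the Taylor expansion \eqref{g:taylor}, identify the partial sums with $\sum_{m=0}^{\ell}t^{\ell-m}\rho_m(t,x)$ via \eqref{eq:f-al} and \eqref{eq:rho-l}, bound the remainder by $t^{-1}\mcG_v^{\ell+1}(t)$ using the uniform spatial support bound \eqref{Sxg}, and control $|\rho_\ell(t,x)-\rho_{\ell,\infty}(x/t)|$ by $\max_\alpha\|F^{\alpha,\ell}(t)-F^{\alpha,\ell}_\infty\|_\infty$. The paper's proof is a slightly more compact bookkeeping of the same decomposition, and your closing observation that no singular-kernel manipulation is needed here is correct.
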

\begin{proof}
Beginning with \eqref{rho}, which yields 
$$\rho(t,x) = \frac{1}{t^3}  \sum_{\alpha = 1}^N q_\alpha \int g^\alpha \left(t, y, \frac{x-y}{t} \right) \ dy,$$
we merely use the Taylor expansion \eqref{g:taylor} and the definitions \eqref{eq:f-al} and \eqref{eq:rho-l} to write
$$t^3 \rho(t,x) = \sum_{j=0}^{\ell-1} t^{-j}\rho_j(t,x) + t^{-\ell} \rho_\ell(t,x) + t^{-\ell-1} \mathcal{R}(t)$$
where $\left|\mathcal{R}(t)\right|\lesssim \mcG_v^{\ell+1}(t)$.
By the definitions in \eqref{eq:rho-l} and \eqref{eq:rho-l-inf}, the $\ell$th order approximation of $\rho(t,x)$ satisfies
$$\left | \rho_\ell(t,x) - \rho_{\ell,\infty} \left( \frac{x}{t} \right) \right | \lesssim \max_{\alpha = 1,...,N} \left | F^{\alpha, \ell} \left( t, \frac{x}{t} \right) - F^{\alpha, \ell}_\infty \left(\frac{x}{t} \right) \right |.$$
Thus, expanding $\rho(t,x)$ as above, we find
\begin{align*}
\left | t^{\ell+3} \rho(t,x) - \rho_{\ell,\infty} \left(\frac{x}{t} \right) \right | &\leq  \left | \sum_{j=0}^{\ell -1} t^{\ell - j} \rho_j(t,x) +  \rho_{\ell}(t,x) - \rho_{\ell,\infty} \left( \frac{x}{t} \right) \right | + t^{-1} \mcG_v^{\ell+1}(t) \\ 
&\lesssim  \sum_{j=0}^{\ell-1} t^{\ell - j}\left \|  \rho_j(t) \right \|_\infty + \left | \rho_\ell(t,x) - \rho_{\ell,\infty} \left( \frac{x}{t} \right) \right | + t^{-1} \mcG_v^{\ell+1}(t)\\
&\lesssim  \sum_{j=0}^{\ell-1} t^{\ell - j}\left \|  \rho_j(t) \right \|_\infty + \max_{\alpha = 1,...,N} \left | F^{\alpha, \ell} \left( t, \frac{x}{t} \right) - F^{\alpha, \ell}_\infty \left(\frac{x}{t} \right) \right | + t^{-1} \mcG_v^{\ell+1}(t),
\end{align*}
which completes the proof.
\end{proof}

\section{Proofs of Theorems}
\label{sec:proofs}

First, we establish Theorem \ref{T2}, which will be used later to prove Theorem \ref{T1}.

\begin{proof}[Proof of Theorem \ref{T2}]
We will prove the result by induction. 
In particular,  we consider the statement of the proof to be of the form - for every $n \in \N_0$, $P_n \Rightarrow Q_n$ where
$P_n$ represents the statement
$$\forall \ell=0,\dots,n,\qquad\rho_{\ell,\infty}\equiv 0,$$
while $Q_n$ represents the collection of statements
$$ \left \{ \,
\begin{aligned}
\sup_{x \in \mathbb{R}^3} \left | t^{n+4}\rho(t) - \rho_{n+1, \infty} \left( \frac{x}{t} \right) \right | & \lesssim t^{-1},\\
\sup_{x \in \mathbb{R}^3} \left |t^{n+3} E(t) - E_{n+1,\infty} \left( \frac{x}{t} \right) \right | & \lesssim t^{-1},\\
\sup_{x \in \mathbb{R}^3} \left |t^{n+3}\nabla^k_xE(t) - \nabla_v^k E_{n+1-k,\infty} \left( \frac{x}{t} \right) \right | & \lesssim t^{-1}, \qquad \forall k =1, ..., n+1 \\
\sup_{x \in \mathbb{R}^3} \left |t^{n+4}\nabla^{n+2}_xE(t) - \nabla_v^{n+2}E_{0,\infty} \left( \frac{x}{t} \right) \right | & \lesssim t^{-1}\ln(t),\\
 \mcG^k_{x,v}(t) + \mcG^k_{v}(t) & \lesssim 1, \qquad \forall k = 1, ..., n+2,
\end{aligned}
\right. $$
%where $P_n$ and $Q_n$ are as follows:
%\begin{center}
%	\begin{tabular}{ |l|l|} 
%	 \hline
%	\multicolumn{1}{|c|}{$P_n$} &	 \multicolumn{1}{|c|}{$Q_n$}  \\ 
%	 &\\
%	 $\exists p \in \left(\frac{5}{3}, 2 \right]$ s.t. $\|E(t) \|_\infty \lesssim t^{-p}$ & $\| t^{n+4}\rho(t) - \rho_{n+1, \infty} \|_\infty \lesssim t^{-1}$\quad \text{and}  \quad$\| t^{n+3} E(t) - E_{n+1,\infty} \|_\infty \lesssim t^{-1}$ \\
%$\mathcal{M}=0$
%&$\forall k \in \{1, ..., n+1\}$: \quad$\| t^{n+3}D^k_xE(t) - D_v^k E_{n+1-k,\infty} \|_\infty \lesssim t^{-1} ,$\\ 
%& \qquad\qquad\qquad\qquad\qquad$\| t^{n+4}D^{n+2}_xE(t) - D_v^{n+2}E_{0,\infty} \|_\infty \lesssim t^{-1}\ln(t) ,$\\ 
%	$\forall k \in \{0,1,\dots,n\}$:\qquad $\rho_{k,\infty}\equiv0$& $\forall k \in \{1, ..., n+2\}$: \quad$\mcG^k_{x,v}(t) + \mcG^k_{v}(t) \lesssim 1$ \\
%	 \hline
%	\end{tabular}
%	\end{center}
where we remind the reader that $\mcG^k_{v}$ and $\mcG^k_{x,v}$ are defined in \eqref{eq:G^k_v} and \eqref{eq:G^k_x,v}, respectively.
Then, the proof requires that we show
\begin{enumerate}
\item Base case: $P_0 \Rightarrow Q_0$,
\item Inductive step: assume $P_{n-1} \Rightarrow Q_{n-1}$ and show $P_n \Rightarrow Q_n$.
\end{enumerate}

\subsection{Base case}
To prove the base case, we assume $\rho_{0,\infty} \equiv 0$, which further implies $E_{0,\infty} \equiv 0$. 
Due to Theorem \ref{oldT2} this assumption provides the estimates
\begin{equation}
\label{BC}
\left \{ \,
\begin{aligned}
\| F^{\alpha,0}(t) - F^{\alpha,0}_\infty \|_\infty & \lesssim t^{-2}, \qquad \alpha = 1, ..., N\\
\| \rho(t) \|_\infty & \lesssim t^{-4},\\
\| E(t) \|_\infty & \lesssim t^{-3}, \\ 
\| \nabla_xE(t) \|_\infty & \lesssim t^{-4} \ln(t),\\
\mcG_v^1(t) +  \mcG_{x,v}^1(t) & \lesssim 1.
\end{aligned}
\right .
\end{equation}

With this improved decay (and applying Lemma \ref{LDE} with $k=2$), we construct initial estimates on 2nd-order derivative terms with a lemma whose proof is postposed until the following section.
\begin{lemma}
\label{Dng0}
Assume  that $\rho_{0,\infty}\equiv 0$, which implies \eqref{BC}.
Then, we have
%\begin{equation}
%\label{DngDnE}
%\mcG_v^{n+2}(t) \lesssim 1 + \int_1^t \biggl[ s^{n+3} \|D_x^{n+2}E(s)\|_{\infty} + s \Vert D_xE(s) \Vert_\infty \left( s  \mcG_{x,v}^{n+2}(s)  + \mcG_v^{n+2}(s) \right)\biggr ] \ ds,
%\end{equation}
\begin{equation}
\label{DngDnE0}
\mcG_v^{2}(t) \lesssim 1 + \int_1^t s^{3} \|\nabla_x^{2}E(s)\|_{\infty} \ ds,
\end{equation}
and, as a preliminary estimate,
$$\Vert \nabla_x^{2} E(t) \Vert_\infty \lesssim t^{-4} \ln(t), \qquad \mcG_{x,v}^{2}(t) \lesssim 1, \qquad \mathrm{and} \qquad  \mcG_v^{2}(t) \lesssim \ln^2(t).$$
\end{lemma}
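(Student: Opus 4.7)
The plan is to treat $\mcG_v^2$ and $\mcG_{x,v}^2$ as the solution of a coupled system of integral inequalities obtained by integrating derivatives of \eqref{VPg} along its characteristics, and then close the system via a bootstrap with Lemma \ref{LDE}. The key structural observation is that the transport vector field in \eqref{VPg} is divergence-free in $(x,v)$, so the flow preserves phase-space volume and pointwise (hence sup-norm) estimates for derivatives of $g^\alpha$ follow from integrating source terms along characteristics.

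First, I would fix a multi-index $\beta$ with $|\beta|=2$ and commute $D_v^\beta$ through the transport operator via the Leibniz rule and the identity $D_v[E(t,x+vt)] = t\,\nabla_x E(t,x+vt)$. This produces a transport equation for $D_v^\beta g^\alpha$ driven by source terms of the form
\[
t^{|\beta-\gamma|+1}\, D_x^{\beta-\gamma} E(t,x+vt)\cdot D_v^\gamma \nabla_x g^\alpha \quad\text{and}\quad t^{|\beta-\gamma|}\, D_x^{\beta-\gamma} E(t,x+vt)\cdot D_v^\gamma \nabla_v g^\alpha
\]
for $\gamma\preceq\beta$ with $|\gamma|\leq 1$. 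Integrating along characteristics and inserting the base-case bounds $\mcG_v^1 + \mcG_{x,v}^1 \lesssim 1$ and $\|\nabla_x E(s)\|_\infty \lesssim s^{-4}\ln s$ from \eqref{BC}, the $|\gamma|=0$ terms contribute at most $s^3 \|\nabla_x^2 E(s)\|_\infty$, while the $|\gamma|=1$ terms contribute at most $s^{-2}\ln(s)\,\mcG_{x,v}^2(s) + s^{-3}\ln(s)\,\mcG_v^2(s)$. Since the latter coefficients are time-integrable, Grönwall's inequality absorbs the self-referential terms, yielding the stated estimate \eqref{DngDnE0}. The analogous computation for mixed derivatives $D_v^\beta D_x^\gamma g^\alpha$ with $|\beta|+|\gamma|=2$ and $|\gamma|\geq 1$—where each $x$-derivative on $E(t,x+vt)$ contributes no $t$-factor—produces only source coefficients at most $s^2\|\nabla_x^2 E(s)\|_\infty$, giving the preliminary bound $\mcG_{x,v}^2(t) \lesssim 1 + \int_1^t s^2 \|\nabla_x^2 E(s)\|_\infty\, ds$.

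To close the bootstrap, I would invoke Lemma \ref{LDE} with $k=2$ (whose hypothesis $\mcG_v^1\lesssim 1$ is delivered by \eqref{BC}) to obtain $\|\nabla_x^2 E(t)\|_\infty \lesssim t^{-4}(1+\ln^*\mcG_v^2(t))$. Substituting into \eqref{DngDnE0} produces the logarithmic Grönwall inequality
\[
\mcG_v^2(t) \lesssim 1 + \int_1^t s^{-1}\bigl(1+\ln^*\mcG_v^2(s)\bigr)\, ds.
\]
Using the crude polynomial bound $\mcG_v^2(s)\lesssim s^C$ coming from global well-posedness as initial input and iterating this inequality once yields the self-consistent bound $\mcG_v^2(t)\lesssim \ln^2 t$. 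Feeding this back into Lemma \ref{LDE} delivers $\|\nabla_x^2 E(t)\|_\infty \lesssim t^{-4}(1+\ln\ln^2 t)\lesssim t^{-4}\ln t$, and then the preliminary bound for $\mcG_{x,v}^2$ collapses to $\mcG_{x,v}^2(t) \lesssim 1$ since $\int_1^\infty s^{-2}\ln s\, ds <\infty$.

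The main obstacle is the feedback loop between $\mcG_v^2$ and $\|\nabla_x^2 E\|_\infty$ together with the need to verify that \emph{every} self-referential coefficient produced by differentiating the transport equation is time-integrable. This closure hinges quantitatively on the base-case decay $\|\nabla_x E(s)\|_\infty \lesssim s^{-4}\ln s$: had this rate been appreciably weaker, the coefficient $s\|\nabla_x E(s)\|_\infty$ multiplying $\mcG_v^2(s)$ in the Grönwall step would fail to be integrable and the entire argument would collapse. The residual bookkeeping of time powers in the Leibniz expansion is routine but must be done carefully to match the loss of a factor $t$ per $v$-derivative landing on $E(t,x+vt)$.
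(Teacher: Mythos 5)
Your approach captures the essential mechanism of the paper's proof: commute second-order derivatives through the transport operator in \eqref{VPg}, integrate along characteristics, set up a coupled Gr\"onwall inequality for $\mcG_v^2$ and $\mcG_{x,v}^2$, and close the feedback between $\|\nabla_x^2 E\|_\infty$ and $\mcG_v^2$ via Lemma~\ref{LDE}. Your bookkeeping of the source terms is also correct. However, there is a genuine gap in the bootstrap.

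The paper does not work directly with $\mcG_v^2$ and $\mcG_{x,v}^2$. Instead it introduces the \emph{weighted} quantity
$\mcD_{n+2}(t) = 1 + \sum_{j} t^{-j} \mcD^j_{n+2}(t)$,
which assigns a factor $t^{-j}$ to each derivative according to the number of $v$-derivatives it carries. The payoff is that in the Gr\"onwall inequality for $\mcD_{n+2}$, \emph{every} coefficient (including the one multiplying the $\ln^*$ feedback from Lemma~\ref{LDE}) becomes time-integrable (it appears as $s^{-3-n}\ln^*(\mcD_{n+2}(s))$, which is then bounded by $s^{-2-n}\mcD_{n+2}(s)$ using $\ln^*x \lesssim x$). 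This makes the Gr\"onwall inequality \emph{linear} and immediately yields $\mcD_{n+2}(t) \lesssim 1$, which in turn gives the polynomial bound $\mcG_v^2(t) \lesssim \mcD^2_2(t) \lesssim t^2$ as a corollary, from which Step~2 and Step~3 of the paper proceed.

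In contrast, your unweighted inequality $\mcG_v^2(t) \lesssim 1 + \int_1^t s^{-1}(1+\ln^*\mcG_v^2(s))\,ds$ has the non-integrable coefficient $s^{-1}$ on the logarithmic feedback term, so it is \emph{not} a linear Gr\"onwall. You then appeal to a ``crude polynomial bound $\mcG_v^2(s) \lesssim s^C$ coming from global well-posedness,'' but this is not justified: the classical global existence theory (Pfaffelmoser, Schaeffer, Lions--Perthame) gives boundedness of derivatives of $f^\alpha$ (and hence of $g^\alpha$) on compact time intervals, but says nothing about polynomial growth on $(0,\infty)$; in general one only expects an exponential a priori estimate. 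The polynomial bound is precisely what the paper establishes via the $\mcD_{n+2}$ argument, so assuming it gratuitously begs the question. Your argument \emph{can} be closed by replacing the iteration with a nonlinear Gr\"onwall comparison against the ODE $u' = Cs^{-1}(1+\ln^* u)$, which only requires local boundedness of $\mcG_v^2$ and in fact yields a bound of order $\ln t \cdot \ln\ln t \lesssim \ln^2 t$ -- but you do not articulate this, and as written the step is a gap. As a secondary imprecision, your stated inequality for $\mcG_{x,v}^2$ omits the $|\gamma|=1$ terms involving $\mcG_v^2$ and $\mcG_{x,v}^2$ with coefficients $s^{-4}\ln s$ and $s^{-3}\ln s$; these are integrable and absorbed, so the final result is unaffected, but they do belong in the coupled system.
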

\begin{proof}
The proof is contained in Section \ref{sec:bounds-derivs-g} below.
\end{proof}

%From Lemma \ref{LDE} with $k=2$ we know that
%$$\| \nabla_x^2E(t) \|_\infty \lesssim t^{-4} \left [ 1 + \ln^* \left( \mcG_v^2(t) \right) \right ].$$
%Combining this estimate with the results of Lemma \ref{Dng0}, we find
%$$
%\begin{gathered}
%\| \nabla_x^2E(t) \|_\infty \lesssim t^{-4} \ln(t),\\
%\mcG_{x,v}^2(t) \lesssim 1, \\
% \mcG_v^2(t) \lesssim \ln^2(t).
%\end{gathered}
%$$

With these preliminary estimates in hand, we now establish the convergence of $\nabla_vF^{\alpha,0}(t,v)$.
First, using Lemma \ref{dvkrho} with $k=1$ and \eqref{BC} we find
$$\| \nabla_x \rho(s)\|_\infty \lesssim t^{-4}\mcG_v^1(t) \lesssim t^{-4}.$$
Then, invoking Lemma \ref{dvkFalpha0} with $k=1$ and using this together with  \eqref{BC} and the preliminary estimate of $\mcG_v^2(t)$ in Lemma \ref{Dng0} we have
\begin{align*}
\infnorm{\nabla_v F^{\alpha,0}(t) - \nabla_v F^{\alpha,0}_\infty} 
& \lesssim \int_t^\infty \Big [s^2 \| \nabla_x \rho(s)\|_\infty + s \mcG_v^1(s)\big(\infnorm{\rho(s)}+\infnorm{\nabla_x E(s)}\big) + \infnorm{E(s)} \mcG_v^2(s) \Big ] ds\\
& \lesssim \int_t^\infty s^{-2} \ ds  \lesssim t^{-1}.
\end{align*}
Hence, the function $F^{\alpha,0}(t,v)$
satisfies
\begin{equation}
\label{dF0}
\| \nabla_v F^{\alpha,0}(t) - \nabla_vF^{\alpha,0}_\infty \|_\infty \lesssim t^{-1}
\end{equation}
for any $\alpha = 1, ..., N$,
which provides the existence and regularity of the limiting function $\nabla_v F^{\alpha,0}_\infty(v)$, as mentioned within Remark \ref{Finf}.

%Therefore, using Lemma \ref{Drhogeneral} with $k=1$ and $\ell = 0$, we have
%$$ \sup_{x \in \bfR^3} \left | t \nabla_x \rho_0(t,x)  - \nabla_v \rho_{0,\infty}\left( \frac{x}{t} \right) \right |  \lesssim t^{-1}.$$ 
%As $\rho_{0,\infty} \equiv 0$, this yields
%$$ \|\nabla_x \rho_0(t) \|_\infty  \lesssim t^{-2}.$$ 

Next, we use this to improve the preliminary estimate of second-order field derivatives with Lemma \ref{LDEgeneral}. In particular, we note that this result provides an estimate in terms of $\nabla_v F$, while use of Lemma \ref{DEgeneral} would require us to first estimate
$\nabla_v^2 F$.
With the convergence result for $\nabla_v F^{\alpha,0}(t,v)$ in place, we use Lemma \ref{LDEgeneral} with $k=2$ and $\ell = 0$, the preliminary estimate of $\mcG_v^2(t)$ in Lemma \ref{Dng0}, and \eqref{dF0} to conclude 
$$\sup_{x \in \mathbb{R}^3} \left | t^4 \nabla_x^2E(t,x) - \nabla_v^2 E_{0,\infty} \left( \frac{x}{t} \right) \right | \lesssim \ln(t) \biggl ( \Vert \nabla_v F^{\alpha, 0}(t) -  \nabla_v F^{\alpha, 0}_\infty(t) \Vert_\infty +  t^{-1} \mcG_v^2(t) \biggr ) \lesssim t^{-1}\ln^3(t).$$
Because the limiting function vanishes, i.e. $E_{0,\infty} \equiv 0$, this implies a refined decay rate of second-order field derivatives, namely
$$\| \nabla_x^2E(t) \|_\infty \lesssim t^{-5}\ln^3(t).$$
This faster decay then implies the uniform bound $\mcG_v^2(t) \lesssim 1$ 
by revisiting \eqref{DngDnE0},
so that
\begin{equation}
\label{Gv2}
\mcG_{x,v}^2(t) + \mcG_v^2(t) \lesssim 1.
\end{equation}
This further yields
\begin{equation}
\label{Dx2E}
\sup_{x \in \mathbb{R}^3} \left | t^4 \nabla_x^2E(t,x) - \nabla_v^2 E_{0,\infty} \left( \frac{x}{t} \right) \right | \lesssim t^{-1}\ln(t)
\end{equation}
by using \eqref{Gv2} within the above estimate.
As before, because the limiting function $E_{0,\infty}$ vanishes, the second-order derivatives of the electric field then satisfy
$$\| \nabla_x^2E(t) \|_\infty \lesssim t^{-5}\ln(t).$$

Next, we improve the decay estimate on first-order field derivatives. In this direction, invoking Lemma \ref{DEgeneral} with $k=1$ and $\ell = 0$ and using \eqref{dF0} and \eqref{Gv2} implies
\begin{equation}
\label{Dx1E}
\sup_{x \in \bfR^3} \left | t^3 \nabla_xE(t,x) - \nabla_vE_{0,\infty}\left(\frac{x}{t} \right) \right | \lesssim \Vert \nabla_v F^{\alpha, 0}(t) -  \nabla_v F^{\alpha, 0}_\infty(t) \Vert_\infty +  t^{-1} \mcG_v^2(t)  \lesssim t^{-1},
\end{equation}
which, due to $E_{0,\infty} \equiv 0$, gives
$$ \Vert \nabla_xE(t) \Vert_\infty \lesssim t^{-4}.$$ 

This improved decay now allows us to sharply estimate the convergence rate of $F^{\alpha,1}(t,v)$. 
Hence, we invoke Lemma \ref{dvkFell} with $k=0$ and $\ell = 1$, the established decay rates of the field and field derivatives, and \eqref{Gv2} so that the function
$$F^{\alpha,1}(t,v) = \int (-x) \cdot \nabla_v g^\alpha(t,x,v) \ dx$$
satisfies
\begin{align*}
\| F^{\alpha,1}(t) - F^{\alpha,1}_\infty \|_\infty & \lesssim \int_t^\infty  \left [s \Vert \nabla_x E(s) \Vert_\infty \biggl ( s\mcG_{x,v}^{1}(s) + \mcG_v^{1}(s) \biggr) + \Vert E(s) \Vert_\infty \biggl ( s\mcG_{x,v}^{2}(s) + \mcG_v^{2}(s) \biggr) \right ] ds\\
& \lesssim  \int_t^\infty  s^{-2} \ ds  \lesssim t^{-1}
\end{align*}
for any $\alpha = 1, ..., N$,
which establishes the existence of the limiting function $F^{\alpha,1}_\infty(v)$, as mentioned within Remark \ref{Finf}.
%where for all $v \in \bfR^3$
%$$F^{\alpha,1}_\infty(v) =  \lim_{t\to\infty} F^{\alpha,1}(t,v) = \int  (-x)\cdot \nabla_v f^\alpha_\infty(x,v) \ dx.$$
Using this quantity, we define the next order limiting charge density and field via
$$\rho_{1,\infty}(v) = \sum_{\alpha = 1}^N q_\alpha F_\infty^{1,\alpha}(v)
\qquad \mathrm{and} \qquad
E_{1,\infty}(v) =\nabla_v (\Delta_v)^{-1} \rho_{1,\infty}(v).$$

Prior to using these limits to obtain the next order of convergence in the charge density and electric field, 
we invoke Lemma \ref{Drhogeneral} with $k=0$ and $\ell = 0$ and \eqref{BC} to find
$$\left|{\rho_0(t,x)-\rho_{0,\infty}\rndP{\frac{x}{t}}}\right|\lesssim \max_{\alpha=1,\hdots,N}\infnorm{F^{\alpha,0}(t)-F_\infty^{\alpha,0}} \lesssim t^{-2}$$
so that, due to the vanishing of the limit $\rho_{0,\infty}$, we have
$$\Vert \rho_0(t) \Vert_\infty \lesssim t^{-2}.$$
%
%Additionally, we use Lemma \ref{LE} with this estimate so that 
%$$ \Vert E_0(t)\Vert_\infty \lesssim \Vert \nabla_x{\rndP{\Delta_x}}^{-1}\rho_0(t) \Vert_\infty \lesssim \infnorm{\rho_0(t)} \lesssim t^{-2}.$$ 
Finally, using the proposed limits for $\rho_1(t,x)$ and $E_1(t,x)$, the estimates on $\Vert \rho_0(t) \Vert_\infty$ and $\| F^{\alpha,1}(t) - F^{\alpha,1}_\infty \|_\infty$, and \eqref{Gv2} within Lemma \ref{DEgeneral} with $k =0$ and $\ell = 1$ and Lemma \ref{LDensitygeneral} with $\ell = 1$ gives
\begin{equation}
\label{rho1}
\sup_{x \in \mathbb{R}^3} \left | t^4\rho(t,x) - \rho_{1, \infty} \left( \frac{x}{t} \right) \right | \lesssim t \Vert \rho_0(t) \Vert_\infty +  \max_{\alpha = 1,...,N} \left \| F^{\alpha,1}(t) - F^{\alpha,1}_\infty \right \|_\infty  + t^{-1} \mcG_v^{2}(t) \lesssim t^{-1}
\end{equation}
and
\begin{equation}
\label{E1}
\sup_{x \in \mathbb{R}^3} \left | t^3 E(t,x) - E_{1,\infty}\left( \frac{x}{t} \right) \right | \lesssim  t \Vert \rho_0(t) \Vert_\infty + \max_{\alpha = 1,...,N} \left \| F^{\alpha,1}(t) - F^{\alpha,1}_\infty \right \|_\infty  + t^{-1} \mcG_v^{2}(t) \lesssim t^{-1}.
\end{equation}

Assembling the estimates \labelcref{Gv2,Dx2E,Dx1E,rho1,E1} 
% \eqref{Gv2}, \eqref{Dx2E}, \eqref{Dx1E}, \eqref{rho1}, and \eqref{E1}
then implies $Q_0$ and completes the base case.
%$$\begin{gathered}
%\| t^4\rho(t) - \rho_{1, \infty} \|_\infty \lesssim t^{-1},\\
%\| t^3 E(t) - E_{1,\infty} \|_\infty \lesssim t^{-1},\\
%\| t^3\nabla_xE(t) - \nabla_v E_{0,\infty} \|_\infty \lesssim t^{-1},\\ 
%\| t^4D^2_xE(t) - D_v^2E_{0,\infty} \|_\infty \lissome t^{-1}\ln(t)\\
%\mcG^1_{x,v}(t) + \mcG^1_{v}(t) \lesssim 1\\
%\mcG^2_{x,v}(t) + \mcG^2_{v}(t) \lesssim 1,
%\end{gathered}$$
%which have all been established.

%%%%%%%%%%%%%%%%%%%%%%%%%%%%%%%%%%

\subsection{Inductive step}
To address the inductive step we fix $n \geq 1$ and assume $P_{n-1} \Rightarrow Q_{n-1}$ and $P_n$, so that we must then establish $Q_n$.
Note that, as $P_n$ implies $P_{n-1}$, we can immediately deduce $Q_{n-1}$, as well.
Thus, by the induction hypothesis
$$
\begin{gathered}
\sup_{x \in \mathbb{R}^3} \left | t^{n+3}\rho(t) - \rho_{n, \infty} \left( \frac{x}{t} \right) \right | \lesssim t^{-1}, \\
\sup_{x \in \mathbb{R}^3} \left |  t^{n+2} E(t) - E_{n,\infty} \left( \frac{x}{t} \right) \right | \lesssim t^{-1},
\end{gathered}
$$
the lower order field estimates
$$\sup_{x \in \mathbb{R}^3} \left |  t^{n+2}\nabla^k_xE(t) - \nabla_v^k E_{n-k,\infty} \left( \frac{x}{t} \right) \right | \lesssim t^{-1}, \qquad \forall k \in \{1, ..., n\},$$
and
the higher order field estimate
$$\sup_{x \in \mathbb{R}^3} \left |  t^{n+3}\nabla^{n+1}_xE(t) - \nabla_v^{n+1} E_{0,\infty} \left( \frac{x}{t} \right) \right | \lesssim t^{-1}\ln(t)$$
all hold.
Because $\rho_{n,\infty} \equiv 0$ and $E_{k,\infty} \equiv 0$ for all $k = 0, ..., n$, the above quantities enjoy faster rates of decay, and thus we have the following set of estimates stemming directly from the induction hypothesis
\begin{equation}
	\label{IH}
	\left \{ \,
	\begin{aligned}
		\| \rho(t) \|_\infty & \lesssim t^{-n-4}, & \\
		\| E(t)  \|_\infty & \lesssim t^{-n-3}, & \\
		\| \nabla^k_xE(t) \|_\infty & \lesssim t^{-n-3} \qquad & \forall k \in \{1, ..., n\}, & \\
		\| \nabla^{n+1}_xE(t) \|_\infty & \lesssim t^{-n-4}\ln(t),\\
		\mcG^k_{x,v}(t) + \mcG^k_{v}(t) & \lesssim 1 \qquad & \forall k \in \{1, ..., n\},\\
		\mcG^{n+1}_{x,v}(t) + \mcG^{n+1}_{v}(t) & \lesssim 1. & 
	\end{aligned}
	\right .
\end{equation}
This represents the starting point for the next iteration and is analogous to the decay rates \eqref{BC} inherited in the base case.
%Many of these terms possess the expected upper bound behavior, but we must show that they converge to limits $\rho_{n+1,\infty}$ and $E_{n+1,\infty}$ with the rate listed on the right side.
For clarity, we  separate the  inductive step into several smaller steps.\\

\emph{\underline{Step 1:} Preliminary estimates of highest order derivatives}

We begin by stating the following lemma, which generalizes Lemma \ref{Dng0} for the induction step.

\begin{lemma}
\label{Dng}
Let $n \in \N$ be given and assume that $\rho_{k,\infty} \equiv 0$ for all $k = 0, ..., n$, which implies \eqref{IH}.
Then, we have
\begin{equation}
\label{DngDnE}
\mcG_v^{n+2}(t) \lesssim 1 + \int_1^t s^{n+3} \left\|\nabla_x^{n+2}E(s)\right\|_{\infty} \ ds,
\end{equation}
and, as a preliminary estimate,
$$\left\Vert \nabla_x^{n+2} E(t) \right\Vert_\infty \lesssim t^{-4-n} \ln(t), \qquad \mcG_{x,v}^{n+2}(t) \lesssim 1, \qquad \mathrm{and} \qquad  \mcG_v^{n+2}(t) \lesssim \ln^2(t).$$
\end{lemma}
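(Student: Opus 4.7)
The plan is to generalize the proof of Lemma~\ref{Dng0} by carefully tracking highest-order $v$-derivatives of $g^\alpha$ along characteristics of \eqref{VPg}. The induction hypothesis \eqref{IH} will render all intermediate terms integrable in time, and the extremal contribution---involving $\nabla_x^{n+2}E$---will be handled by a logarithmic Gronwall bootstrap based on Lemma~\ref{LDE}. Three steps are required: (i) derive \eqref{DngDnE}; (ii) bootstrap to obtain $\mcG_v^{n+2}(t)\lesssim \ln^2(t)$ and $\Vert \nabla_x^{n+2}E(t)\Vert_\infty \lesssim t^{-n-4}\ln(t)$; and (iii) upgrade to $\mcG_{x,v}^{n+2}(t)\lesssim 1$ by exploiting the fact that $x$-derivatives on $E(t,x+vt)$ do not generate factors of $t$ via the chain rule.

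For step (i), fix $|\beta|=n+2$, apply $D_v^\beta$ to \eqref{VPg}, and use the key chain-rule identity $D_v^\gamma[E(t,x+vt)] = t^{|\gamma|}(D_x^\gamma E)(t,x+vt)$. Expanding by Leibniz and passing to characteristics produces, for $D_v^\beta g^\alpha$ along flow lines, a sum over $0\neq\gamma\preceq\beta$ of schematic source terms of the form $t^{|\gamma|+1}(D_x^\gamma E) \cdot D_v^{\beta-\gamma}\nabla_x g^\alpha$ and $t^{|\gamma|}(D_x^\gamma E) \cdot D_v^{\beta-\gamma}\nabla_v g^\alpha$. The extremal contribution $\gamma=\beta$ generates the forcing $s^{n+3}\Vert \nabla_x^{n+2} E(s)\Vert_\infty$ appearing in \eqref{DngDnE}, while for every $1\leq|\gamma|\leq n+1$ the induction hypothesis provides $\Vert D_x^\gamma E(s)\Vert_\infty \lesssim s^{-n-3}$ (with an additional $\ln s$ factor when $|\gamma|=n+1$), so that $s^{|\gamma|+1}\Vert D_x^\gamma E(s)\Vert_\infty$ is always integrable. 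Derivatives of $g^\alpha$ of order at most $n+1$ are uniformly controlled by \eqref{IH}; the unique terms involving an order-$(n+2)$ derivative of $g^\alpha$ (arising when $|\gamma|=1$) yield contributions of the form $\int_0^t K(s)\mcG_v^{n+2}(s)\,ds$ with $K\in L^1$, which are absorbed by a standard Gronwall argument to produce \eqref{DngDnE}.

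For step (ii), Lemma~\ref{LDE} with $k=n+2$ applies since $\mcG_v^{n+1}\lesssim 1$ by \eqref{IH}, yielding
$$\Vert \nabla_x^{n+2} E(t)\Vert_\infty \lesssim t^{-n-4}\bigl(1 + \ln^*\!\bigl(\mcG_v^{n+2}(t)\bigr)\bigr).$$
Substituting into \eqref{DngDnE} produces the logarithmic Gronwall inequality
$$\mcG_v^{n+2}(t) \lesssim 1 + \int_1^t s^{-1}\bigl(1 + \ln^*\!\bigl(\mcG_v^{n+2}(s)\bigr)\bigr)\, ds,$$
from which the ansatz $\mcG_v^{n+2}(t)\lesssim \ln^2(t)$ is directly verified to be self-consistent (the integral then grows at most like $\ln(t)\ln\ln(t)$). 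Reinserting this bound gives $\Vert \nabla_x^{n+2}E(t)\Vert_\infty \lesssim t^{-n-4}\ln(t)$. For step (iii), applying $D_v^\beta D_x^\eta$ with $|\eta|\geq 1$ and $|\beta|+|\eta|=n+2$ to \eqref{VPg}, one observes that $D_x^\eta[E(t,x+vt)] = (D_x^\eta E)(t,x+vt)$ carries no factor of $t$; the extremal source then involves $s^{|\beta|+1}\Vert \nabla_x^{n+2}E(s)\Vert_\infty$ with $|\beta|+1\leq n+2$, which is integrable by the preliminary bound just obtained. All other terms are handled exactly as in step (i) using \eqref{IH}, giving $\mcG_{x,v}^{n+2}(t)\lesssim 1$.

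The main obstacle is the combinatorial book-keeping in step (i): each of the many terms in the Leibniz expansion requires matching the time weight $s^{|\gamma|+1}$ against the decay rate of $\Vert \nabla_x^{|\gamma|} E(s)\Vert_\infty$ supplied by \eqref{IH} and verifying integrability, while carefully isolating the unique top-order contribution responsible for the $\ln^2(t)$ loss, which alone requires the bootstrap through Lemma~\ref{LDE}.
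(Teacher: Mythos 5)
Your proposal has a genuine gap in step (i), and it creates a circularity that prevents the proof from closing in the order you lay out.

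When you apply a pure $v$-derivative $D_v^\beta$ with $|\beta|=n+2$ to \eqref{VPg} and take $|\gamma|=1$ in the Leibniz expansion, the transport operator $(-t\nabla_x+\nabla_v)$ acting on $D_v^{\beta-\gamma}g^\alpha$ produces \emph{two} order-$(n+2)$ contributions: $\nabla_v D_v^{\beta-\gamma}g^\alpha=D_v^{n+2}g^\alpha$, which is controlled by $\mcG_v^{n+2}$, and $-t\,\nabla_x D_v^{\beta-\gamma}g^\alpha$, which has one $x$-derivative and $n+1$ $v$-derivatives and is therefore controlled only by $\mcG_{x,v}^{n+2}$ (see \eqref{eq:G^k_x,v}). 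Using $\|\nabla_xE\|_\infty\lesssim s^{-n-3}$ from \eqref{IH}, the resulting source term is of the schematic form
\[
\int_1^t\Bigl(s^{-n-2}\,\mcG_v^{n+2}(s)\;+\;s^{-n-1}\,\mcG_{x,v}^{n+2}(s)\Bigr)\,ds,
\]
not merely $\int K(s)\mcG_v^{n+2}(s)\,ds$ as you claim. Since $\mcG_{x,v}^{n+2}$ is an order-$(n+2)$ quantity not reached by the induction hypothesis, the Gronwall argument in step (i) cannot close as written: the inequality you would need is $\mcG_v^{n+2}(t)\lesssim 1+\int_1^t s^{n+3}\|\nabla_x^{n+2}E(s)\|_\infty\,ds + \int_1^t s^{-n-1}\mcG_{x,v}^{n+2}(s)\,ds$, with the last term uncontrolled. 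Your step (iii), which is supposed to establish $\mcG_{x,v}^{n+2}\lesssim 1$, uses the preliminary bound on $\|\nabla_x^{n+2}E\|_\infty$ from step (ii), which in turn depends on \eqref{DngDnE} from step (i) --- so the ordering (i) $\to$ (ii) $\to$ (iii) is circular.

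The paper sidesteps this coupling by introducing the weighted quantity $\mcD_{n+2}(t)=1+\sum_{j=0}^{n+2}t^{-j}\mcD_{n+2}^j(t)$, which bundles \emph{all} order-$(n+2)$ derivatives with $t$-weights tuned so that a single Gronwall inequality yields $\mcD_{n+2}(t)\lesssim 1$; this immediately gives the polynomial a priori bound $\mcG_v^{n+2}(t)\lesssim t^{n+2}$ needed to make Lemma~\ref{LDE} effective, and a subsequent iteration improves each $\mcD_{n+2}^j$ (in particular $\mcD_{n+2}^{n+1}\lesssim\ln t$) \emph{before} \eqref{DngDnE} is derived. Your strategy could likely be repaired by running a joint Gronwall on $\mcG_v^{n+2}+\mcG_{x,v}^{n+2}$ (or adopting the paper's $\mcD_{n+2}$) prior to the logarithmic bootstrap via Lemma~\ref{LDE}, but the separation of $\mcG_v^{n+2}$ and $\mcG_{x,v}^{n+2}$ into sequential steps, with the former first, does not work.
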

\begin{proof}
The proof is contained in Section \ref{sec:bounds-derivs-g} below.
\end{proof} 
Though the proof of this lemma is postponed until the final section, we must note that Lemma \ref{Dng} relies upon Lemma \ref{LDE} in the same way that Lemma \ref{Dng0} invoked this result, namely to obtain an estimate of the highest order field derivatives in terms of the highest order derivatives of the translated particle distribution.

%Due to the bound on $\mcG^{n+1}_{v}(t)$ stemming from the induction hypothesis, we first invoke Lemma \ref{LDE} with $k=n+2$, which yields
%$$\| \nabla_x^{n+2}E(t) \|_\infty \lesssim t^{-4-n} \left [ 1 + \ln^* \left(\mcG_v^{n+2}(t) \right) \right ], $$
%and combining this estimate with the result of Lemma \ref{Dng}, we find
%$$
%\begin{gathered}
%\| \nabla_x^{n+2}E(t) \|_\infty \lesssim t^{-4-n} \ln(t),\\
%\mcG_{x,v}^{n+2}(t) \lesssim 1, \\
% \mcG_v^{n+2}(t) \lesssim \ln^2(t).
%\end{gathered}
%$$

\vspace{0.1in}

\emph{\underline{Step 2:} Derivative estimates of $F^{\alpha,0}(t,v)$}

Next, we use Lemma \ref{dvkrho} for all $k=1, ..., n+1$ to find
\begin{equation}
\label{Dkrho}
\left\| \nabla^k_x \rho(t)\right\|_\infty \lesssim t^{-3-k}\mcG_v^k(t) \lesssim t^{-3-k}.
\end{equation}
We estimate the highest-order derivatives of $F^{\alpha,0}$ by invoking Lemma \ref{dvkFalpha0} with $k = n+1$ and using the preliminary rates from Step $1$.
In particular, separating the sum involving field derivatives into three terms, corresponding to $j=0$, the $j=1, .., n$ terms, and $j=n+1$, and using \eqref{IH}, \eqref{Dkrho}, and Lemma \ref{Dng}, we find
\begin{align*}
\infnorm{\nabla_v^{n+1} F^{\alpha,0}(t) - \nabla_v^{n+1}  F^{\alpha,0}_\infty }
& \lesssim \int_t^\infty  \sum_{j=0}^{n+1} s^{n+1-j}\biggl [s \infnorm{\nabla_x^{n+1-j} \rho(s)}\mcG_v^j(s) + \infnorm{\nabla_x^{n+1-j}E(s)}\mcG_v^{j+1}(s) \biggr ]ds\\
&\lesssim \int_t^\infty  \biggl (\sum_{j=0}^{n+1} s^{n+2-j} \infnorm{\nabla_x^{n+1-j} \rho(s)}\mcG_v^j(s) + s^{n+1}\infnorm{\nabla_x^{n+1}E(s)}\\
& \quad +  \sum_{j=1}^{n} s^{n+1-j} \infnorm{\nabla_x^{n+1-j}E(s)}\mcG_v^{j+1}(s) + \infnorm{E(s)}\mcG_v^{n+2}(s) \biggr ) ds\\
&\lesssim \int_t^\infty \biggl ( \sum_{j=0}^{n+1} s^{n+2-j} s^{-4-n+j} + s^{n+1} s^{-n-4} \ln(s)\\
& \quad  + \sum_{j=1}^{n} s^{n+1-j} s^{-n-3} + s^{-n-3} \ln^2(s) \biggr )ds\\
&\lesssim  \int_t^\infty s^{-2} \ ds \lesssim t^{-1}.
\end{align*}
Hence, the function
$$F^{\alpha,0}(t,v) = \int g^\alpha(t,y,v) \ dy$$
satisfies
\begin{equation}
\label{Dvnp1F0}
\left\| \nabla_v^{n+1} F^{\alpha,0}(t) - \nabla_v^{n+1}F^{\alpha,0}_\infty \right\|_\infty \lesssim t^{-1}.
\end{equation}
for any $\alpha = 1, ..., N$,
which establishes the existence and regularity of the limiting function $\nabla^{n+1}_v F^{\alpha,0}_\infty(v)$, as mentioned within Remark \ref{Finf}.

\vspace{0.1in}

\emph{\underline{Step 3:} Refined estimates of highest order derivatives}

With the convergence result for $\nabla_v^{n+1} F^{\alpha,0}(t,v)$ in place, we use Lemma \ref{LDEgeneral} with $k=n+2$ and $\ell = 0$ along with the preliminary estimate of $\mcG_v^{n+2}(t)$ in Lemma \ref{Dng} to conclude
\thinmuskip=1mu \medmuskip=2mu \thickmuskip=3mu
$$\sup_{x \in \mathbb{R}^3} \left | t^{n+4} \nabla_x^{n+2}E(t,x) - \nabla_v^{n+2} E_{0,\infty} \left( \frac{x}{t} \right) \right | \lesssim \ln(t) \biggl ( \left\Vert \nabla_v^{n+1} F^{\alpha, 0}(t) -  \nabla_v^{n+1}F^{\alpha, 0}_\infty(t) \right\Vert_\infty +  t^{-1} \mcG_v^{n+2}(t) \biggr ) \lesssim t^{-1}\ln^3(t),$$
\thinmuskip=3mu \medmuskip=4mu \thickmuskip=5mu
which, because the limiting function satisfies $E_{0,\infty} \equiv 0$, further implies a refined estimate of field derivatives of order $n+2$, namely
$$\| \nabla_x^{n+2}E(t) \|_\infty \lesssim t^{-n-5}\ln^3(t).$$
By revisiting \eqref{DngDnE}, this faster decay then implies the uniform bound $\mcG_v^{n+2}(t) \lesssim 1$, and thus
\begin{equation}
\label{Gxvnp2}
\mcG_{x,v}^{n+2}(t) + \mcG_v^{n+2}(t) \lesssim 1
\end{equation}
from Lemma \ref{Dng}. The bound on highest-order field derivatives is then immediately refined by \eqref{Gxvnp2} to give
\begin{equation}
\label{tnp4Dxnp2E}
\sup_{x \in \mathbb{R}^3} \left | t^{n+4} \nabla_x^{n+2}E(t,x) - \nabla_v^{n+2} E_{0,\infty} \left( \frac{x}{t} \right) \right | \lesssim t^{-1}\ln(t).
\end{equation}
%which yields
%\begin{equation}
%\label{Dxnp2E}
%\| \nabla_x^{n+2}E(t) \|_\infty \lesssim t^{-n-5}\ln(t).
%\end{equation}

\vspace{0.1in}

\emph{\underline{Step 4:} Improved estimates of lower order field derivatives}

The goal of this step is to improve the decay rates for lower order field derivatives and prove that for any $k\in\{1,\hdots,n+1\}$, we have $\infnorm{\nabla^k_x E(t)} \lesssim t^{-n-4}.$
We first estimate field derivatives of order $n+1$ and remove the logarithm in the estimate appearing within the induction hypothesis \eqref{IH}. In particular, we use Lemma \ref{DEgeneral} with $k =n+1$ and $\ell = 0$, which implies 
$$\sup_{x \in \mathbb{R}^3} \left | t^{n+3} \nabla_x^{n+1}E(t,x)  - \nabla_v^{n+1} E_{0,\infty} \left( \frac{x}{t}\right)  \right | \lesssim \max_{\alpha = 1, ..., N} \Vert \nabla_v^{n+1}F^{\alpha,0}(t) - \nabla_v^{n+1}F^{\alpha, 0}_\infty\Vert_\infty +  t^{-1} \mcG_v^{n+2}(t) $$
so that with \eqref{Dvnp1F0} and \eqref{Gxvnp2}, we arrive at
\begin{equation}
\label{tnp3Dxnp1E}
\sup_{x \in \mathbb{R}^3} \left | t^{n+3} \nabla_x^{n+1}E(t,x) - \nabla_v^{n+1} E_{0,\infty} \left( \frac{x}{t} \right) \right | \lesssim t^{-1}.
\end{equation}
and thus
\begin{equation}
\label{Dxnp1E}
\| \nabla_x^{n+1}E(t)\|_\infty \lesssim t^{-n-4}.
\end{equation}

Next, we will improve the estimate of $k$th order field derivatives for $k=1, ..., n$.
To do so, we will utilize Lemma \ref{DEgeneral}, but we must first bound the terms appearing on the right side of the associated inequality.
To begin, we invoke Lemma \ref{dvkFell} for any $k=1,\hdots,n$ and $\ell=0,\hdots,n-k$ and use \eqref{IH} to find
\begin{align*}
\infnorm{\nabla_v^k F^{\alpha,\ell}(t)-\nabla_v^k F^{\alpha,\ell}_\infty} 
	& \lesssim \int_t^\infty \sum_{j=0}^{k+\ell} s^{k+\ell-j}\infnorm{D_x^{k+\ell-j}E(s)} \Big( s \mcG_{x,v}^{j+1}(s) + \mcG_v^{j+1}(s) \Big) ds\\
    &\lesssim \int_t^\infty  \sum_{j=0}^{k+\ell} s^{k+\ell-j} \cdot s^{-n-3} \cdot s \ ds\\
	&\lesssim \int_t^\infty  \sum_{j=0}^{k+\ell} s^{k+\ell-n-j-2} ds \lesssim t^{k+\ell-n-1}
\end{align*}
as $k + \ell \leq n$. 
As before, this provides the existence and regularity of the limiting functions $\nabla^{k}_v F^{\alpha,\ell}_\infty(v)$
for any $\alpha = 1, ..., N$, $k=1,\hdots,n$, and $\ell=0,\hdots,n-k$, as mentioned within Remark \ref{Finf}.
With this, we invoke Lemma \ref{Drhogeneral} for any $k=1,\hdots,n$ and $\ell=0,\hdots,n-k$ to find
$$\sup_{x \in \bfR^3} \left | t^k \nabla^k_x\rho_\ell(t,x) - \nabla^k_v\rho_{\ell,\infty}\left(\frac{x}{t} \right) \right | \lesssim \max_{\alpha = 1, ..., N} \| \nabla^k_v F^{\alpha, \ell}(t)  - \nabla^k_v F^{\alpha, \ell}_\infty \|_\infty \lesssim t^{k+\ell-n-1}.$$
For any $\ell\in\{0,\hdots,n-1\},$ we have $\rho_{\ell,\infty}\equiv 0$, and hence this estimate gives
$$\infnorm{\nabla_x^k \rho_{\ell}(t) }\lesssim t^{\ell -n-1}$$
for any $k=1,\hdots,n$ and $\ell=0,\hdots,n-k$.

Similarly, using Lemma \ref{dvkFell} for any $k=1,\hdots,n$ and $\ell=n+1-k$ with \eqref{IH}, \eqref{Gxvnp2}, and \eqref{Dxnp1E} gives
\begin{align*}
	\infnorm{\nabla_v^k F^{\alpha,n+1-k}(t)-\nabla_v^k F^{\alpha,n+1-k}_\infty} &\lesssim \int_t^\infty \sum_{j=0}^{n+1} s^{n+1-j}\infnorm{\nabla_x^{n+1-j}E(s)} \Big( s \mcG_{x,v}^{j+1}(s) + \mcG_v^{j+1}(s) \Big)\ ds\\
	&\lesssim \int_t^\infty \Bigg[ s^{n+1}\infnorm{\nabla_x^{n+1}E(s)} ( s +1) + \infnorm{E(s)} \Big( s \mcG_{x,v}^{n+2}(s) + \mcG_v^{n+2}(s) \Big) \\
	&\qquad\qquad+\sum_{j=1}^{n} s^{n+1-j}\infnorm{\nabla_x^{n+1-j}E(s)} \Big( s \mcG_{x,v}^{j+1}(s) + \mcG_v^{j+1}(s) \Big) \Bigg]\ ds\\
	&\lesssim \int_t^\infty \sqrP{s^{n+2}s^{-n-4} + s^{-n-2}+\sum_{j=1}^{n}s^{n+2-j}s^{-n-3}} ds \lesssim t^{-1}.
\end{align*}
Combining these estimates and invoking Lemma \ref{DEgeneral} for $k = 1, ..., n$, and $\ell=n+1-k$ with \eqref{Gxvnp2}, we have 
\begin{align*}
	\sup_{x \in \bfR^3} \left | t^{n+3} \nabla^k_xE(t,x) - \nabla^k_vE_{n+1-k,\infty}\left(\frac{x}{t} \right) \right | 
	& \lesssim \sum_{m=0}^{n-k} t^{n+1-k -m} \| \nabla_x^k \rho_m(t) \|_\infty\\
	& \quad \quad + \max_{\alpha = 1, ..., N} \left\| \nabla^k_v F^{\alpha,n+1-k}(t)  - \nabla^k_v F^{\alpha, n+1-k}_\infty \right\|_\infty + t^{-1} \mcG_v^{n+2}(t)\\
	&\lesssim  \sum_{m=0}^{n-k} t^{n+1-k -m} \cdot t^{m-n-1}+t^{-1} \lesssim t^{-k} + t^{-1} \lesssim t^{-1}
\end{align*}
as $k \geq 1$. % we find
% $$\sup_{x \in \bfR^3} \left | t^{n+3} \nabla^k_xE(t,x) - \nabla^k_vE_{n+1-k,\infty}\left(\frac{x}{t} \right) \right |  \lesssim t^{-1}$$
% for all $k = 1, ..., n$.
Assembling this  with \eqref{tnp3Dxnp1E} results in
\begin{equation}
\label{tnp3DkxE}
\sup_{x \in \bfR^3} \left | t^{n+3} \nabla^k_xE(t,x) - \nabla^k_vE_{n+1-k,\infty}\left(\frac{x}{t} \right) \right |  \lesssim t^{-1} 
\end{equation}
for all $k = 1, ..., n+1$.
Therefore, as $n+1-k \leq n$ for $k \geq 1$, we deduce that $E_{n+1-k,\infty}\equiv 0$ for all $k=1,\hdots,n+1$, which further implies for all $k=1,\hdots,n+1$
\begin{equation}
\label{DkxE}
\infnorm{\nabla^k_x E(t)} \lesssim t^{-n-4}.
\end{equation}

\vspace{0.1in}

\emph{\underline{Step 5:} Establish next order limiting distribution}

With the improved field derivative rates, we now establish the convergence of the next order distribution to its limit.
Invoking Lemma \ref{dvkFell} with $k = 0$ and $\ell = n+1$ and using \eqref{IH}, \eqref{Gxvnp2}, and \eqref{DkxE}, we find
\begin{align*}
\infnorm{F^{\alpha,n+1}(t) - F^{\alpha,n+1}_\infty} 
& \lesssim \int_t^\infty  \sum_{j=0}^{n+1} s^{n+1-j}\infnorm{\nabla_x^{n+1-j}E(s)} \Big( s \mcG_{x,v}^{j+1}(s) + \mcG_v^{j+1}(s) \Big) \ ds \\
& = \int_t^\infty \biggl [\sum_{j=0}^{n} s^{n+1-j}\infnorm{\nabla_x^{n+1-j}E(s)} \Big( s \mcG_{x,v}^{j+1}(s) + \mcG_v^{j+1}(s) \Big)\\
& \qquad  + \Vert E(s) \Vert_\infty \Big( s \mcG_{x,v}^{n+2}(s) + \mcG_v^{n+2}(s) \Big) \biggr ] \ ds \\
& \lesssim \int_t^\infty \left(\sum_{j=0}^{n} s^{n+2-j} \cdot s^{-n-4}  + s^{-n-2} \right) \ ds \\
& \lesssim \int_t^\infty \sum_{j=0}^{n} s^{-2-j} \ ds  \lesssim \int_t^\infty s^{-2} \ ds \  \lesssim t^{-1}.
\end{align*}
Hence, the function $F^{\alpha,n+1}(t,v)$ satisfies
\begin{equation} 
\label{eq:Fn+1}
	\| F^{\alpha,n+1}(t) - F^{\alpha,n+1}_\infty \|_\infty \lesssim t^{-1}
\end{equation}
for any $\alpha = 1, ..., N$,
which establishes the existence of the limiting function $F^{\alpha,n+1}_\infty(v)$, as mentioned within Remark \ref{Finf}.
Using this, we define the next order limiting charge density and electric field via
$$\rho_{n+1,\infty}(v) = \sum_{\alpha = 1}^N q_\alpha F_\infty^{\alpha, n+1}(v)
\qquad \mathrm{and} \qquad
E_{n+1,\infty}(v) =\nabla_v (\Delta_v)^{-1} \rho_{n+1,\infty}(v).$$

\vspace{0.1in}

\emph{\underline{Step 6:} Convergence of density and field to next order distribution}

To establish the convergence of the next order density, we first use Lemma \ref{LDensitygeneral} with $\ell=n+1$ to find
	\[\sup_{x\in\R^3}\left|{t^{n+4}\rho(t,x)-\rho_{n+1,\infty}\rndP{\frac{x}{t}}}\right| \lesssim \sum_{j=0}^{n}t^{n+1-j}\infnorm{\rho_j(t)}+\max_{\alpha=1,\hdots,N}\infnorm{F^{\alpha,n+1}(t)-F_\infty^{\alpha,n+1}} + t^{-1}\mcG_v^{n+2}(t).\]
	The second term is easily estimated via \eqref{eq:Fn+1} and the third term via \eqref{Gxvnp2} so that each is $\mathcal{O} \left(t^{-1} \right)$. Hence, we focus on estimating the first term on the right side of the inequality.
	
	First, using Lemma \ref{dvkFell} with $k=0$ and $\ell=1,\hdots,n$, as well as \eqref{IH} and \eqref{DkxE}, we find
	\begin{align*}
		\infnorm{F^{\alpha,\ell}(t)-F_\infty^{\alpha,\ell}} 
		&\lesssim \int_t^\infty \sum_{j=0}^{\ell} s^{\ell-j}\infnorm{\nabla_x^{\ell-j}E(s)}\Big(s\mcG_{x,v}^{j+1}(s)+\mcG_v^{j+1}(s)\Big) \ ds \\
		&\lesssim \int_t^\infty \left(\sum_{j=0}^{\ell-1} s^{\ell-j+1}\infnorm{\nabla_x^{\ell-j}E(s)} + s \Vert E(s) \Vert_\infty \right) ds \\
		&\lesssim \int_t^\infty \left(\sum_{j=0}^{\ell-1} s^{\ell-j-n-3} + s^{-n-2} \right) ds \lesssim t^{\ell-n-2}+t^{-n-1}\lesssim t^{\ell-n-2}
	\end{align*}
	for $\ell=1,\hdots,n$.
	For $\ell=0$, we use Lemma \ref{dvkFalpha0} with $k=0$ and \eqref{IH} to arrive at
	\[\infnorm{F^{\alpha,0}(t)-F^{\alpha,0}_\infty} \lesssim \int_t^\infty \Big( s\infnorm{\rho(s)}+\infnorm{E(s)} \Big) \ ds \lesssim \int_t^\infty s^{-n-3}ds \lesssim t^{-n-2},\]
	which provides the same rate as above when $\ell = 0$. 
	Putting these together and using Lemma \ref{Drhogeneral} with $k=0$ and $\ell = 0,\hdots,n$ yields
	\begin{equation*} 
	\label{eq:rhoell}
	\sup_{x \in \bfR^3} \left|{\rho_\ell(t,x)-\rho_{\ell,\infty}\rndP{\frac{x}{t}}}\right|\lesssim \max_{\alpha=1,\hdots,N}\infnorm{F^{\alpha,\ell}(t)-F_\infty^{\alpha,\ell}} \lesssim t^{\ell-n-2}
	\end{equation*}
	for any $\ell = 0,\hdots,n$.
	As $\rho_{\ell,\infty}\equiv 0$ for $\ell = 0,\hdots,n$, we find
	\[ \infnorm{\rho_\ell(t)} \lesssim t^{\ell-n-2} .\]
	Inserting this within the sum then gives 	
	\begin{equation}
	\label{rhosum}
	\sum_{j=0}^{n}t^{n+1-j}\infnorm{\rho_j(t)} \lesssim \sum_{j=1}^{n}t^{n+1-j}t^{j-n-2} \lesssim t^{-1},
	\end{equation}
	and adding this to the previously established estimates for the other terms, we find
	\begin{equation}
	\label{rhonp1}
	\sup_{x\in\R^3}\left|{t^{n+4}\rho(t,x)-\rho_{n+1,\infty}\rndP{\frac{x}{t}}}\right| \lesssim t^{-1}.
	\end{equation}

Turning to the next order field approximation, we first apply Lemma \ref{DEgeneral} with $k=0$ and $\ell=n+1$ to find 
	\[\sup_{x\in\R^3}\left|{t^{n+3} E(t,x)-E_{n+1,\infty}\rndP{\frac{x}{t}}}\right|\lesssim \sum_{j=0}^{n}t^{n+1-j}\infnorm{\rho_j(t)}+ \max_{\alpha=1,\hdots,N}\Vert F^{\alpha,n+1}(t)-F_\infty^{\alpha,n+1} \Vert_\infty  + t^{-1}\mcG_v^{n+2}(t).\]
	%Finally,
	As before, the third term is $\mathcal{O} \left(t^{-1} \right)$ due to \eqref{Gxvnp2}, while the second term is $\mathcal{O} \left(t^{-1} \right)$  due to \eqref{eq:Fn+1}.
	To estimate the first term, we again use \eqref{rhosum} so that
%	Lemma \ref{LE} and \eqref{eq:rhoell} so that 
%	\begin{align*}
%	\left|{E_\ell(t,x)-E_{\ell,\infty}\rndP{\frac{x}{t}}}\right| &\leq \Vert \nabla_x{\rndP{\Delta_x}}^{-1}\rndP{\rho_\ell(t) -\rho_{\ell,\infty} } \Vert_\infty \\
%	&\lesssim \infnorm{\rho_\ell(t)-\rho_{\ell,\infty}}\\
%	&\lesssim t^{\ell-n-2}
%	\end{align*}
%	for any $\ell = 0,\hdots,n$.
%	Noting that $E_{\ell,\infty}\equiv 0$ for $\ell = 0,\hdots,n$ provides
%	\[ \infnorm{E_\ell(t)} \lesssim t^{\ell-n-2}.\]
%	%
%	Inserting this within the first term then gives 	
%	\begin{align*}
%	\sum_{j=0}^{n}t^{n+1-j}\infnorm{E_j(t)} \lesssim \sum_{j=1}^{n}t^{n+1-j}t^{j-n-2} \lesssim t^{-1},
%	\end{align*}
%	and adding this to the previously established estimates, we find
putting these estimates together yields
	\begin{equation}
	\label{Enp1}
	\sup_{x\in\R^3}\left|{t^{n+3}E(t,x)-E_{n+1,\infty}\rndP{\frac{x}{t}}}\right| \lesssim t^{-1}.
	\end{equation}

Finally, assembling the estimates \eqref{Gxvnp2}, \eqref{tnp4Dxnp2E}, \eqref{tnp3DkxE}, \eqref{rhonp1}, and \eqref{Enp1}, as well as the previous bounds on $\mcG^k_{x,v}(t)$ and $\mcG^k_{v}(t)$ for $k = 0, ..., n+1$ implied by the induction hypothesis \eqref{IH}, then completes the inductive step as $Q_n$ has been established.
Hence, the stated decay rates of the charge density, electric field, field derivatives, and derivatives of $g^\alpha$  hold for any $n \in \mathbb{N}$.

Lastly, to justify the rate of convergence of $g^\alpha(t,x,v)$ and its derivatives, we note that using the stated decay rate of the field and the uniform boundedness of derivatives of $g^\alpha$, the Vlasov equation \eqref{VPg} yields
$$\Vert \partial_t g^\alpha (t) \Vert_\infty \lesssim t\Vert E(t) \Vert_\infty \mcG_{x,v}^1(t) + \Vert E(t) \Vert_\infty \mcG_v^1(t) \lesssim t^{-n-2}.$$
Upon integrating in $t$, we find for any $\alpha = 1, ..., N$
$$\sup_{(x,v) \in \bfR^6} \left | g^\alpha \left(t,x, v \right) - f^\alpha_\infty(x,v) \right | \lesssim \int_t^\infty  \Vert \partial_t g^\alpha (s) \Vert_\infty \ ds  \lesssim t^{-n-1}.$$
Similarly, taking any $k$th-order $x$-derivative and $\ell$th-order $v$-derivative in the Vlasov equation with $k,\ell \in \mathbb{N}$ and $k + \ell \leq n+1$ and taking the supremum over $(x,v)\in \mathbb{R}^6$, we find
	\begin{align*}
	\Vert \partial_t D^k_x D^\ell_v g^\alpha(t) \Vert_\infty 
	 \lesssim& \sum_{i=0}^{k-1} \sum_{j=0}^\ell t^j \| \nabla_x^{i+j} E(t) \Vert_\infty \biggl ( t \mcG_{x,v}^{k+\ell-i-j+1}(t) + \mcG_{x,v}^{k+\ell-i-j+1}(t) \biggr )\\
	 &+ \sum_{j=0}^\ell t^j \| \nabla_x^{j+k} E(t) \Vert_\infty \biggl ( t \mcG_{x,v}^{\ell-j+1}(t) + \mcG_{v}^{\ell-j+1}(t) \biggr )
	\end{align*}
where the indices $i$ and $j$ represent the number of $x$ and $v$ derivatives that are applied to the field, respectively. 
As this holds for arbitrary derivatives, we decompose the double sum and use the previous estimates on the field \eqref{Enp1}, field derivatives \eqref{DkxE}, and derivatives of the distribution function via \eqref{IH} and \eqref{Gxvnp2} to find 
	\begin{align*}
	\Vert \partial_t \nabla^k_x \nabla^\ell_v g^\alpha(t) \Vert_\infty 
	 \lesssim&
	  (t+1) \Vert E(t) \Vert_\infty \mcG_v^{k+\ell+1}(t) + \sum_{j=1}^\ell t^j (t+1) \| \nabla_x^{j} E(t) \Vert_\infty \mcG_{x,v}^{k+\ell-j+1}(t)\\
	  & \ + \sum_{i=1}^{k-1} \sum_{j=0}^\ell t^{j}(t+1) \| \nabla_x^{i+j} E(t) \Vert_\infty \mcG_{x,v}^{k+\ell-i-j+1}(t)  + \sum_{j=0}^\ell t^j(t+1) \| \nabla_x^{j+k} E(t) \Vert_\infty\\
	   \lesssim& t^{-n-2} + \sum_{j=1}^\ell t^{j-n-3} + \sum_{i=1}^{k-1} \sum_{j=0}^\ell t^{j-n-3} + \sum_{j=0}^\ell t^{j-n-3}\\
	    \lesssim&  \max\{t^{-n-2}, t^{\ell-n-3}\}.
\end{align*}
Upon integrating in $t$, we find for any $\alpha = 1, ..., N$
$$\sup_{(x,v) \in \bfR^6} \left | \nabla_x^k \nabla_v^\ell g^\alpha \left(t,x, v \right) - \nabla_x^k \nabla_v^\ell  f^\alpha_\infty(x,v) \right | \lesssim \int_t^\infty  \Vert \partial_t \nabla_x^k \nabla_v^\ell g^\alpha (s) \Vert_\infty \ ds  \lesssim \max\{t^{-n-1}, t^{\ell-n-2}\}.$$
for $k + \ell \leq n+1$.
The estimates are analogous for the boundary cases $k=0$, $1 \leq \ell \leq n+1$ and $\ell=0$, $ 1 \leq k \leq n+1$.
In particular, the uniform convergence for all derivatives of order $n+1$ and the previously known compact support of the limiting function then implies $f_\infty^\alpha \in C_c^{n+1}(\bfR^6)$, which provides the regularity needed to justify \eqref{eq:F-al-inf}, as stated in Remark \ref{Finf}.
This completes the proof of  Theorem \ref{T2}.
\end{proof}

Next, we use the results of Theorems \ref{oldT1} and \ref{oldT2} and invoke Theorem \ref{T2} to prove Theorem \ref{T1}.

\begin{proof}[Proof of Theorem \ref{T1}]
Assume \eqref{Assumption} holds and $\mcM = 0$. 
Then, for $m = 0$, we merely take $\rho_{0,\infty} \not\equiv 0$ and apply Theorem \ref{oldT1}, which yields the sharp bounds
$$
\begin{gathered}
\| \rho(t)\|_\infty \sim t^{-3},\\
\| E(t)\|_\infty \sim t^{-2},\\
\| \nabla_x E(t)\|_\infty \sim t^{-3}
\end{gathered}
$$
and the modified scattering result
\begin{equation}
\label{modscattering}
\sup_{(x,v) \in \bfR^6} \left | f^\alpha \left(t,x +vt - \frac{q_\alpha}{m_\alpha} \ln(t)E_{0,\infty}(v), v \right) - f^\alpha_\infty(x,v) \right |  \lesssim t^{-1}\ln^{4}(t)
\end{equation}
for every $\alpha = 1, ..., N$ and any associated solution of \eqref{VP}.

Otherwise, we let $m \geq 1$ be given, define $n = m - 1$, and take $\rho_{\ell,\infty} \equiv 0$ for all $\ell = 0, ..., n$ with $\rho_{n+1,\infty} \not\equiv 0$
Then, applying Theorem \ref{T2} with $n = m-1$ gives
$$
\begin{gathered}
\| \rho(t)\|_\infty \sim t^{-m-3},\\
\| E(t)\|_\infty \sim t^{-m-2},\\
\| \nabla_x^k E(t)\|_\infty \sim t^{-m-3} \\
\end{gathered}
$$
for every $k =1, ..., m$.
Additionally, the distribution function scatters linearly so that
\begin{equation}
\label{scattering}
\sup_{(x,v) \in \bfR^6} \left | f^\alpha(t,x +vt, v) - f^\alpha_\infty(x,v) \right |  \lesssim t^{-m}
\end{equation}
for every $\alpha = 1, ..., N$ and any associated solution of \eqref{VP}.

It remains to justify the existence of solutions $f^\alpha \in C^{m+1} \left((0,\infty) \times \mathbb{R}^6 \right)$ for $\alpha = 1, ..., N$ 
that satisfy $\rho_{0,\infty} \not\equiv 0$ or, alternatively, the conditions
$\rho_{\ell,\infty} \equiv 0$ for all $\ell = 0, ..., m-1$
and
$\rho_{m,\infty} \not\equiv 0$.
To do this, we merely construct smooth functions with compact support whose moments, up to a desired order, must vanish.
Thus, we build well-behaved limits and utilize the scattering map constructed within \cite{Flynn} (see also \cite{Schlue-Taylor}) to guarantee the existence of solutions that tend to these limits as $t \to \infty$.
We perform this separately for $m = 0$ and $m \in \mathbb{N}$.

First, for $m=0$, we let a nonzero function $\eta \in C_c^{1}(\mathbb{R}^3)$ be given with
$$\int \eta(v) \ dv  = 0.$$
Then, we choose
$$f^\alpha_{0,\infty}(x,v) = \phi_0(x) \psi_0^\alpha(v)$$
where $\phi_0 \in C_c^{1}(\mathbb{R}^3)$ is nonnegative (but nontrivial), and for every $\alpha = 1, ..., N$
the functions $\psi_0^\alpha \in C_c^{1}(\mathbb{R}^3)$ are nonnegative and
satisfy the constraint
$$\sum_{\alpha = 1}^N q_\alpha \psi_0^\alpha(v) = \eta(v).$$
With this, we have
	\begin{align*}
	\rho_{0,\infty}(v)
	=
	\int \sum_{\alpha = 1}^N q_\alpha f^\alpha_{0,\infty}(x,v) \ dx
	=
	\left( \int \phi_0(x) \ dx \right)\sum_{\alpha = 1}^N q_\alpha \psi^\alpha_{0}(v)
	=
	 \left( \int  \phi_0(x) \ dx \right) \eta(v) 
	 \not\equiv 0
	\end{align*}
and
\begin{align*}
	\int \rho_{0,\infty}(v) \ dv
	= \left( \int  \phi_0(x) \ dx \right) \left( \int \eta(v) \ dv \right)
	 = 0,
\end{align*}
thereby satisfying the neutrality condition $\mcM = 0$ imposed by \eqref{Pinfmass}. 

Next, for a given $m \in \N$, we take $p > m+2$ and define $\Phi \in C_c^{m+1}(\mathbb{R})$ to be the corresponding weighted Gegenbauer polynomial of degree $m$ (see \cite{AbramSteg}), namely
$$\Phi_m(x) = \left( 1 - x^2 \right)^{p - \frac{1}{2}} {C}^p_{m}(x) \mathbbm{1}_{[-1,1]}$$
where $C^p_k(x)$ is a $k$th order Gegenbauer polynomial satisfying the orthogonality relationship
$$\int_{-1}^1 \left( 1 - x^2 \right)^{p - \frac{1}{2}} C^p_{k}(x)  C^p_{\ell}(x) \ dx = 0$$
for all $k,\ell \in \N_0$ with $k \neq \ell$
and
$$\int_{-1}^1 \left( 1 - x^2 \right)^{p - \frac{1}{2}} \left [ C^p_{k}(x)  \right]^2 \ dx > 0$$
for any $k \in \N_0$.
Because each $C^p_{k}(x)$ is a polynomial of degree $k$, we may normalize    $\Phi_m(x)$ so that
$$ \int x^{m} \Phi_m(x) \ dx = 1,$$
and orthogonality implies
$$ \int x^k \Phi_m(x) \ dx = 0, \qquad \mathrm{for \ all \ } k = 0, ...,m-1.$$
We note that the support of $\Phi_m$ can be rescaled to be a compact set of arbitrary size, if necessary, while maintaining the moment and regularity properties.
%Such functions are known to exist and are often constructed via arbitrarily smooth orthogonal polynomials with compact support (e.g. weighted Gegenbauer polynomials, featured within Section 10 of \cite{Tadmor}).
Then, letting $\Psi \in C_c^{m+1}(\bfR)$ be any function satisfying
$$\int \Psi(x) \ dx = 1,$$
we define $\mu_m \in C_c^{m+1}(\mathbb{R}^3)$ by
$$\mu_m (x) =  \Phi_m(x_1) \Psi(x_2) \Psi(x_3). $$
%
%If $m=0$, we find from these properties
%$$\int \mu_0(x) \ dx  = 1.$$
%In contrast, i
Then, for all $\beta \in \mathbb{N}_0^3$ with $|\beta| \leq m$, this yields 
\begin{align*}	
\int x^{\beta} \mu_m(x) \ dx & =  \left( \int x_1^{\beta_1} \Phi_m(x_1) \ dx_1 \right) \left( \int x_2^{\beta_2} \Psi(x_2) \ dx_2 \right) \left( \int x_3^{\beta_3} \Psi(x_3) \ dx_3 \right)\\
& = 
\begin{cases}
1 & \mathrm{if} \ \beta = (m,0,0) \\
0 & \mathrm{else,}
\end{cases}
\end{align*}
as $\beta_1 \in \{0, ..., m-1\}$ implies that the first integral vanishes due to the orthogonality condition stated above.

With the $\mu_m(x)$ functions in place for every $m \in \mathbb{N}$, we can now define the remaining scattering limits.
For a given $m \in \N$ we choose
$$f^\alpha_{m,\infty}(x,v) = \phi_m^\alpha(x) \psi_m(v)$$
for every $\alpha = 1, ..., N$
where
$\psi_m \in C_c^{m+1}(\mathbb{R}^3)$ 
is nonnegative and satisfies
$$\frac{\partial^m\psi_m}{\partial v_1^m}(v) \not\equiv 0,$$
and the collection of nonnegative spatial distributions 
$\phi_m^\alpha \in C_c^{m+1}(\mathbb{R}^3)$
satisfies the constraint
$$\sum_{\alpha = 1}^N q_\alpha \phi_m^\alpha(x) = \mu_m(x).$$
This implies that for all $\ell = 0, ..., m-1$,
\begin{align*}
\rho_{\ell, \infty}(v) & =  \sum_{\alpha = 1}^N q_\alpha F^{\alpha,\ell}_\infty(v) = \sum_{\alpha = 1}^N \sum_{|\beta| = \ell} \frac{{(-1)^\beta}}{\beta!}\! \int D_v^\beta x^\beta q_\alpha f^\alpha_{m, \infty}(x,v) \ dx\\
& =  \sum_{|\beta| = \ell} \frac{{(-1)^\beta}}{\beta!} D_v^\beta \psi_m(v) \left(\int x^\beta \sum_{\alpha = 1}^N  q_\alpha \phi_m^\alpha(x) \ dx \right)\\
& =  \sum_{|\beta| = \ell} \frac{{(-1)^\beta}}{\beta!} D_v^\beta \psi_m(v) \left(\int x^\beta \mu_m(x) dx \right) \equiv 0
\end{align*}
as the lower-order moments of $\mu_m(x)$ must vanish.
Additionally, for the final limiting density we find
\begin{align*}
\rho_{m, \infty}(v) & =  \sum_{|\beta| = m} \frac{{(-1)^\beta}}{\beta!} D_v^\beta \psi_m(v) \left(\int x^\beta \mu_m(x) dx \right)\\
& =  \frac{{(-1)^m}}{m!}\frac{\partial^m\psi_m}{\partial v_1^m}(v) \left(\int x_1^m \mu_m(x) dx \right)\\
& =  \frac{{(-1)^m}}{m!}\frac{\partial^m\psi_m}{\partial v_1^m}(v) \not\equiv 0.
\end{align*}
Finally, as we have constructed a family of limits $f^\alpha_{m,\infty} \in C_c^{m+1}(\mathbb{R}^6) \subset W^{2,\infty}(\mathbb{R}^6)$ for every $\alpha = 1, ..., N$ and $m \in \N_0$, an application of  \cite[Theorem 1.1(ii) and Remark 1.2(5)]{Flynn} (see also \cite[Theorem 1.1]{Schlue-Taylor}) implies that for each $m \in \N_0$ there exists a unique smooth solution of \eqref{VP}, in this case $ f_m^\alpha \in C^{m+1} \left((0,\infty) \times \mathbb{R}^6 \right)$ for every $\alpha = 1, ..., N$ that is associated to this limit $f^\alpha_{m,\infty}(x,v)$ via \eqref{modscattering} for $m=0$ and \eqref{scattering} for $m \geq 1$, respectively, and the proof is complete.
\end{proof}

\section{Derivatives of Translated Distributions}
\label{sec:bounds-derivs-g}
This section is dedicated to the proofs of Lemmas \ref{Dng0} and \ref{Dng}.
%
%\begin{lemma}
%\label{Dng}
%Let $n \in \N_0$ be given. 
%For $n=0$, assume \eqref{BC}, while for $n \geq 1$, assume \eqref{IH}.
%Then, we have
%%\begin{equation}
%%\label{DngDnE}
%%\mcG_v^{n+2}(t) \lesssim 1 + \int_1^t \biggl[ s^{n+3} \|D_x^{n+2}E(s)\|_{\infty} + s \Vert D_xE(s) \Vert_\infty \left( s  \mcG_{x,v}^{n+2}(s)  + \mcG_v^{n+2}(s) \right)\biggr ] \ ds,
%%\end{equation}
%\begin{equation}
%\label{DngDnE}
%\mcG_v^{n+2}(t) \lesssim 1 + \int_1^t s^{n+3} \|\nabla_x^{n+2}E(s)\|_{\infty} \ ds,
%\end{equation}
%%
%and, as a preliminary estimate,
%$$\Vert \nabla_x^{n+2} E(t) \Vert_\infty \lesssim t^{-4-n} \ln(t), \qquad \mcG_{x,v}^{n+2}(t) \lesssim 1, \qquad \mathrm{and} \qquad  \mcG_v^{n+2}(t) \lesssim \ln^2(t).$$
%\end{lemma}
Before proving these lemmas, we must define the quantity $\mcD_k(t)$ which includes a total of $k$ derivatives (both in $x$ and in $v$) of $g$ with a weight $t^{-j}$ depending on how many $v$ derivatives appear, namely
	\begin{align*}
	\mcD_k(t) :=1+ \sum_{j=0}^kt^{-j} \max_{\alpha = 1, ..., N} \sup_{\substack{|\beta_x|+|\beta_v|=k\\|\beta_v|=j}} \|D_v^{\beta_v} D_x^{\beta_x}g^\alpha(t)\|_\infty.
	\end{align*}
For brevity, we denote
	\[
	\mcD_k^j(t)
	:=
	\max_{\alpha = 1, ..., N} \sup_{\substack{|\beta_x|+|\beta_v|=k\\|\beta_v|=j}} \|D_v^{\beta_v} D_x^{\beta_x}g^\alpha(t)\|_\infty
	\]
for $j = 0, ..., k$ so that
	\[
	\mcD_k(t) =1+ \sum_{j=0}^kt^{-j}\mcD_k^j(t).
	\]

\begin{proof}[Proof of Lemmas \ref{Dng0} and \ref{Dng}]
%We prove the result for $n \geq 1$ and note that the decay estimates of field derivatives are actually one power stronger in the case $n=0$, so that any appearances of $s^{-1-n}$ terms below will be replaced by $s^{-2}$ whenever $n=0$. As these terms are integrable, the same techniques will also suffice in this case, as well.
As the proof of Lemma \ref{Dng0} follows similarly to that of Lemma \ref{Dng}, but in the case $n=0$, we combine their respective proofs here. 
The strategy of the proof proceeds as follows. To understand the asymptotic properties of $D_v^{\beta_v} D_x^{\beta_x} g^\alpha$ with $|\beta_x+\beta_v| = n+2$, we apply these derivatives to the Vlasov equation $\mcV_g g^\alpha = 0$. The idea is to get an expression that one can bound, and then integrate along characteristics in order to eliminate $\mcV_g$.  This will allow us to construct a bound on the sum of all $n+2$ order derivatives in $x$ and $v$ of $g^\alpha$ to show $\mcD_{n+2}(t)\lesssim 1$ following an application of Gronwall's inequality. 
The uniform bound on $\mcD_{n+2}(t)$ will then imply $\mcD_{n+2}^\ell(t) \lesssim t^\ell$ for every $\ell=0,\hdots,n+2$, which then refines the bound on $\infnorm{\nabla_x^{n+2}E(t)}$ stated in Lemma \ref{LDE}. The polynomial growth of each $\mcD_{n+2}^\ell(t)$ will then be iteratively improved by applying a Gronwall argument until each of these quantities is uniformly bounded for all $\ell\in\{1,\hdots,n\}$ and by $\ln(t)$ when $\ell=n+1$. In the final part of the proof, a logarithmic bound on $\mcD_{n+2}^{n+2}(t)$ will be established and the upper bound on $\mcD_{n+2}^{n+1}(t)$ will be improved to a constant, enabling us to  conclude  $\mcG_{x,v}^{n+2}(t)\lesssim 1$ and $\mcG_{v}^{n+2}(t)\lesssim \ln^2(t)$ for all $n\geq 0$ from \eqref{eq:G^k_v} and \eqref{eq:G^k_x,v}.
Throughout, we take $t \geq 1$ for simplicity, and it will always be specified whether $n=0$ or $n\geq 1$, as the base case (Lemma \ref{Dng0}) requires slightly different assumptions.

\vspace{0.1in}

\emph{\underline{Step 1:} Uniform bound on $\mcD_{n+2}(t)$}

 Taking derivatives in \eqref{VPg}, a simple computation leads us to a formula for the commutator, namely
	\begin{equation}
	\label{eq:k-derivs-g}
	\mcV_g\big(D_v^{\beta_v} D_x^{\beta_x} g^\alpha\big)
	=
	\sum_{j=0}^{n+1}\left(\!\begin{array}{c}n+2\\j\end{array}\!\right) \sum_{\substack{|\gamma_x+\gamma_v|=j \\ \gamma_x \preceq\beta_x, \gamma_v \preceq\beta_v}}(D_v^{\beta_v-\gamma_v} D_x^{\beta_x-\gamma_x}\mcV_g) \left( D_v^{\gamma_v} D_x^{\gamma_x}g^\alpha \right).
	\end{equation}
Here, the $\gamma_x$ and $\gamma_v$ derivatives represent those from the $\beta_x$ and $\beta_v$ derivative products which are not applied to the Vlasov operator, but instead act on $g^\alpha$.
Note that the term with all $n+2$ derivatives applied to $g^\alpha$ vanishes, as $D_v^{\gamma_v} D_x^{\gamma_x} \left( \mcV_g g^\alpha \right) = 0$ by the Vlasov equation.

We focus on the individual terms on the right hand side of \eqref{eq:k-derivs-g}. Recalling that the operator $\mcV_g$ is given by the expression 
	\[
	\mcV_g 
	=
	\partial_{t} + \frac{q_\alpha}{m_\alpha} E(t,x+vt) \cdot \left( - t \nabla_{x} + \nabla_{v} \right)
	\]
we find
	\begin{align*}
	D_v^{\beta_v-\gamma_v} D_x^{\beta_x-\gamma_x}\mcV_g
	=& \frac{q_\alpha}{m_\alpha}t^{|\beta_v-\gamma_v|}\left(D_x^{\beta_v+\beta_x-\gamma_v-\gamma_x}E\right)(t,x+vt)\cdot \left( - t\nabla_x + \nabla_v \right)
	\end{align*}	
so that
	\begin{align*}
	(D_v^{\beta_v-\gamma_v} D_x^{\beta_x-\gamma_x}\mcV_g) \left(D_v^{\gamma_v} D_x^{\gamma_x}g^\alpha \right)
	=&
	\frac{q_\alpha}{m_\alpha}t^{|\beta_v-\gamma_v|}\left(D_x^{\beta_v+\beta_x-\gamma_v-\gamma_x}E\right)(t,x+vt)\cdot \left( -t\nabla_x + \nabla_v \right) \left(D_v^{\gamma_v} D_x^{\gamma_x} g^\alpha \right).
	\end{align*}
Inserting this expression into \eqref{eq:k-derivs-g} we finally arrive at
$$ \mcV_g(D_v^{\beta_v} D_x^{\beta_x} g^\alpha)
	=
	\frac{q_\alpha}{m_\alpha}\sum_{j=0}^{n+1}\left(\hspace{-0.15cm}\begin{array}{c}n+2\\j\end{array}\hspace{-0.1cm}\right)\hspace{-0.2cm}
	\sum_{\substack{|\gamma_x+\gamma_v|=j \\ \gamma_x \preceq\beta_x, \gamma_v \preceq\beta_v}}\hspace{-0.4cm}
	t^{|\beta_v-\gamma_v|}\left(D_x^{\beta_v+\beta_x-\gamma_v-\gamma_x}E\right)(t,x+vt)\cdot \left( -t\nabla_x + \nabla_v \right) \left(D_v^{\gamma_v} D_x^{\gamma_x}g^\alpha \right).
$$
We then integrate this equation along characteristics, thus eliminating $\mcV_g$ from the left side, and take the supremum over $(x,v) \in \bfR^6$ to find
	\begin{align*}
		\|D_v^{\beta_v} D_x^{\beta_x} g^\alpha(t)\|_\infty
		 \lesssim
		1 + \int_1^t\sum_{j=0}^{n+1}\sum_{\substack{|\gamma_x+\gamma_v| = j \\ \gamma_x \preceq\beta_x, \gamma_v \preceq\beta_v}}
		& \bigg (s^{1+|\beta_v-\gamma_v|}\| D_x^{\beta_v+\beta_x-\gamma_v-\gamma_x}E(s)\|_\infty  \| \nabla_xD_v^{\gamma_v} D_x^{\gamma_x}g^\alpha(s)\|_\infty \\
		&  + s^{|\beta_v-\gamma_v|} \| D_x^{\beta_v+\beta_x-\gamma_v-\gamma_x}E(s)\|_\infty \| \nabla_vD_v^{\gamma_v} D_x^{\gamma_x}g^\alpha(s) \|_\infty \bigg ) \ ds,
	\end{align*}
	which, as $|\beta_x + \beta_v | = n+2$, implies
\begin{equation}
	\label{DngOrig}	
	\|D_v^{\beta_v} D_x^{\beta_x} g^\alpha(t)\|_\infty
	 \lesssim
	1+ \int_1^t\sum_{j=0}^{n+1}\sum_{\substack{|\gamma_x+\gamma_v| = j \\ \gamma_x \preceq\beta_x, \gamma_v \preceq\beta_v}}
	s^{|\beta_v-\gamma_v|} \|\nabla_x^{n+2-j}E(s)\|_{\infty} \left( s\mcD_{j+1}^{|\gamma_v|}(s) + \mcD_{j+1}^{|\gamma_v|+1}(s) \right)ds.
\end{equation}

Next, we separate the terms within the sum over $j$ on the right side of this inequality into $I + II + III + IV$, as follows. The $j=0$ term satisfies
$$I \lesssim s^{|\beta_v|} \|\nabla_x^{n+2}E(s)\|_{\infty} \left(s \mcG_{x,v}^1(s) + \mcG_{v}^1(s)\right),$$
the $j=1$ term is
$$II = \sum_{\substack{|\gamma_x+\gamma_v| = 1 \\ \gamma_x \preceq\beta_x, \gamma_v \preceq\beta_v}} 
s^{|\beta_v - \gamma_v|} \|\nabla_x^{n+1}E(s)\|_{\infty} \left( s\mcD_{2}^{|\gamma_v|}(s)
	+\mcD_{2}^{|\gamma_v|+1}(s) \right),$$
the remaining middle terms, for $n\geq 2$, are grouped together into
$$III = \sum_{j=2}^{n}\sum_{\substack{|\gamma_x+\gamma_v| = j \\ \gamma_x \preceq\beta_x, \gamma_v \preceq\beta_v}}
	s^{|\beta_v-\gamma_v|} \|\nabla_x^{n+2-j}E(s)\|_{\infty} \left( s\mcD_{j+1}^{|\gamma_v|}(s)
	+\mcD_{j+1}^{|\gamma_v|+1}(s) \right),$$
and finally the $j=n+1$ term is
$$IV = \sum_{\substack{|\gamma_x+\gamma_v| = n+1 \\ \gamma_x \preceq\beta_x, \gamma_v \preceq\beta_v}}
	s^{|\beta_v-\gamma_v|} \|\nabla_x E(s)\|_{\infty} \left( s\mcD_{n+2}^{|\gamma_v|}(s)
	+ \mcD_{n+2}^{|\gamma_v|+1}(s) \right).$$
For $n=0$, we note that only the terms $I$ and $IV$ (which is equivalent to $II$ in this case) appear, while for $n=1$ only the terms $I$, $II$, and $IV$ appear.

Now, applying the previously established decay estimates stemming from the vanishing limit(s) of the charge density in each lemma, we bound the terms in this sum.
Using \eqref{BC} for $n=0$ and \eqref{IH} with $k=1$ for $n \geq 1$ to bound the $\mcG_{x,v}^1(s)$ and $\mcG_{v}^1(s)$ terms, we find
\begin{equation}
\label{DngI}
I \lesssim s^{1 + |\beta_v|} \|\nabla_x^{n+2}E(s)\|_{\infty}.
\end{equation}

For $n\geq 1$ we estimate $II$ and $III$ using \eqref{IH} to bound derivatives of the field and distribution function.
In particular, for $k \leq n+1$ we use \eqref{IH} to find $\mcD_k^j(t) \lesssim \mcG^k_{x,v}(t) \lesssim 1$ for $j < k$  and  $\mcD_k^j(t) \lesssim \mcG^k_{v}(t) \lesssim 1$ for $j = k$.
This provides a uniform bound on every derivative of $g^\alpha$ appearing within these terms.
The field derivative terms are then estimated using \eqref{IH} in $II$ and $III$, with $k= n+2 - j$ in the latter.
Hence, we find for $s \geq 1$
$$ II \lesssim  \sum_{\substack{|\gamma_x+\gamma_v| = 1 \\ \gamma_x \preceq\beta_x, \gamma_v \preceq\beta_v}} 
s^{|\beta_v - \gamma_v|} s^{-n-4} \ln(s) s$$
and
\begin{align*}
III & \lesssim  \sum_{j=2}^{n}\sum_{\substack{|\gamma_x+\gamma_v| = j \\ \gamma_x \preceq\beta_x, \gamma_v \preceq\beta_v}}
	 s^{|\beta_v-\gamma_v|} s^{-3-n}  \left(s \mcG_{x,v}^{j+1}(s) + \mcG_{v}^{j+1}(s) \right)
    \lesssim \sum_{j=2}^{n}\sum_{\substack{|\gamma_x+\gamma_v| = j \\ \gamma_x \preceq\beta_x, \gamma_v \preceq\beta_v}} s^{-2 + |\beta_v - \gamma_v| -n}.
\end{align*}
Then, as $|\beta_x + \beta_v | = n+2$ and $\gamma_x \preceq \beta_x$, we find 
$$|\beta_v - \gamma_v| = n+2 - |\gamma_v + \gamma_x| - |\beta_x - \gamma_x| \leq   n+2 - |\gamma_v + \gamma_x|.$$
Therefore, we find $|\beta_v-\gamma_v| \leq n+1$ within the estimate of $II$ and $|\beta_v-\gamma_v| \leq n+2 - j$ within the estimate of $III$, which yields
\begin{equation}
\label{DngII}
II  \lesssim s^{-2} \ln(s)
\end{equation}
and
\begin{equation}
\label{DngIII}
III  \lesssim \sum_{j=2}^{n} s^{-j} \lesssim s^{-2},
\end{equation}
respectively.
%
%Further, we note that when $n=0$, we have $II = 0$.
Finally, turning to $IV$, we have
\begin{align}
\label{DngIV}
	IV \lesssim \begin{cases} \displaystyle
		\sum_{\substack{|\gamma_x+\gamma_v| = n+1 \\ \gamma_x \preceq\beta_x, \gamma_v \preceq\beta_v}}	s^{-3+|\beta_v-\gamma_v| -n} \left( s\mcD_{n+2}^{|\gamma_v|}(s)+ \mcD_{n+2}^{|\gamma_v|+1}(s) \right), & n\geq 1 \\
		\displaystyle
		\sum_{\substack{|\gamma_x+\gamma_v| = 1 \\ \gamma_x \preceq\beta_x, \gamma_v \preceq\beta_v}} s^{-4+|\beta_v-\gamma_v|}\ln(s) \left( s\mcD_{2}^{|\gamma_v|}(s)	+ \mcD_{2}^{|\gamma_v|+1}(s) \right), & n=0
	\end{cases}
\end{align}
where  for $n\geq1$ we use \eqref{IH} with $k=1$,
%\begin{align*}
%IV & \lesssim \sum_{\substack{|\gamma_x+\gamma_v| = n+1 \\ \gamma_x \preceq\beta_x, \gamma_v \preceq\beta_v}}
%	s^{-3+|\beta_v-\gamma_v| -n} \left( s\mcD_{n+2}^{|\gamma_v|}(s)
%	+ \mcD_{n+2}^{|\gamma_v|+1}(s) \right).
%\end{align*}
and for $n=0$ we have an improved decay rate due to \eqref{BC}. %of \eqref{IH}. % and this produces
%\begin{align*}
%IV & \lesssim \sum_{\substack{|\gamma_x+\gamma_v| = 1 \\ \gamma_x \preceq\beta_x, \gamma_v \preceq\beta_v}}
%	s^{-4+|\beta_v-\gamma_v|}\ln(s) \left( s\mcD_{2}^{|\gamma_v|}(s)
%	+ \mcD_{2}^{|\gamma_v|+1}(s) \right).
%\end{align*}
%
% Inserting the estimates \eqref{DngI}, \eqref{DngII},  \eqref{DngIII} and \eqref{DngIV} into the inequality \eqref{DngOrig} 
Combining the estimates \labelcref{DngI,DngII,DngIII,DngIV} and inserting them into the integral within \eqref{DngOrig}, we merely integrate $II$ and $III$, as both are integrable on $(1,\infty)$, to find
% 	\begin{align*}
% 		\|D_v^{\beta_v} D_x^{\beta_x} g^\alpha(t)\|_\infty
% 		\lesssim	1 & + \int_1^t  \biggl( s^{1 + |\beta_v|} \|\nabla_x^{n+2}E(s)\|_{\infty} + s^{-2}\ln(s) \\ & \qquad + \sum_{\substack{|\gamma_x+\gamma_v| = n+1 \\ \gamma_x \preceq\beta_x, \gamma_v \preceq\beta_v}}
% 		s^{-3+|\beta_v-\gamma_v| -n} \left( s\mcD_{n+2}^{|\gamma_v|}(s)
% 		+ \mcD_{n+2}^{|\gamma_v|+1}(s) \right)\biggr ) \ ds
% 	\end{align*}	
% or, upon integrating the middle term,
\thickmuskip=2.5mu \medmuskip=2mu \thinmuskip=1.5mu
\begin{equation}
\label{DngGeneral}
	\begin{aligned}
		\|D_v^{\beta_v} D_x^{\beta_x} g^\alpha(t)\|_\infty 
		& \lesssim 1 + \int_1^t \biggl( s^{1 + |\beta_v|} \|\nabla_x^{n+2}E(s)\|_{\infty} + \hspace{-4mm}\sum_{\substack{|\gamma_x+\gamma_v| = n+1 \\ \gamma_x \preceq\beta_x, \gamma_v \preceq\beta_v}} \!\!
		s^{-3+|\beta_v-\gamma_v| -n} \left( s\mcD_{n+2}^{|\gamma_v|}(s)	+ \mcD_{n+2}^{|\gamma_v|+1}(s) \right)\biggr )  ds
	\end{aligned}
\end{equation}
for $n\geq 1$, while for $n = 0$ we have
\begin{equation}
\label{DngGeneraln0}
	\begin{aligned}
		\|D_v^{\beta_v} D_x^{\beta_x} g^\alpha(t)\|_\infty 
		& \lesssim 1 + \int_1^t \biggl( s^{1 + |\beta_v|} \|\nabla_x^{2}E(s)\|_{\infty} + \hspace{-4mm}\sum_{\substack{|\gamma_x+\gamma_v| = 1 \\ \gamma_x \preceq\beta_x, \gamma_v \preceq\beta_v}} \!\!
		s^{-4+|\beta_v-\gamma_v|}\ln(s) \left( s\mcD_{2}^{|\gamma_v|}(s) + \mcD_{2}^{|\gamma_v|+1}(s) \right)\biggr ) ds.
	\end{aligned}
\end{equation}
\thickmuskip=5mu \medmuskip=4mu \thinmuskip=3mu
%Returning to the previous bound, as $\|\nabla^k_x g^\alpha(s) \|_\infty + \|\nabla^k_v g^\alpha(s) \|_\infty\lesssim 1$ for $k \leq n+1$, we use Lemma \ref{LDE} with $k = n+2$ to bound the last terms on the right side, yielding
%\begin{align*}
%	\|\nabla_v^{n+2} g^\alpha(t)\|_\infty
%	& \lesssim	1 + \int_1^t \biggl( s^{-1} \left(1 + \ln^* \left( \Vert \nabla^{n+2}_v g(t) \Vert_\infty \right) \right) + s^{-2 -n} \left( s\mcD_{n+2}^{n+1}(s)
%	+ \mcD_{n+2}^{n+2}(s) \right)\biggr ) \ ds\\
%\end{align*}

Next, we will collect all order $n+2$ derivatives and assemble them with decaying powers of $t$ to construct a Gronwall argument for $\mcD_{n+2}(t)$.
With \eqref{DngGeneral} established, we take the supremum over $\alpha = 1, ..., N$ and $|\beta_x + \beta_v| = n+2$ with $|\beta_v| = j$, multiply by $t^{-j}$, and use $s \leq t$ to find
$$t^{-j}\mcD_{n+2}^j(t)
 \lesssim1 + \int_1^t \biggl( s\|\nabla_x^{n+2}E(s)\|_{\infty}
	+ \sum_{\ell = 0}^{\min\{j,n+1\}}
	s^{-3-\ell -n} \left( s\mcD_{n+2}^{\ell}(s)
	+ \mcD_{n+2}^{\ell+1}(s) \right)\biggr ) \ ds$$
for all $j = 0, ..., n+2$.
Here, we have denoted $\ell = |\gamma_v|$ and further used $|\gamma_v| \leq |\beta_v| = j$ and $|\gamma_v| \leq |\gamma_x + \gamma_v| = n+1$ to rewrite the sum above.
As the terms inside the sum are nonnegative, we further deduce
\begin{equation}
\label{Dnp2}
t^{-j}\mcD_{n+2}^j(t)
 \lesssim1 + \int_1^t \biggl( s\|\nabla_x^{n+2}E(s)\|_{\infty}
	+ \sum_{\ell = 0}^{n+1}
	s^{-3-\ell -n} \left( s\mcD_{n+2}^{\ell}(s)
	+ \mcD_{n+2}^{\ell+1}(s) \right)\biggr ) \ ds
\end{equation}
for all $j = 0, ..., n+2$.
Focusing on the first term within the integrand, we note that $\mcG_v^{n+1}(t) \lesssim 1$ for $n=0$ due to \eqref{BC} and for $n \geq 1$ due to \eqref{IH}. Hence, we use Lemma \ref{LDE} with $k = n+2$, separate the logarithmic product, and use the definition of $\mcD^{n+2}_{n+2}(t)$ so that
\begin{align*}
	\int_1^t s\|\nabla_x^{n+2}E(s)\|_{\infty} ds & \lesssim 1+  \int_1^t  s^{-3-n} \ln^* \left(\max_{\alpha = 1, ..., N} \|\nabla_v^{n+2}g^\alpha(s)\|_{\infty} \right) ds\\
	&\lesssim 1 +  \int_1^t \left( s^{-3-n} \ln^*\left(s^{2+n} \right) + s^{-3-n} \ln^* \left(s^{-2-n} \max_{\alpha = 1, ..., N}\|\nabla_v^{n+2}g^\alpha(s)\|_{\infty} \right) \right) ds \\
	&\lesssim 1+ \int_1^t s^{-3-n} \ln^* \left(s^{-(n+2)} \max_{\alpha = 1, ..., N} \|\nabla_v^{n+2}g^\alpha(s)\|_{\infty} \right) ds \\
	& \lesssim 1 +  \int_1^t s^{-3-n} \ln^* \biggl(s^{-(n+2)} \mcD^{n+2}_{n+2}(s) \biggr ) ds\\
	& \lesssim 1 +  \int_1^t s^{-3-n} \ln^* \biggl(\mcD_{n+2}(s) \biggr ) ds,
\end{align*}
as $\ln^*(x)$ is an increasing function.
Thus, beginning with \eqref{Dnp2}, summing over $j=0,...,n+2$, noting that the right side is independent of $j$, and recalling the definition of $\mcD_{n+2}(t)$, we find
\begin{align*}
	\mcD_{n+2}(t) 	& \lesssim	1 + \int_1^t \biggl( s^{-3-n} \ln^* \biggl(\mcD_{n+2}(s) \biggr ) + s^{-2-n} \sum_{\ell=0}^{n+1}  \left( s^{-\ell}\mcD_{n+2}^{\ell}(s)
	+ s^{-(\ell+1)}\mcD_{n+2}^{\ell+1}(s) \right)\biggr ) \ ds\\
	& \lesssim 1 +  \int_1^t \biggl( s^{-3-n} \ln^* \biggl(\mcD_{n+2}(s) \biggr ) + s^{-2-n}\mcD_{n+2}(s)\biggr ) ds \\
	& \lesssim 1 + \int_1^t s^{-2-n}\ \mcD_{n+2}(s) \ ds.
\end{align*}
We note that this same estimate holds for $n = 0$ as $s^{-4} \ln(s) \lesssim s^{-3}$ within equation \eqref{DngGeneraln0} compared to the estimate of \eqref{DngGeneral}.
Finally, Gronwall's inequality allows us to conclude
$$\mcD_{n+2}(t) \lesssim \exp \left( \int_1^t s^{-2-n} ds \right) \lesssim 1$$
as $n \geq 0$.	

\vspace{0.1in}

\emph{\underline{Step 2:}  Refined estimate of $\infnorm{\nabla_x^{n+2}E(t)}$}

From the definition of $\mcD_{n+2}(t)$, the above estimate implies
\begin{equation}
\label{Dk}
\mcD_{n+2}^\ell(t) \lesssim t^\ell
\end{equation}
for every $\ell = 0, ..., n+2$.
In particular, we find
$$\Vert \nabla_x^{n+2} g^\alpha(t) \Vert_\infty \lesssim \mcD_{n+2}^0(t)  \lesssim 1$$
and
$$\Vert \nabla_v^{n+2} g^\alpha(t) \Vert_\infty \lesssim \mcD_{n+2}^{n+2}(t)  \lesssim t^{n+2},$$
for all $\alpha = 1, ..., N$
which, due to Lemma \ref{LDE} with $k = n+2$, further yields
\begin{equation}
\label{Dnp2E_prelim}
\|\nabla_x^{n+2}E(s)\|_{\infty} \lesssim t^{-4-n} \ln(t)
\end{equation}
for $n \geq 0$.

\vspace{0.1in}

\emph{\underline{Step 3:}  Refined estimates of  $\mcD_{n+2}^\ell(t)$ for $\ell = 1, ..., n+2$}

This field derivative estimate improves the general estimates \eqref{DngGeneral} and \eqref{DngGeneraln0}, yielding
\thickmuskip=3mu \medmuskip=2.75mu \thinmuskip=2mu
	\begin{equation}
	\label{DngGeneral2}
	\begin{aligned}
	\|D_v^{\beta_v} D_x^{\beta_x} g^\alpha(t)\|_\infty
	& \lesssim	1 + \int_1^t \biggl( s^{-3 + |\beta_v| -n} \ln(s)%\\
	% & \quad \ \  
	+ \hspace{-4mm}\sum_{\substack{|\gamma_x+\gamma_v| = n+1 \\ \gamma_x \preceq\beta_x, \gamma_v \preceq\beta_v}}
	\hspace{-0.4cm} s^{-3+|\beta_v-\gamma_v| -n} \left( s\mcD_{n+2}^{|\gamma_v|}(s)
	+ \mcD_{n+2}^{|\gamma_v|+1}(s) \right)\biggr ) ds
	\end{aligned}
	\end{equation}
for $n\geq1$, and for $n=0$
	\begin{equation}
	\label{DngGeneral2n0}
	\begin{aligned}
	\|D_v^{\beta_v} D_x^{\beta_x} g^\alpha(t)\|_\infty
	& \lesssim	1 + \int_1^t \biggl( s^{-3 + |\beta_v|} \ln(s)%\\
	% & \quad \ \ 
	+ \hspace{-4mm} \sum_{\substack{|\gamma_x+\gamma_v| = 1 \\ \gamma_x \preceq\beta_x, \gamma_v \preceq\beta_v}}
	\hspace{-0.3cm} s^{-4+|\beta_v-\gamma_v|} \ln(s) \left( s\mcD_{2}^{|\gamma_v|}(s)
	+ \mcD_{2}^{|\gamma_v|+1}(s) \right)\biggr ) ds.
	\end{aligned}
\end{equation}	
%\thickmuskip=5mu \medmuskip=4mu \thinmuskip=3mu

We now set out to improve \eqref{Dk}, the polynomial growth estimates of $\mcD_{n+2}^\ell(t)$ for $\ell = 1, ..., n+2$. Note that the bound on $\mcD_{n+2}^0(t)$ is already uniform in $t$. We start with the case $n\geq1$. First take $\ell=|\beta_v| = 1$, which implies $|\beta_x| = n+1$ and $|\gamma_v| \leq 1$, and use \eqref{Dk} so that taking the supremeum over all such derivatives in \eqref{DngGeneral2} yields
	\begin{align*}
	\mcD_{n+2}^1(t)
	& \lesssim	1 + \int_1^t \biggl( s^{-2 -n} \ln(s) + s^{-1-n} \mcD_{n+2}^0(s) + s^{-2-n} \mcD_{n+2}^1(s) + s^{-3-n} \mcD_{n+2}^2(s)  \biggr ) \ ds\\
	& \lesssim	1 + \int_1^t \biggl(s^{-1-n} + s^{-2-n} \mcD_{n+2}^1(s)  \biggr ) \ ds.
	\end{align*}	
Upon applying Gronwall and using $n \geq 1$, this gives
$$ \mcD_{n+2}^1(t)  \lesssim \left(1 + \int_1^t s^{-1-n} \ ds \right) \exp \left( \int_1^t s^{-2-n} \ ds \right) \lesssim 1 . $$

Next, with $ \mcD_{n+2}^\ell(t) \lesssim 1$ established for $\ell =1$, we iterate this process over $|\beta_v| = \ell$ with $\ell = 2, ..., n+1$ while assuming that $\mcD_{n+2}^j(t) \lesssim 1$ for all $j = 1, ..., \ell-1$ and again taking the supremum over derivatives within \eqref{DngGeneral2}.
Because $|\beta_v| = \ell$, we find $|\beta_x| = n+2-\ell$ and $|\gamma_v| \leq \ell$, which then implies
%$$ n+1 - \ell \leq n + 1 - |\gamma_v| = |\gamma_x| \leq |\beta_x| \leq n+2-\ell.$$
$$n + 1 - |\gamma_v| = |\gamma_x| \leq |\beta_x| = n+2-\ell$$
within the sum in \eqref{DngGeneral2}.
Rearranging this inequality yields a lower bound on $|\gamma_v|$, namely $|\gamma_v| \geq \ell - 1$.
Combining with the previous upper bound on $|\gamma_v|$, it follows that $\ell - 1 \leq |\gamma_v| \leq \ell$ and only two terms appear within the sum on the right side of \eqref{DngGeneral2}.
Thus, taking the supremum over all derivatives satisfying $|\beta_v| = \ell$ within \eqref{DngGeneral2} and using $\mcD_{n+2}^{\ell-1}(t) \lesssim 1$, we find
\begin{align*}
	\mcD_{n+2}^\ell(t)
	& \lesssim	1 + \int_1^t \biggl[ s^{-3 + \ell -n} \ln(s) + \sum_{j=\ell-1}^\ell s^{-3+\ell-j-n} \left( s\mcD_{n+2}^j(s) + \mcD_{n+2}^{j+1}(s) \right) \biggr ] \ ds\\
	& \lesssim	1 + \int_1^t \biggl[ s^{-2}\ln(s) + s^{-1-n} \mcD_{n+2}^{\ell -1}(s) + s^{-2-n} \mcD_{n+2}^{\ell}(s) + s^{-3-n} \mcD_{n+2}^{\ell+1}(s) \biggr ] \ ds\\
	& \lesssim	1 + \int_1^t \biggl[ s^{-1-n} + s^{-2-n} \mcD_{n+2}^\ell(s) + s^{-3-n} \mcD_{n+2}^{\ell +1}(s) \biggr ] \ ds
\end{align*}	
Because $n \geq 1$, the first term is integrable, and applying Gronwall yields
\begin{equation}
\label{Dnp2ell}
\mcD_{n+2}^\ell(t)  \lesssim \left(1 + \int_1^t  s^{-3-n} \mcD_{n+2}^{\ell +1}(s) \ ds \right) \exp \left( \int_1^t s^{-2-n} \ ds \right) \lesssim 1 + \int_1^t  s^{-3-n} \mcD_{n+2}^{\ell +1}(s) \ ds.
\end{equation}
Using \eqref{Dk} with $k = \ell+1$, we find 
$$\mcD_{n+2}^\ell(t)  \lesssim 1 + \int_1^t  s^{-2-n+\ell} \ ds$$
and conclude both
$\mcD_{n+2}^\ell(t) \lesssim 1$
for $\ell = 2, ..., n$ and
$\mcD_{n+2}^{n+1}(t) \lesssim \ln(t)$.
The latter estimate will be improved to a constant bound at the conclusion of the proof.
	
To improve the growth estimates for the case $n = 0$, we first note that \eqref{Dk} with $k=0$ yields $\mcD_{2}^0(t) \lesssim 1$ and with $k=2$ gives $\mcD_{2}^2(t) \lesssim t^2$. Hence, repeating the above steps using \eqref{DngGeneral2n0} instead of \eqref{DngGeneral2}, we arrive at
	\begin{align*}
	\mcD_{2}^1(t)
	& \lesssim	1 + \int_1^t \biggl( s^{-2} \ln(s) + s^{-2} \ln(s) \mcD_{2}^0(s) + s^{-3} \ln(s) \mcD_{2}^1(s) + s^{-4}\ln(s) \mcD_{2}^2(s)  \biggr ) \ ds\\
	& \lesssim	1 + \int_1^t \biggl(s^{-2}\ln(s) + s^{-3}\ln(s) \mcD_{2}^1(s)  \biggr ) \ ds.
	\end{align*}	
Upon applying Gronwall, this gives
$$ \mcD_{2}^1(t)  \lesssim \left(1 + \int_1^t s^{-2}\ln(s) \ ds \right) \exp \left( \int_1^t s^{-3}\ln(s) \ ds \right) \lesssim 1.$$
Hence, combining this bound for $\mcD_{2}^0(t)$ and $\mcD_{2}^1(t)$ results in
$\mcG_{x,v}^{2}(t) \lesssim 1.$

It remains to improve the estimate of $\mcD_{n+2}^{n+2}(t)$ and $\mcD_{2}^{2}(t)$ using another Gronwall argument, as follows. We start with $n\geq1$. We repeat the steps that led to \eqref{DngGeneral}, but without upper bounding the derivatives of $g^\alpha$ by the $\mcD_{n+2}^j(t)$ terms in $IV$, and consider the case of $|\beta_x| = 0$, so that $|\beta_v| = n+2$.
Then, $|\gamma_x| = 0$ and $|\gamma_v| = n+1$ necessarily, which gives
	\begin{equation}
	\label{Dvnp2g}
	\begin{aligned}
	\|\nabla_v^{n+2} g^\alpha(t)\|_\infty
	 & \lesssim 1 + \int_1^t \biggl[ s^{n+3} \|\nabla_x^{n+2}E(s)\|_{\infty}\\
	 & \hspace{2cm} + s\Vert \nabla_x E(s) \Vert_\infty \Big( s\Vert \nabla_x \nabla_v^{n+1} g^\alpha(s) \Vert_\infty + \Vert \nabla_v^{n+2} g^\alpha(s) \Vert_\infty \Big)\biggr ] \ ds.
	\end{aligned}
	\end{equation}
%which, upon taking supremums, yields \eqref{DngDnE}.
%$$\mcG_v^{n+2}(t) \lesssim 1 + \int_1^t \biggl( s^{n+3} \|D_x^{n+2}E(s)\|_{\infty} + s^{-2 -n} \left( s\Vert \nabla_x \nabla_v^{n+1} g^\alpha(s) \Vert_\infty + \mcG_v^{n+2}(s) \right)\biggr ) \ ds.$$
%
Using \eqref{IH} with $k=1$ and $\mcD_{n+2}^{n+1}(t) \lesssim \ln(t)$ within this estimate gives 
$$\mcG_v^{n+2}(t) \lesssim 1 + \int_1^t \biggl[ s^{n+3} \Vert \nabla_x^{n+2} E(s) \Vert_\infty + s^{-1-n}\ln(s)  + s^{-2-n}\mcG_v^{n+2}(s) \biggr ] \ ds.$$
As the middle term is integrable for $n \geq 1$, we apply Gronwall's inequality to find
$$\mcG_v^{n+2}(t)  \lesssim \left(1 + \int_1^t s^{n+3} \Vert \nabla_x^{n+2} E(s) \Vert_\infty \ ds \right) \exp \left( \int_1^t s^{-2-n} \ ds \right)  \lesssim 1 + \int_1^t s^{n+3} \Vert \nabla_x^{n+2} E(s) \Vert_\infty \ ds.$$
For $n=0$ we use \eqref{BC} instead of \eqref{IH} within \eqref{Dvnp2g}, as well as $\mcG_{x,v}^{2}(t) \lesssim 1$, which yields
$$\mcG_v^{2}(t) \lesssim 1 + \int_1^t \biggl[ s^{3} \Vert \nabla_x^{2} E(s) \Vert_\infty + s^{-2}\ln(s)  + s^{-3}\ln(s)\mcG_v^{2}(s) \biggr ] \ ds.$$
Again, the middle term is integrable, and an application of Gronwall gives
$$\mcG_v^{2}(t)  \lesssim 1 + \int_1^t s^{3} \Vert \nabla_x^{2} E(s) \Vert_\infty \ ds.$$
Thus, we have obtained \eqref{DngDnE0} in the $n=0$ case and \eqref{DngDnE} for $n \geq 1$.
Using \eqref{Dnp2E_prelim} in \eqref{DngDnE0} and \eqref{DngDnE} results in
$$\mcG_v^{n+2}(t) \lesssim 1 + \int_1^t s^{-1}\ln(s) \ ds  \lesssim \ln^2(t)$$
for all $n\geq0$.
Finally, using this improved bound on $\mcG_v^{n+2}(t)$, we have $\mcD_{n+2}^{n+2}(t) \lesssim \mcG_v^{n+2}(t) \lesssim \ln^2(t)$. Inserting this bound within \eqref{Dnp2ell} with $\ell = n+1$, we find $\mcD_{n+2}^{n+1}(t) \lesssim 1$, and hence
$\mcD_{n+2}^\ell(t) \lesssim 1$ for $\ell = 1, ..., n+1$.
Therefore,
$\mcG_{x,v}^{n+2}(t) \lesssim 1$ for $n\geq1,$
and the proof is complete.
\end{proof}

%\section{Ackowledgements}
%The author would like to thank the anonymous reviewers for their valuable comments and suggestions to improve the paper.

\end{document}